% 23/08/2020
\documentclass[11pt]{article}
\textwidth6.0in
\setlength{\oddsidemargin}{-0.3 cm} \setlength{\evensidemargin}{0 cm}
\setlength{\textwidth}{16.5 cm} \setlength{\topmargin}{0 cm}
\setlength{\headsep}{0 cm} \setlength{\textheight}{22 cm}

\usepackage{amsmath}
\usepackage[ruled,vlined]{algorithm2e}
\usepackage{graphicx}
\usepackage{enumerate}
\usepackage{url} % not crucial - just used below for the URL 
%\usepackage{apacite}
%\nocite{key}
%\usepackage{xr} %for referring to supplementary material
%\usepackage{cleveref} %for referring to supplementary material
%\externaldocument[supp-]{suppRestGARCH}
% To add an 'S' prefix to a reference
%\newcommand*\sref[1]{%
%    S\ref{#1}}
% 
%% For 'Supplementary Figure S1'
%\newcommand*\sfref[1]{%
%    Supplementary Figure S\ref{#1}}
% 
%% For 'Supplementary Table S1'
%\newcommand*\stref[1]{%
%    Supplementary Table S\ref{#1}}
% 
%% For 'Supplementary Materials S1'
%\newcommand*\smref[1]{%
%    Supplementary Materials S\ref{#1}}
    
%\renewcommand{\theequation}{\thesection.\arabic{equation}}
\usepackage{textcomp}
\usepackage{mathtools}
\usepackage{amssymb}
\usepackage{amsthm}
\usepackage{setspace}
\usepackage{caption}
\usepackage{mleftright}
\usepackage{multirow}
\usepackage[utf8]{inputenc}
\usepackage[english]{babel}
\usepackage{filecontents}
\usepackage{sectsty}
\usepackage{titletoc}
\usepackage{xcolor}
\usepackage[toc,page]{appendix}
\graphicspath{ {figures/} }
\usepackage{array}
\usepackage{float}
\usepackage{scalerel,stackengine}

\DeclarePairedDelimiter\floor{\lfloor}{\rfloor}
\newtheorem{theorem}{Theorem}[section]
\newtheorem{proposition}{Proposition}[section]
\newtheorem{corollary}{Corollary}[theorem]
\newtheorem{lemma}[theorem]{Lemma}

\newcommand{\mbf}[1]{\mbox{\boldmath $#1$}}
\newcommand{\bth}{{\mbf \theta}}
\newcommand{\bUpsilon}{{\mbf \Upsilon}}
\newcommand{\bTH}{{\mbf \Theta}}

\newcommand{\bt}{\theta}

\newcommand{\benr}{\begin{eqnarray}}
\newcommand{\eenr}{\end{eqnarray}}
\newcommand{\ben}{\begin{equation}}
\newcommand{\een}{\end{equation}}
\newcommand{\benrr}{\begin{eqnarray*}}
\newcommand{\eenrr}{\end{eqnarray*}}

\numberwithin{equation}{section}
\numberwithin{theorem}{section}

\def\d{\dot}
\def\del{\delta}
\def\ep{\epsilon}
\def\g{\gamma}

\def\h{\hat}
\def\iny{\infty}

\def\nn{\nonumber}

\def\r{\ref}

\def\R{{\mathbb R}}

\def\ZZ{{\mathbb Z}}
\def\NN{{\mathbb N}}

\def\s{\sum_{t=1}^n}

\def\ti{\tilde}

\def\vp{\varphi}

\def\bth{\mbox{\boldmath$\theta$}}

\def\bTheta{\mbox{\boldmath$\Theta$}}
\def\bDelta{\mbox{\boldmath$\Delta$}}
\def\bt{\mbox{\boldmath$t$}}
\def\bs{\mbox{\boldmath$s$}}
\def\bb{\mbox{\boldmath$b$}}

\def\w{{\mbf w}}
\def\M{{\mbf M}}

\def\Q{{\mbf Q}}

\def\T{{\mbf T}}

\def\A{{\mbf A}}

\def\D{{\mbf D}}
\def\I{{\mbf I}}
\def\J{{\mbf J}}
\def\K{{\mbf K}}
\def\0{{\mbf 0}}

\def\N{{\mbf N}}

\def\M{{\mbf M}}

%\pdfminorversion=4
% NOTE: To produce blinded version, replace "0" with "1" below.
\newcommand{\blind}{1}

% DON'T change margins - should be 1 inch all around.
%\addtolength{\oddsidemargin}{-.5in}%
%\addtolength{\evensidemargin}{-.5in}%
%\addtolength{\textwidth}{1in}%
%\addtolength{\textheight}{-.3in}%
%\addtolength{\topmargin}{-.8in}%

\begin{document}

\def\spacingset#1{\renewcommand{\baselinestretch}%
{#1}\small\normalsize} \spacingset{1.5}

%%%%%%%%%%%%%%%%%%%%%%%%%%%%%%%%%%%%%%%%%%%%%%%%%%%%%%%%%%%%%%%%%%%%%%%%%%%%%%

\if1\blind
{
  \title{\bf R-estimators in GARCH models; asymptotics, applications and bootstrapping}
  
  \author{Hang Liu\footnote{Email: h.liu11@lancaster.ac.uk}\hspace{.2cm} and Kanchan Mukherjee\footnote{The author for correspondence. Email: k.mukherjee@lancaster.ac.uk} \\
    Department of Mathematics and Statistics, Lancaster University
    }
  \maketitle
} \fi

\if0\blind
{
  \bigskip
  \bigskip
  \bigskip
  \begin{center}
    {\LARGE\bf Title}
\end{center}
  \medskip
} \fi

\bigskip
\begin{abstract}
The quasi-maximum likelihood estimation is a commonly-used method for estimating GARCH parameters. However, such estimators are sensitive to outliers and their asymptotic normality is proved under the finite fourth moment assumption on the underlying error distribution. In this paper, we propose a novel class of estimators of the GARCH parameters based on ranks, called R-estimators, with the property that they are asymptotic normal under the existence of a more than second moment of the errors 
and are highly efficient. We also consider the weighted bootstrap approximation of the finite sample distributions of the R-estimators. We propose fast algorithms for computing the R-estimators and their bootstrap replicates. Both real data analysis and simulations show the superior performance of the 
proposed estimators under the normal and heavy-tailed distributions. Our extensive simulations also reveal excellent coverage rates of the weighted bootstrap approximations. In addition, we discuss empirical and simulation results of the R-estimators for the higher order GARCH models such as the GARCH~($2, 1$) and asymmetric models such as the GJR model.
\end{abstract}

\noindent%
{\it Keywords:}  R-estimation, Empirical process, Higher order GARCH models, Weighted bootstrap.

{\it JEL classification:} C13, C14, C22.  
\vfill

\newpage
\spacingset{1.5} % DON'T change the spacing!

\section{Introduction}
\setcounter{equation}{0} 

\subsection{Robust estimation based on ranks}

Consider observations $\{X_t; 1 \le t \le n\}$ from a financial time series with the following representation
\begin{equation*}
X_t= \sigma_{t}\epsilon_t,
\end{equation*}
where $\{\epsilon_t; t \in \ZZ\}$ are unobservable i.i.d. non-degenerate error r.v.'s with mean zero 
and unit variance and 
\begin{equation}\label{m2}
\sigma_{t}= (\omega_0+\sum_{i=1}^p \alpha_{0i}X^2_{t-i}+\sum_{j=1}^q \beta_{0j} \sigma^2_{t-j})^{1/2}, \,\,t \in \ZZ,
\end{equation}
with $\omega_0$, $\alpha_{0i}, \,\, \beta_{0j}> 0$, $\forall \, i, j$. In the literature, such models are known as the GARCH~($p, q$) model and we assume that $\{X_t; t \in \ZZ\}$ is stationary and ergodic.

Estimation of parameters based on ranks of the residuals was discussed by Koul and Ossiander~(1994) for the homoscedastic autoregressive model and Mukherjee~(2007) for the heterscedastic models. Andrews~(2012) proposed a class of  R-estimators for the GARCH model using a log-transformation of the squared observations and then minimizing a rank-based residual dispersion function. However, such square-transformation may lead to loss of information under an asymmetric innovation distribution which is undesirable. This motivates us to define R-estimators for the GARCH model that uses the data directly without requiring such transformation. As shown in the motivating example of Section~\ref{sec.motivate}, our proposed R-estimators can be more efficient than those of Andrews (2012) when asymmetry is introduced for the innovation distribution while retaining efficiency for symmetric innovation distributions. Similar to the linear regression and autoregressive models, the asymptotic normality of R-estimators are derived under smoothness conditions of the innovation probability density function instead of that of the logged and squared innovation as in Andrews (2012).  
% on the other hand, are defined through the one-step approach based on an asymptotic linearity result of a rank-based central sequence and uses data directly without requiring such transformation
%; see, e.g., \cite{jureckova1971}, \cite{koul1971asymptotic}, \cite{jaeckel1972estimating}, \cite{koul1993asymptotics}, \cite{mukherjee2007generalized} and \cite{andrews_2012} for the discussion on the efficiency. 

As expected, the proposed class of R-estimators turns out to be robust and relatively efficient. Unlike the commonly-used quasi-maximum likelihood estimator (QMLE) which is asymptotically normal under the finite fourth moment assumption of the error distribution, the R-estimators turn out to be asymptotically normal under the assumption of only a finite 
$2+\delta$-th moment for some $\delta>0$. The efficiency property of the R-estimators is further confirmed based on the simulated data from the GARCH~($1, 1$) model and the higher order GARCH~($2, 1$) model to fill some void in the literature since the computation and empirical analysis for the higher order GARCH models are not considered widely. Analysis of real data shows that the numerical values of R-estimates can be different from the QMLE and the subsequent analysis of the GARCH residuals shows that such difference may be attributed to
the infinite fourth moment of the innovation distribution, which leads to the failure of the QMLE.

%Like most R-estimators in the literature,  our R-estimator for the GARCH model is also $\sqrt{n}$-consistent and highly efficient; see, e.g., \cite{mukherjee2007generalized} and \cite{andrews_2012} for the discussion on the efficiency. Also, unlike the QMLE, our R-estimator only requires finite second moment of the error distribution. The efficient property of the R-estimator is further confirmed through extensive simulation for not only the GARCH~($1, 1$) model but also the GARCH ($2, 1$) case, which has been investigated in a very limited literature to our best knowledge. What is more, real data analysis shows that the R-estimator could end up with quite different estimate from the QMLE. Through analysing the residuals, we show that the  difference may be caused by inifinite fourth moment of the innovation distribution, which leads to the failure of the QMLE.

Robust estimation of the GARCH parameters has been studied extensively in the literature although the attention has been focused exclusively on the class of M-estimators except in Andrews (2012). See, for example, Berkes and Horváth (2004), Mukherjee (2008), Francq et al.~(2011), Fan et al.~(2014) and Zhu and Ling~(2011) and the references in those papers. One conspicuous issue with previous studies is related to the estimation of an identifiable scale parameter that leads to often more than one stage of estimation. As pointed out by Fan et al.~(2014, Section 7.2), such estimation is important for comparing the bias performance. Some simulation study by Fan et al.(2014, Section 7.2) to compare M-estimators with the R-estimators proposed by Andrews (2012) revealed that these two classes of estimators have almost indistinguishable asymptotic performance while the rank-based estimators are slightly better and this provides another motivation for considering R-estimators. However, one problem unaddressed in Andrews (2012) was that the scale was not estimated. In Section 
\ref{subsec.cvp} of this paper, we propose a simple consistent estimate of the scale based on R-estimators and the general principle can be applied for M-estimation as well.

Since the proposed class of the R-estimators are shown to converge to normal distributions, of which the covariance matrices do not have explicit forms, we employ a bootstrap method to approximate the distributions of the R-estimators. 
%There is a vast literature on the bootstrap for the GARCH model, the most commonly used approach is the residual bootstrap; see \cite{article}, \cite{mancini2005robust} and \cite{jeong2017residual}, among others. Other alternative approaches include the $m$-out-of-$n$ bootstrap in \cite{hall2003inference}, \cite{linton2010estimation}; the block bootstrap in \cite{corradi2008bootstrap}; the weighted bootstrap (WBS) for the M-estimator for the GARCH model in \cite{iqbal2010m} and \cite{mukherjee2019}. 
Chatterjee and Bose~(2005) used the weighted bootstrap method for an estimator defined by smooth estimating equations. We consider weighted bootstrap of R-estimator where the equations are non-smooth functions because the residual ranks are integer-valued and non-smooth.
%Hall and Yao~(2003) and Linton et al.~(2010) considered percentile-t subsampling bootstrap of the QMLE based on residuals. However, the weighted bootstrap is not subsumed by the residual bootstrap and other types of bootstrap discussed by these authors. Moreover, our extensive simulation shows that the weighted bootstrap has reasonable coverage rates even under a heavy-tailed distribution and for a small simple size.
%Since our R-estimator is shown to converge to some unknown distribution, the bootstrap technique is employed to study the distribution. In a vast literature of the bootstrap for the GARCH model, the most commonly used approach is the residual bootstrap; see \cite{article}, \cite{mancini2005robust} and \cite{jeong2017residual}, among others. Other alternative approaches include the $m$-out-of-$n$ bootstrap in \cite{hall2003inference}, \cite{linton2010estimation}; the block bootstrap in \cite{corradi2008bootstrap}; the weighted bootstrap (WBS) for the M-estimator for the GARCH model in \cite{iqbal2010m} and \cite{mukherjee2019}. The WBS was proposed by \cite{chatterjee2005} for an estimator which solves an equation. It is preferable for its simplicity (only the weights need to be resampled instead of the data) and being computational friendly (the algorithm for computing the underlying estimator can be used for computing the bootstrap estimator). Therefore, here we consider the WBS for our R-estimator. 
Our extensive simulation study provides evidence that the weighted bootstrap has good coverage rates even under heavy-tailed innovation distribution and with moderate sample size.

Finally, we use the R-estimators for estimating parameters of the GJR~($p, q$) model proposed by Glosten et al.~(1993), which is used to estimate the asymmetry effect of financial time series. Simulation results demonstrate good performance of the R-estimators for the GJR model.

The main contributions of the paper are threefold. First, a new class of robust and efficient estimators for the GARCH model parameters is proposed. Second, the asymptotic distributions of the proposed estimators are derived based on weak assumption on the error moments. Third, weighted bootstrap approximations of the distribution of the R-estimators are investigated through extensive simulations. In particular, we propose algorithms for computing the R-estimators and the bootstrap replicates, which are computational friendly and easy to implement. 

\subsection{A motivating example}\label{sec.motivate}

\begin{figure}[!htbp]
\caption{Boxplots of the QMLE, BA's and our proposed R-estimators  (van der Waerden) under the skew normal (upper panel) and standard normal (lower panel) innovation densities. In each panel, the MSE ratios of the QMLE with respect to other estimators are reported. The horizontal line represents the actual parameter value.\vspace{-5mm}}
\begin{center}
\begin{tabular}{c}
\includegraphics[width=0.9\textwidth, height=0.5\textwidth]{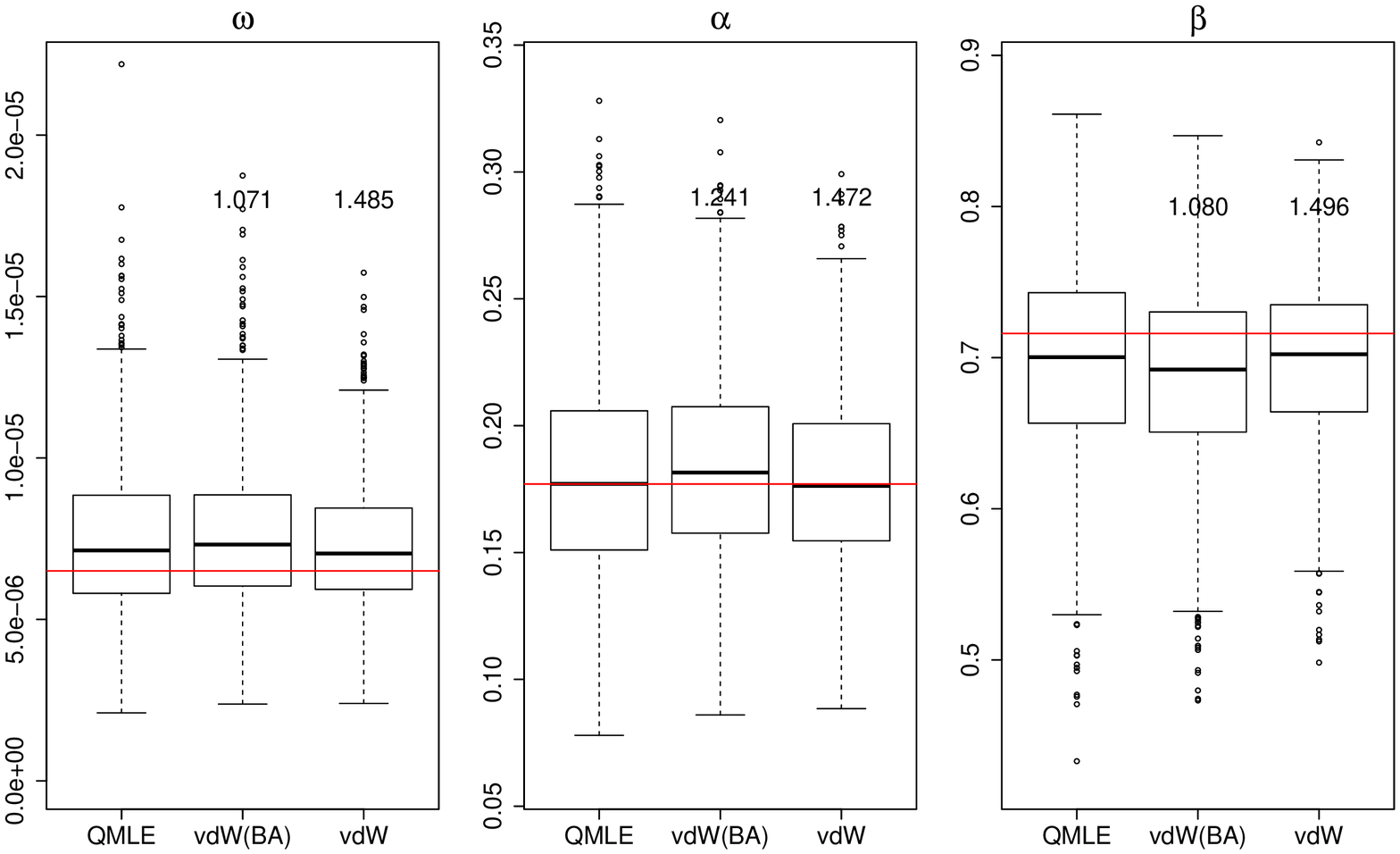} \\
\includegraphics[width=0.9\textwidth, height=0.5\textwidth]{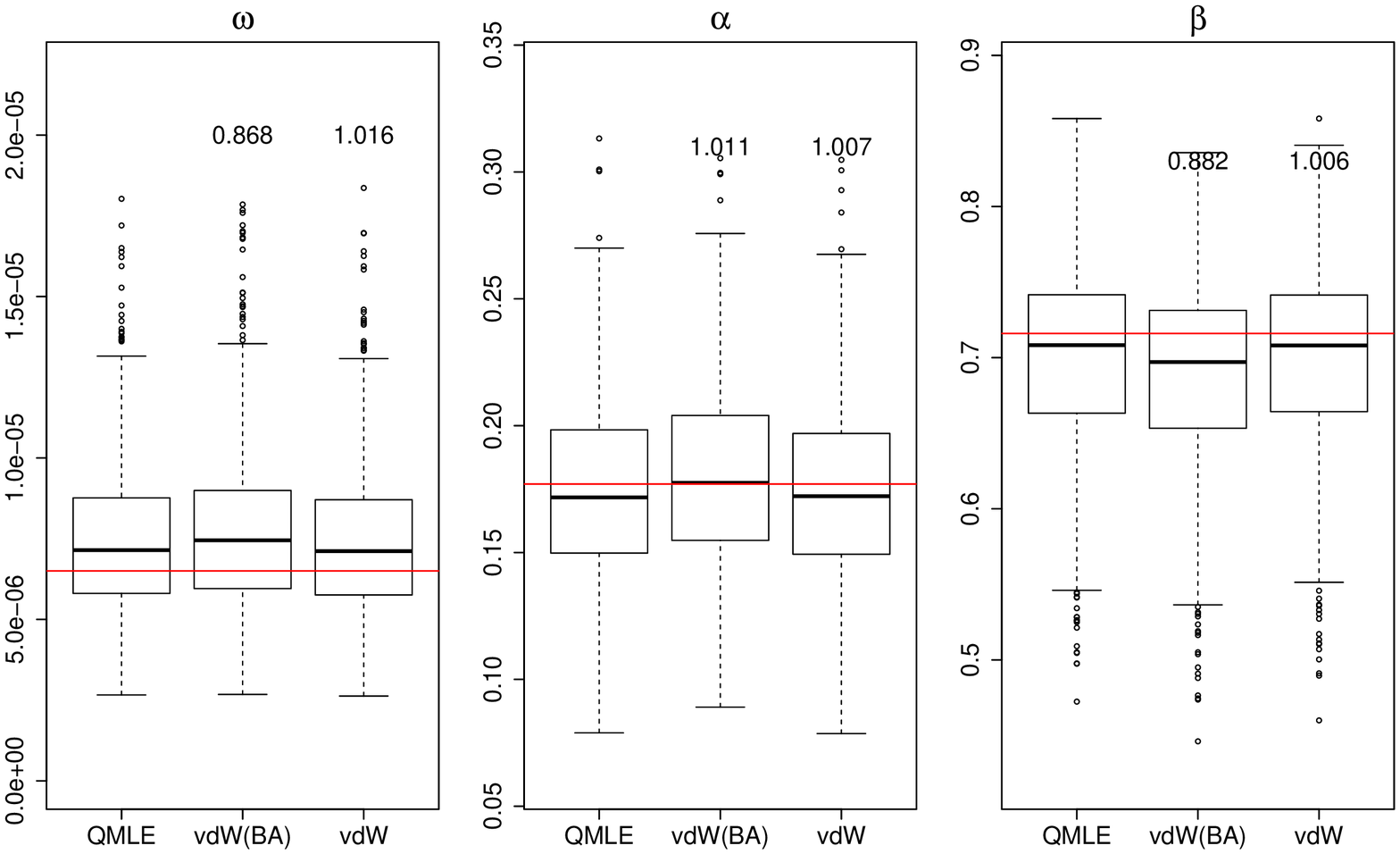}\vspace{-1mm}
\end{tabular}
\end{center}
\vspace{-3mm}\label{Fig: BA}
\end{figure}

To illustrate the advantages of our proposed R-estimator over the commonly-used QMLE and R-estimator of Andrews (2012) (BA, henceforth), we consider below some simulation results corresponding to the GARCH~($1, 1$) model with underlying standardized innovation density being (i) the standard normal distribution and (ii) the skew normal distribution (see e.g. Azzalini and Dalla Valle (1996) for details of such distribution). We generate $R=1000$ samples of size $n=1000$ with parameter values $(\omega, \alpha, \beta)^\prime = (6.50\times 10^{-6}, 0.177, 0.716)^\prime$ as in Section \ref{sim.Rest}. Simulation results described below are similar to various other choices of the true parameters. To make a fair comparison, for both R-estimators we use the van der Waerden score (vdW, henceforth); our vdW score is given in Section \ref{sec.exam} while BA's one has a different form; see Andrews (2012, Section 3) and Section \ref{subsec.cvp} for how to obtain a consistent estimator of the unknown parameters. Note that, only R-estimator of $(\alpha/\omega, \beta)^\prime$ is given in Andrews (2012); to compare each component, we provide a consistent estimator of $\omega$  for her R-estimator in Section \ref{subsec.cvp}. The resulting boxplots of all estimators are displayed in Figure \ref{Fig: BA} under the skew normal (upper panel) and standard normal (lower panel) innovation densities and the MSE ratios of the QMLE over other estimators are reported.

An inspection of these plots reveals superiority of our R-estimator over the QMLE and BA's. Under the normal error distribution, the distribution patterns of the R-estimators are quite similar to the QMLE around the true parameter value, and the MSE ratios of the QMLE over the R-estimators are all close to one. However, under the skew normal errors where asymmetry is introduced, our R-estimator has the least dispersion and with a gain of around $50\%$ efficiency over the QMLE, while the BA's R-estimator still has similar efficiency as the QMLE for $\omega$ and $\beta$. Therefore,  although the BA's R-estimator achieves efficiency as ours under the normal distribution, it is not as efficient as ours under an asymmetric distribution.

\subsection{Outline of the paper}

The rest of the paper is organized as follows. Section \ref{sec.defR.garch} defines a class of scale-transformed R-estimators based on an asymptotic linearity result of a rank-based central sequence. A consistent estimator of the unknown scalar is provided, and the asymptotic distributions and efficiency of the resulting R-estimators are discussed.  Also, we give an algorithm for computing the R-estimators. Section \ref{sec.num.R.GARCH} contains empirical and simulation results of the R-estimators. Section \ref{sec.Boot} describes the weighted bootstrap for the R-estimators and includes extensive simulation results. Section \ref{sec.GJR.R.main} considers an application to the GJR model. Conclusion is given in Section \ref{sec.con.R.GARCH}. The technique used to establish the asymptotic distribution is included in Appendix~\ref{sec.proof}.

\section{The class of R-estimators for the GARCH model}\label{sec.defR.garch}
In this section, we first define a central sequence of R-criteria $\{\h{\mbf R}_n(\bth)\}$ based on ranks of the residuals of the GARCH model. We prove the asymptotic uniform linear expansion (\ref{Prop.asy.lin1}) of this central sequence, which enables us to define one-step R-estimator $\h{\bth}_{n\vp}$ in (\ref{onestep.def}) as a root-$n$ consistent estimator of a scale-transformed GARCH parameter
\begin{equation}\label{bth0vp}
\bth_{0\vp}=\left( c_\vp\omega_0, c_\vp\alpha_{01}, ..., c_\vp\alpha_{0p}, \beta_{01}, ..., \beta_{0q} \right)^{'}
\end{equation}
with a constant $c_\vp>0$ satisfying (\ref{eq:IdenCon}). Based on $\h{\bth}_{n\vp}$ and $\{X_t\}$, we are able to derive a consistent estimator of $c_\vp$ and thus a root-$n$ consistent R-estimator $\hat{\bth}_n$ of $\bth_0$. We discuss some computational aspects and propose a recursive algorithm for computation in Section \ref{com.asp}.

Notations: Throughout the paper, for a function $g$, we use $\dot{g}$ and $\ddot{g}$ to denote its first and second derivatives whenever they exist. We use 
$c$, $b$, $c_1$ to denote positive constants whose values can possibly change from line to line. Let $\epsilon$ be a generic random variable (r.v.) with the same distribution as $\{\epsilon_t\}$ and let $F$ and $f$ denote the cumulative distribution function (c.d.f.) and probability density function (p.d.f.) of $\epsilon$, respectively. Let $\eta_t := \epsilon_t/\sqrt{c_\vp}$ and $\eta$ be a generic r.v. with the same distribution as $\{\eta_t\}$. Let $G$ and $g$ be the c.d.f. and p.d.f. of $\eta$, respectively. A sequence of stochastic process $\{Y_n(\cdot)\}$ is said to be $u_{\rm P}(1)$ (denoted by $Y_n=u_{\rm P}(1)$) if for every $c>0$, 
$\sup \{|Y_n(\bb)|; ||\bb|| \le c\}=o_{\rm P}(1)$, where $||\cdot||$ stands for the Euclidean norm.

\subsection{Rank-based central sequence}

From Lemma 2.3 and Theorem 2.1 of Berkes et al.~(2003),
$\sigma^2_t$ of (\ref{m2}) has the unique almost sure representation
$\sigma^2_t=c_0+\sum_{j=1}^{\infty} c_j X^2_{t-j},\,\,\, t \in \mathbb{Z}$,
where $\{c_j; j \ge 0\}$ are defined in (2.7)-(2.9) of Berkes et al.~(2003). 

Let $\bth_0=(\omega_0, \alpha_{01}, \ldots, \alpha_{0p}, \beta_{01}, \ldots, \beta_{0q})^{\prime}$ denote the true parameter belonging to a compact subset 
$\bTheta$ of $(0, \infty)^{1+p} \times (0, 1)^{q}$. A typical element in $\bTheta$ is denoted by 
$$\bth=(\omega, \alpha_{1}, \ldots, \alpha_{p}, \beta_{1}, \ldots, \beta_{q})^{\prime}.$$ Define the variance function by
$$
v_t(\bth)=c_0(\bth)+\sum_{j=1}^{\infty} c_j(\bth) X_{t-j}^2, \,\,\,\,\, 
\bth \in \bTheta, t \in \mathbb{Z}, 
$$
where the coefficients $\{c_j (\bth); j \ge 0\}$ are given in (3.1) of Berkes et al.~(2003) with the property 
$c_j (\bth_0)=c_j$, $j \ge 0$, 
so that the variance functions satisfy $v_{t}(\bth_0)=\sigma_t^2$, $t \in \mathbb{Z}$ and
\begin{equation*}
X_t= \{v_{t}(\bth_0)\}^{1/2}\epsilon_t, \;\; 1 \le t \le n.
\end{equation*}
Let $\{\h{v}_t(\bth)\}$ be observable approximation of $\{v_t(\bth)\}$, which is defined by 
$$
\h{v}_t(\bth)=c_0(\bth)+I(2 \le t) \sum_{j=1}^{t-1} c_j (\bth) X^2_{t-j}, \,\, \bth \in \bTheta, \,\, 1 \le t \le n.
$$
Let $H^*(x) = x\{-\d{f}(x)/f(x)\}$. The maximum likelihood estimator (MLE) is a solution of $\bDelta_{n, f}(\bth) = \0$, where 
\begin{equation*}
\bDelta_{n, f}(\bth) := n^{-1/2} \s \frac{\d{\mbf v}_t(\bth)}{v_t(\bth)}\left\lbrace 1-H^*\left[ \frac{X_t}{v_t^{1/2}(\bth)} \right] \right\rbrace.
\end{equation*}
However, $f$ in $H^*$ is usually unknown and we therefore consider an approximation to 
$\bDelta_{n, f}(\bth)$. 

Let $\vp: (0, 1) \rightarrow \R$ be a score function satisfying some regularity conditions which will be discussed later. Examples of $\vp$ are given in Section \ref{sec.exam}. Let $R_{nt}(\bth)$ denote the rank of $X_t/v_t^{1/2}(\bth)$ among $\{X_j/v_j^{1/2}(\bth); 1\leq j \leq n\}$. In linear regression models, the MLE has the same asymptotic efficiency as an R-estimator based on the score function $\vp(u) = -\d{f}(F^{-1}(u))/f(F^{-1}(u)) $. For the estimation of the scale parameters, the MLE corresponds to the {\it central sequence} 
\begin{equation}\label{R}
{\mbf R}_n(\bth):={\mbf R}_{n, \vp} (\bth) = n^{-1/2}\s \frac{\d{\mbf v}_t(\bth)}{v_t(\bth)} \left\lbrace 1-\vp\left[\frac{R_{nt}(\bth)}{n+1}\right]\frac{X_t}{v_t^{1/2}(\bth)}\right\rbrace.
\end{equation}
However, since $v_t(\bth)$ is unobservable, we therefore replace it by $\h{v}_t(\bth)$. 
Let $\h{R}_{nt}(\bth)$ denote the rank of $X_t/\h{v}_t^{1/2}(\bth)$ among $\{X_j/\h{v}_j^{1/2}(\bth); 1\leq j \leq n\}$.
We define {\it rank-based central sequence} as
\begin{equation}\label{def.R}
\h{\mbf R}_{n}(\bth):= \h{\mbf R}_{n, \vp}(\bth)=n^{-1/2}\s \frac{\d{\h{\mbf v}}_t(\bth)}{\h{v}_t(\bth)}\left\lbrace 1-\vp\left[\frac{\h{R}_{nt}(\bth)}{n+1}\right]\frac{X_t}{\h{v}_t^{1/2}(\bth)}\right\rbrace.
\end{equation}

\subsection{One-step R-estimators and their asymptotic distributions}

As we will show in this subsection, our R-estimator $\hat{\bth}_n$ of $\bth_0$ is obtained by estimating first  $\bth_{0\vp}$ and then the unknown scalar $c_{\vp}$.

\subsubsection{One-step R-estimators of $\bth_{0\vp}$}

To define the R-estimator in terms of the classical Le Cam's one-step approach as in Hallin and La Vecchia~(2017) and Hallin et al.~(2019), we derive the asymptotic linearity of the rank-based central sequence under the following assumptions. Let $c_\vp>0$ be defined by
$$
\sqrt{c_\vp} = {\mathrm E}\left[ \vp\left(F \left(\epsilon_t \right)\right)\epsilon_t \right] 
$$
so that
\begin{equation}\label{eq:IdenCon}
{\mathrm E}\left\lbrace\vp\left[F \left(\epsilon_t \right)\right] \frac{\epsilon_t}{\sqrt{c_\vp}}\right\rbrace=1. 
\end{equation}
Define $\mu(x) := \int_{-\infty}^x s g(s) ds$. Since $g(x)>0$, $\mu(x)$ is strictly decreasing on $(-\infty, 0]$ with range 
$[\mu(0), 0]$ and strictly increasing on $[0, +\infty)$ with range $[\mu(0), 0]$. The functions $y \rightarrow \mu^{-1}(y)$ on
$[\mu(0), 0]$ with ranges $(-\infty, 0]$ and $[0, +\infty)$ are well-defined when the ranges are considered separately. 

The following conditions on the distribution of $\eta_t$ are assumed for the proof of Theorem~%\ref{supp-Thm.emp}
\ref{Thm.emp} in Appendix on the approximation of a scale-perturbed weighted mixed-empirical process by its non-perturbed version. 

\noindent\textbf{Assumption (A1)}. 
(i). The function $x^2 g(x)$ is bounded on $x \in \mathbb{R}$ (and consequently so are the functions $g(x)$ and $xg(x)$); functions $y \rightarrow \mu^{-1}(y) g(\mu^{-1}(y))$ and 
$y \rightarrow (\mu^{-1}(y))^2 g(\mu^{-1}(y))$ are uniformly continuous on $[\mu(0), 0]$ when the ranges are considered separately as in the definition of $\mu$ above;

(ii). 
$$\underset{\delta \rightarrow 0}{\lim} \sup \left\lbrace |x| \int_0^1 |xg(x) - (x + hx\delta) g(x + hx\delta)| dh; x \in \mathbb{R} \right\rbrace = 0;
$$ 
(iii). There is a $\delta >0$ such that ${\mathrm E}|\eta_t|^{2+\delta} < \infty$.\medskip

We remark that Assumption (i) entails that $\mu(x)$ is uniformly Lipschitz continuous in scale in the sense that for some constant 
$0< c < \infty$ and for every $s \in \mathbb{R}$, we have $\sup_{x \in \mathbb{R}} |\mu (x + xs) - \mu (x)| \leq c|s|$.

A more easily verifiable condition for Assumption (ii) can be obtained, for example, when $g$ admits the derivative $\dot{g}$ which satisfies that for some $\delta > 0$,
$$\sup\{ x^2 \sup|g(y) + y\dot{g}(y)|; x(1-\delta) < y < x(1+\delta) \} < \infty.$$
In particular, Assumptions (i), (ii) and (iii) in (A1) hold for a wide range of error distributions, including normal, double-exponential, logistic and $t$-distributions with degrees of freedom more than $2$ which are considered for simulation study.

We also need the following assumptions on the parameter space and the score function $\vp$.

\noindent\textbf{Assumption (A2)}. Denoting by $\bTH_0$ the set of interior points of $\bTH$, we assume that $\bth_{0}, \bth_{0\vp} \in \bTH_0.$\medskip

\noindent\textbf{Assumption (A3)}. The score function $\vp$ is non-decreasing, right-continuous with only a finite number of points of discontinuity and is bounded on $(0, 1). $ \medskip

We now compare our assumptions with those made by Andrews (2012), to be called as BA1, BA2 etc.
Assumption BA1 states the stationarity and ergodicity of $\{X_t\}$ similar to what we assume but not necessarily finiteness of the variance of $X_t$. However, estimation of the intercept parameter $\alpha_{00}$ appearing in Andrews (2012, Equation (2.2)) is not considered there. In this paper, we estimate the intercept parameter under the finite variance assumption of 
$X_t$ as in (A4). Higher moment assumptions were also made in Fan et al.~(2014) while estimating the equivalent scale parameter $\eta_f$. 

Assumptions BA2 and BA3 are related to the uniqueness of parameters and the existence of the non-degenerate errors and observations. We assume non-degenerate models as is common in the literature. Assumptions BA4 and BA7 are on the score functions which are bounded, non-decreasing and left-continuous. In (A3), we assume that the score functions are bounded, non-decreasing and right-continuous with finite number of discontinuity. Assumptions BA5 and BA6 are on the cdf and pdf of the log of the squared error distribution. We have made analogous assumptions on the error pdf itself and uniform continuity on the $\mu^{-1}$-transformed axis in (A1)(i) and (A1)(ii); the later came up as a part of the technical assumptions in using some convergence results of empirical processes to derive the asymptotic distribution of the R-estimators. Assumptions BA8, BA9 and BA10 describe various scenarios related to some of the component parameters equal to zero as in Francq and Zakoian (2007). In (A2), we assume that all parameters are positive as in Berkes and Horváth (2004), Mukherjee (2008) and Francq et al.~(2011) in relation to the M-score and this corresponds to BA8. 

To state the asymptotic linearity of $\h{\mbf R}_n(\bth)$, we introduce the following notation. Let
$$
\gamma(\vp) := \int_0^1 \int_0^1 G^{-1}(u) G^{-1}(v) \left[  \min\{u, v\} - uv \right] d \vp(u) d \vp(v),
\,\, \J(\bth) :={\mathrm E}(\d{\mbf v}_t (\bth) \d{\mbf v}_t^{'}(\bth)/v_t^2 (\bth)) 
$$
$$\rho(\vp) := \int_0^1 \{G^{-1}(u)\}^2 g\{G^{-1}(u) \} d \vp(u),\,\,
\sigma^2(\vp) := {\mathrm E}\left\lbrace \vp\left[G(\eta_t) \right] \eta_t \right\rbrace^2 - 1,
$$
\begin{equation}\label{lam.def}
\lambda(\vp):= \int_{0}^{1}  \int_{0}^{1} G^{-1}(u) I(v \leq u) (1- G^{-1}(v) \vp(v)) dv d\vp(u).
\end{equation}
Let $Z$ be the r.v. $Z:= \int_0^1 G^{-1}(u) B(u) d\vp(u)$,
where $B(.)$ is the standard Brownian bridge. Then $Z$ has mean zero and variance $\gamma(\vp)$; see the proof in  Lemma~%\cref{supp-lemmaRT.diff} 
\ref{lemmaRT.diff} for details. Let $\tilde{G}_n(x)$, $x\in \R$ be the empirical distribution function of $\{\eta_t\}$ (which is unobservable), 
$$
\Q_n(\bth) := \int_{0}^{1}  n^{-1/2} \s \frac{\d{\mbf v}_t (\bth)}{v_t (\bth)} \left[ \mu(G^{-1}(u)) -  \mu(\tilde{G}_n^{-1}(u))\right]d \varphi(u), 
$$
$$
\N_n (\bth) := n^{-1/2} \s \frac{\d{\mbf v}_t (\bth)}{v_t (\bth)}\left\lbrace 1-\eta_t \vp\left[ G(\eta_t) \right] \right\rbrace.
$$
The following proposition states the asymptotic uniform linearity of $\h{\mbf R}_n(\bth)$.

\begin{proposition}\label{Prop.asy.linear}
Let Assumptions (A1)-(A3) hold. Then for $\bb \in \mathbb{R}^{1+p+q}$ with $||\bb|| < c$,
\begin{equation}\label{Prop.asy.lin1}
\h{\mbf R}_n(\bth_{0\vp} + n^{-1/2} \bb) - \h{\mbf R}_n(\bth_{0\vp}) = (1/2 + \rho(\vp)/2)\J (\bth_{0\vp}) \bb + u_{\rm P}(1).
\end{equation}
Moreover,
\begin{equation}\label{Prop.asy.lin2}
\h{\mbf R}_n(\bth_{0\vp}) = \Q_n(\bth_{0\vp}) + \N_n(\bth_{0\vp}) + u_{\rm P}(1),
\end{equation}
where $\Q_n(\bth_{0\vp}) $ converges in distribution to ${\mathrm E}(\dot{\mbf v}_1(\bth_{0\vp}) /v_1(\bth_{0\vp})) Z$ with mean zero and covariance matrix ${\mathrm E}(\dot{\mbf v}_1(\bth_{0\vp}) /v_1(\bth_{0\vp}) ) {\mathrm E}(\dot{\mbf v}_1^\prime (\bth_{0\vp}) /v_1(\bth_{0\vp}) ) \gamma(\vp)$ and $\N_n(\bth_{0\vp})  \rightarrow {\mathcal N}( \0, \J(\bth_{0\vp})  \sigma^2(\vp))$.
\end{proposition}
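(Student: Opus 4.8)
The plan is to prove both displays by first removing the unobservable/observable discrepancy and then analysing a weighted, scale-perturbed empirical process built from the residual ranks. Throughout write $\W_t := \d{\mbf v}_t(\bth_{0\vp})/v_t(\bth_{0\vp})$. Since $\bth_{0\vp}$ rescales the $\omega$- and $\alpha$-components by $c_\vp$ while leaving the $\beta_{0j}$ fixed, one checks that $v_t(\bth_{0\vp}) = c_\vp v_t(\bth_0) = c_\vp\sigma_t^2$, so that $X_t/v_t^{1/2}(\bth_{0\vp}) = \epsilon_t/\sqrt{c_\vp} = \eta_t$; this is the whole point of the scale transformation and makes $\tilde G_n$ the empirical c.d.f.\ of the evaluated residuals at the base point. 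I would first show that replacing $\h v_t,\,\d{\h{\mbf v}}_t$ by $v_t,\,\d{\mbf v}_t$ changes $\h{\mbf R}_n(\bth)$ by $u_{\rm P}(1)$ uniformly over $\|\bb\|\le c$. This is an initialisation argument: the coefficients $c_j(\bth)$ of Berkes et al.~(2003) decay geometrically, so $\sup_\bth|v_t(\bth)-\h v_t(\bth)|$ and the analogous derivative differences are dominated by a geometrically weighted tail of the $X^2$'s, and with the boundedness of $\vp$ from (A3) and the moment bound (A1)(iii) the contribution is negligible. From here on I work with $v_t$, and by the ergodic theorem $n^{-1}\s\W_t\W_t^{\prime}\to\J(\bth_{0\vp})$.

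\textbf{Decomposition at the base point, (\ref{Prop.asy.lin2}).} At $\bth_{0\vp}$ the residuals are exactly $\eta_t$ and $\h R_{nt}(\bth_{0\vp})/(n+1)=\tfrac{n}{n+1}\tilde G_n(\eta_t)$. I split $\vp[\tilde G_n(\eta_t)]=\vp[G(\eta_t)]+\{\vp[\tilde G_n(\eta_t)]-\vp[G(\eta_t)]\}$. The $\vp[G(\eta_t)]$ piece reproduces $\N_n(\bth_{0\vp})$ exactly. For the remainder I write the score increment as $\int_0^1\{I(u\le\tilde G_n(\eta_t))-I(u\le G(\eta_t))\}\,d\vp(u)$ and convert the indicators to the quantile events $\{\eta_t\ge\tilde G_n^{-1}(u)\}$ and $\{\eta_t\ge G^{-1}(u)\}$; integrated against $d\vp(u)$ this is the increment $T_n(\tilde G_n^{-1}(u))-T_n(G^{-1}(u))$ of the weighted process $T_n(x):=n^{-1/2}\s\W_t\eta_t I(\eta_t\ge x)$. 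Decomposing $T_n$ into its martingale part $n^{-1/2}\s\{\W_t\eta_t I(\eta_t\ge x)+\W_t\mu(x)\}$ — using that $\W_t$ is $\mathcal F_{t-1}$-measurable, $\eta_t\perp\mathcal F_{t-1}$, and ${\mathrm E}[\eta I(\eta\ge x)]=-\mu(x)$ — and its drift $-\mu(x)\,n^{-1/2}\s\W_t$, the martingale part evaluated at the random versus fixed quantiles cancels by asymptotic equicontinuity (as $\tilde G_n^{-1}(u)\to G^{-1}(u)$), leaving $\Q_n(\bth_{0\vp})$ up to $u_{\rm P}(1)$. For the limits: $\N_n$ is a martingale-difference sum, since ${\mathrm E}\{1-\eta\,\vp[G(\eta)]\}=1-{\mathrm E}[\vp(F(\epsilon))\epsilon]/\sqrt{c_\vp}=0$ by (\ref{eq:IdenCon}) (using $G(\eta_t)=F(\epsilon_t)$), and the conditional covariance converges ergodically to $\J(\bth_{0\vp})\sigma^2(\vp)$, giving the stated normal limit. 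For $\Q_n$, the empirical-quantile (Bahadur) expansion with $\mu^{\prime}(x)=xg(x)$ gives $n^{1/2}[\mu(G^{-1}(u))-\mu(\tilde G_n^{-1}(u))]\Rightarrow G^{-1}(u)B(u)$, while $n^{-1/2}\s\W_t=n^{1/2}\bar{\W}_n\to{\mathrm E}(\d{\mbf v}_1/v_1)$; integrating $d\vp(u)$ yields the limit ${\mathrm E}(\d{\mbf v}_1(\bth_{0\vp})/v_1(\bth_{0\vp}))Z$ of covariance $\gamma(\vp)$.

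\textbf{Asymptotic linearity, (\ref{Prop.asy.lin1}).} At $\bth=\bth_{0\vp}+n^{-1/2}\bb$ a Taylor expansion of $\log v_t$ gives residuals $\eta_t(1+\zeta_t)$ with $\zeta_t=-\tfrac12 n^{-1/2}\bb^{\prime}\W_t$ up to a term that is $u_{\rm P}(1)$ after summation. Two drifts arise in $\h{\mbf R}_n(\bth_{0\vp}+n^{-1/2}\bb)-\h{\mbf R}_n(\bth_{0\vp})$. (a)~The explicit scale factor in the residual contributes $-\vp[G(\eta_t)]\eta_t\zeta_t$; summing against $\W_t$ and using the ergodic theorem with ${\mathrm E}[\vp(G(\eta))\eta]=1$ and $\W_t\perp\eta_t$ produces $\tfrac12\J(\bth_{0\vp})\bb$. (b)~The scale perturbation also shifts the residual ranks; quantifying this is the role of Theorem~\ref{Thm.emp}, which replaces the scale-perturbed weighted mixed-empirical process by its unperturbed version plus an explicit drift. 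Integrating that drift against $d\vp$ and applying the ergodic theorem gives $\tfrac12\rho(\vp)\J(\bth_{0\vp})\bb$, where the factor $\{G^{-1}(u)\}^2 g\{G^{-1}(u)\}$ in $\rho(\vp)$ comes from $\mu^{\prime}(x)=xg(x)$ together with the extra $x=G^{-1}(u)$ carried by $\eta_t$. The change in the weight $\d{\mbf v}_t(\bth)/v_t(\bth)$ times the conditionally mean-zero bracket, and all products of two $O(n^{-1/2})$ factors, are $u_{\rm P}(1)$. Adding (a) and (b) yields the slope $(1/2+\rho(\vp)/2)\J(\bth_{0\vp})$.

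\textbf{Main obstacle.} The crux is step (b), namely Theorem~\ref{Thm.emp}: controlling a weighted empirical process whose evaluation points are the scale-perturbed residuals $\eta_t(1+\zeta_t)$, where the perturbation $\zeta_t$ is random, $t$-dependent, and only as integrable as $\W_t$ under the weak $2+\delta$ moment (A1)(iii), and where $\vp$ may be discontinuous (A3). Establishing the uniform (in $\|\bb\|\le c$ and in the score level $u$) asymptotic equicontinuity needed to replace the random perturbation by its deterministic drift is the main technical difficulty, and is exactly where the Lipschitz-in-scale property of $\mu$ and the uniform continuity of $y\mapsto\mu^{-1}(y)g(\mu^{-1}(y))$ and $y\mapsto(\mu^{-1}(y))^2 g(\mu^{-1}(y))$ from (A1)(i)--(ii) are used. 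By comparison, the initialisation reduction and the quantile expansions are routine.
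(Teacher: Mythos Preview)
Your overall strategy matches the paper's: reduce $\h{\mbf R}_n$ to ${\mbf R}_n$ (the paper's Lemma~\ref{lem.RRhat}), then decompose through intermediate quantities using the scale-perturbed weighted mixed-empirical process machinery of Theorem~\ref{Thm.emp} (and its uniform-in-$\bb$ extension Theorem~\ref{UUbb}). Your identification of the two drifts $(1/2)\J\bb$ and $(\rho(\vp)/2)\J\bb$ corresponds exactly to the paper's Lemmas~\ref{lem.Tn1N} and~\ref{lem.Tn21M}, and your Bahadur-based treatment of $\Q_n$ mirrors the $\D_{n3}^*$ step in Lemma~\ref{lemmaRT.diff}. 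One notational slip: in your $\Q_n$ paragraph you write ``$n^{-1/2}\s\W_t\to{\mathrm E}(\d{\mbf v}_1/v_1)$'', but $n^{-1/2}\s\W_t=n^{1/2}\bar\W_n$ diverges; what you need (and clearly intend) is $n^{-1}\s\W_t\to{\mathrm E}(\W_1)$, with the remaining $n^{1/2}$ absorbed into the Bahadur expansion.

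There is, however, one step you have genuinely underestimated. You call the initialisation reduction ``routine'' and justify it only by geometric decay of $|v_t-\h v_t|$ and $|\d{\mbf v}_t-\d{\h{\mbf v}}_t|$ together with boundedness of $\vp$. That handles the replacement of the \emph{weight} $\d{\h{\mbf v}}_t/\h v_t$ and of the \emph{residual factor} $X_t/\h v_t^{1/2}$, but not the replacement of the \emph{rank} $\h R_{nt}(\bth)$ by $R_{nt}(\bth)$. The point is that each rank is a function of \emph{all} residuals, including those for small $t$ where the truncation error is not small; moreover $\vp$ may be discontinuous, so closeness of $\h R_{nt}/(n+1)$ to $R_{nt}/(n+1)$ must be established, not just closeness of the residuals themselves. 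In the paper this is exactly the term $\K_n$ in the proof of Lemma~\ref{lem.RRhat}: one splits the sum at $t=\lfloor n^k\rfloor$ with $0<k<1/2$, bounds the small-$t$ piece trivially by $n^{k-1/2}$, and for $t\ge\lfloor n^k\rfloor$ shows that the number of index pairs $(i,t)$ at which the orderings of $X_i/v_i^{1/2}$ and $X_i/\h v_i^{1/2}$ disagree is $o_{\rm P}(1)$ via the set-inclusion bound $\mathcal A_{i,x,\bth}\subset\mathcal B_{i,x,\bth}$ and ${\mathrm E}\sum_{i\ge\lfloor n^k\rfloor}I(\mathcal B_{i,x,\bth})\lesssim n\rho^{\lfloor n^k\rfloor}$. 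This is not difficult once seen, but it is a separate argument from the geometric-decay bound on the weights and should not be folded into ``routine''.
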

The above asymptotic linearity allows us to define a class of R-estimators through the one-step approach. Let $\{\h{\bUpsilon}_n\}$ be a sequence of consistent estimator of 
$\bUpsilon_{\vp, g}(\bth_{0\vp}):= (1/2 + \rho(\vp)/2)\J(\bth_{0\vp}) $; see Section \ref{com.asp} for a construction of $\h{\bUpsilon}_n$. Let $ \bar{\bth}_n$ be a root-$n$ consistent estimator of $\bth_{0\vp}$ and, for technical reasons, we assume $ \bar{\bth}_n$ is asymptotically discrete. More precisely,  a sequence $\{\bar{\bth}_n\}$ is called discrete if there exists $K \in \mathbb{N}$ such that independent of $n \in \mathbb{N}$, $\bar{\bth}_n$ takes on at most $K$ different values in 
$$\mathcal{Q}_n := \{\bth \in \mathbb{R}^{1+p+q}: n^{-1/2} \left\Vert \bth - \bth_0 \right\Vert \leq c\}, \quad c > 0 \, \, \text{fixed};$$
see Kresis (1987, Section 4) for details. We remark that here asymptotically discreteness is only of theoretical interest since in practice 
$\bar{\bth}_n$ always has a bounded number of digits; see Le Cam and Yang~(2000, Chapter~6) and van der Vaart (1998, Section~5.7)  for more details. Then the one-step R-estimator is defined as 
\begin{equation}\label{onestep.def}
\h{\bth}_{n\vp} :=  \bar{\bth}_n - n^{-1/2} \left( \h{\bUpsilon}_n \right)^{-1} \h{\mbf R}_n ( \bar{\bth}_n).
\end{equation}

Note that strictly speaking, the R-estimators based on this definition are not functions of the ranks of the residuals only. However, we borrow the terminology from the regression and the homoscedastic-autoregression settings and still call them (generalized) R-estimators.  When, for example, $\varphi(u)= u-1/2$, $\h{\bth}_{n\vp}$ is an analogue of the Wilcoxon type R-estimator. 

The following theorem shows that the R-estimator defined in (\ref{def.R}) is $\sqrt{n}$-consistent estimator of $\bth_{0\vp}$. The proof is given in Appendix~%\ref{supp-sec.proof}
\ref{sec.proof}.

\begin{theorem}\label{thm.asy}
Let Assumptions (A1)-(A3) hold. Then, as $n \rightarrow \infty$,
\begin{equation}\label{asy}
\sqrt{n} \left(\h{\bth}_{n\vp} - \bth_{0\vp} \right) = -(1/2 + \rho(\vp)/2)^{-1} (\J(\bth_{0\vp}))^{-1}  (\Q_n(\bth_{0\vp})  + \N_n(\bth_{0\vp}) ) + o_{\rm P}(1).
\end{equation}
Hence as $n \rightarrow \infty$, $\sqrt{n} \left(\h{\bth}_{n\vp} - \bth_{0\vp} \right)$ is normal with mean $\0$ and covariance matrix
$$(\J(\bth_{0\vp}))^{-1} \frac{\left[ 4\gamma(\vp)  + 8  \lambda(\vp) \right]  {\mathrm E}(\d{\mbf v}_1(\bth_{0\vp})/v_1(\bth_{0\vp})) {\mathrm E}\left( \d{\mbf v}_1^\prime(\bth_{0\vp}) /v_1(\bth_{0\vp}) \right) + 4 \sigma^2(\vp) \J(\bth_{0\vp}) }{(1 + \rho(\vp))^2}  (\J(\bth_{0\vp}))^{-1}.$$
\end{theorem}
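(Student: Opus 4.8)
The plan is to run the classical Le Cam one-step argument, using Proposition~\ref{Prop.asy.linear} as the main engine, and then to convert the resulting linear representation into the stated covariance through a joint central limit theorem for the pair $\big(\Q_n(\bth_{0\vp}), \N_n(\bth_{0\vp})\big)$. Write $\bb_n := \sqrt{n}(\bar{\bth}_n - \bth_{0\vp})$, which is $O_{\rm P}(1)$ by root-$n$ consistency and, by the asymptotic discreteness of $\bar{\bth}_n$, takes at most finitely many values on $\mathcal{Q}_n$. The discreteness is exactly what lets me evaluate the uniform expansion (\ref{Prop.asy.lin1}) at the \emph{random} argument $\bb_n$: splitting over the finitely many possible values and using that each contributes a $u_{\rm P}(1)$ term, I obtain
\begin{equation*}
\h{\mbf R}_n(\bar{\bth}_n) = \h{\mbf R}_n(\bth_{0\vp}) + \bUpsilon_{\vp, g}(\bth_{0\vp})\,\bb_n + o_{\rm P}(1), \qquad \bUpsilon_{\vp, g}(\bth_{0\vp}) = (1/2 + \rho(\vp)/2)\J(\bth_{0\vp}).
\end{equation*}
Substituting into (\ref{onestep.def}), multiplying by $\sqrt{n}$, and rearranging gives
\begin{equation*}
\sqrt{n}(\h{\bth}_{n\vp} - \bth_{0\vp}) = \big[\I - (\h{\bUpsilon}_n)^{-1}\bUpsilon_{\vp, g}(\bth_{0\vp})\big]\bb_n - (\h{\bUpsilon}_n)^{-1}\h{\mbf R}_n(\bth_{0\vp}) + o_{\rm P}(1).
\end{equation*}
By the consistency $\h{\bUpsilon}_n \to \bUpsilon_{\vp, g}(\bth_{0\vp})$ in probability we have $(\h{\bUpsilon}_n)^{-1}\bUpsilon_{\vp, g}(\bth_{0\vp}) \to \I$, so the bracketed factor is $o_{\rm P}(1)$ and, multiplied by $\bb_n = O_{\rm P}(1)$, vanishes; in the surviving term $(\h{\bUpsilon}_n)^{-1}$ may be replaced by $(1/2 + \rho(\vp)/2)^{-1}(\J(\bth_{0\vp}))^{-1}$. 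Applying the decomposition (\ref{Prop.asy.lin2}) at the fixed point $\bth_{0\vp}$ (where $u_{\rm P}(1)$ is simply $o_{\rm P}(1)$) then yields exactly (\ref{asy}).

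For the limiting law I would establish joint asymptotic normality of $\big(\Q_n(\bth_{0\vp}), \N_n(\bth_{0\vp})\big)$, since Proposition~\ref{Prop.asy.linear} records only the two marginals. The term $\N_n(\bth_{0\vp}) = n^{-1/2}\s (\d{\mbf v}_t(\bth_{0\vp})/v_t(\bth_{0\vp}))\,\psi_t$, with $\psi_t := 1 - \eta_t\vp[G(\eta_t)]$ centered by (\ref{eq:IdenCon}) and $\d{\mbf v}_t/v_t$ adapted to the past, is already a martingale-difference sum. For $\Q_n$ I would linearize the empirical quantile process: since $\d{\mu}(x) = xg(x)$, the Bahadur representation $\tilde{G}_n^{-1}(u) - G^{-1}(u) = -\big(\tilde{G}_n(G^{-1}(u)) - u\big)/g(G^{-1}(u)) + o_{\rm P}(n^{-1/2})$ gives $\mu(G^{-1}(u)) - \mu(\tilde{G}_n^{-1}(u)) = G^{-1}(u)\big(\tilde{G}_n(G^{-1}(u)) - u\big) + o_{\rm P}(n^{-1/2})$; replacing $n^{-1}\s \d{\mbf v}_t/v_t$ (all at $\bth_{0\vp}$) by its ergodic limit then produces
\begin{equation*}
\Q_n(\bth_{0\vp}) = {\mathrm E}\!\left(\frac{\d{\mbf v}_1(\bth_{0\vp})}{v_1(\bth_{0\vp})}\right) n^{-1/2}\s \zeta_t + o_{\rm P}(1), \qquad \zeta_t := \int_0^1 G^{-1}(u)\big(I(\eta_t \le G^{-1}(u)) - u\big)\,d\vp(u).
\end{equation*}
Both $\Q_n$ and $\N_n$ are thus, up to $o_{\rm P}(1)$, sums of functionals of the same innovations $\{\eta_t\}$, so a Cram\'er--Wold reduction to a scalar martingale central limit theorem delivers their joint normality.

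The limiting covariance of $\sqrt{n}(\h{\bth}_{n\vp} - \bth_{0\vp})$ then equals $\tfrac{4}{(1 + \rho(\vp))^2}(\J(\bth_{0\vp}))^{-1}\,\Sigma\,(\J(\bth_{0\vp}))^{-1}$, where $\Sigma$ is the limiting covariance matrix of $\Q_n(\bth_{0\vp}) + \N_n(\bth_{0\vp})$ and we use that $\J(\bth_{0\vp})$ is symmetric. Proposition~\ref{Prop.asy.linear} already supplies the variance blocks $\gamma(\vp)\,{\mathrm E}(\d{\mbf v}_1/v_1){\mathrm E}(\d{\mbf v}_1'/v_1)$ and $\sigma^2(\vp)\J(\bth_{0\vp})$, so the only new ingredient is the cross term. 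Because $\psi_t$ is centered and independent of $(\zeta_s, \d{\mbf v}_t/v_t)$ whenever $s \ne t$, only the diagonal $s = t$ survives, and the limit of ${\mathrm E}[\Q_n\N_n']$ equals ${\mathrm E}(\d{\mbf v}_1/v_1)\,{\mathrm E}[\zeta_t\psi_t]\,{\mathrm E}(\d{\mbf v}_1'/v_1)$. Expanding $\psi_t$, using ${\mathrm E}[\eta_t\vp(G(\eta_t))] = 1$ and the substitution $v = G(\eta_t)$ to obtain ${\mathrm E}[I(\eta_t \le G^{-1}(u))\,\eta_t\vp(G(\eta_t))] = \int_0^u G^{-1}(v)\vp(v)\,dv$, one finds ${\mathrm E}[\zeta_t\psi_t] = \lambda(\vp)$ precisely as in (\ref{lam.def}). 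Adding the two symmetric cross terms gives $2\lambda(\vp)\,{\mathrm E}(\d{\mbf v}_1/v_1){\mathrm E}(\d{\mbf v}_1'/v_1)$, and collecting everything reproduces the stated numerator $[4\gamma(\vp) + 8\lambda(\vp)]\,{\mathrm E}(\d{\mbf v}_1/v_1){\mathrm E}(\d{\mbf v}_1'/v_1) + 4\sigma^2(\vp)\J(\bth_{0\vp})$.

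I expect the genuine obstacle to be the joint central limit step, not the one-step algebra, which is routine once Proposition~\ref{Prop.asy.linear} and the discreteness device are in hand. The delicate points are, first, justifying the Bahadur linearization of $\Q_n$ uniformly in $u$ against the integrator $d\vp$, which is only of bounded variation under (A3), so that the remainder is truly $o_{\rm P}(1)$ after multiplication by the ergodic average; and second, verifying the conditional Lindeberg and variance conditions of the martingale central limit theorem for the combined array under only the $2 + \delta$ moment of (A1)(iii). The weak moment assumption is exactly where care is needed, since the products of the weights $\d{\mbf v}_t/v_t$ with the score-transformed innovations must be shown to possess the required uniform integrability.
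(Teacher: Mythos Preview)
Your proposal is correct and follows essentially the same route as the paper's proof: the one-step algebra with the discreteness device of Kreiss (1987) to obtain (\ref{asy}), followed by the Bahadur linearization of $\Q_n(\bth_{0\vp})$ and identification of the cross-covariance as $\lambda(\vp)\,{\mathrm E}(\d{\mbf v}_1/v_1){\mathrm E}(\d{\mbf v}_1'/v_1)$. If anything, you are slightly more explicit than the paper in flagging that \emph{joint} normality of $(\Q_n,\N_n)$ must be established via Cram\'er--Wold and a martingale CLT for the combined array, whereas the paper records the two marginal limits in Proposition~\ref{Prop.asy.linear} and then simply computes the cross term.
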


\subsection{Estimation of $c_{\vp}$}\label{subsec.cvp}

To obtain a root-$n$ consistent estimator of $\bth_0$, we estimate the unknown scalar $c_{\vp}$ under the following mild moment assumption on $X_t$.

\noindent\textbf{Assumption (A4)}. ${\rm E}(X_t^2) < \infty.$\medskip

From Bollerslev (1986), a sufficient condition for the GARCH model to be  second-order stationary (hence to satisfy Assumption (A4)) is $\sum_{i = 1}^p \alpha_{0i} + \sum_{j=1}^q \beta_{0j} < 1.$ In this case,
$$
{\rm E}(X_t^2)=\frac{\omega_0}{1- \sum_{i = 1}^p \alpha_{0i} - \sum_{j=1}^q \beta_{0j}} 
= \frac{c_{\vp}\omega_0}{c_{\vp} - \sum_{i = 1}^p (c_{\vp} \alpha_{0i}) - c_{\vp}\sum_{j=1}^q \beta_{0j}}. 
$$
Using the ergodicity property, ${\rm E}(X_t^2)$ can be estimated from the data by $\overline{X_n^2} := n^{-1} \sum_{t=1}^n X_t^2$.  Also, 
$c_{\vp}\omega_0$ and 
$c_{\vp}\alpha_{0i}$ can be estimated by $\hat{\omega}_{n\vp}$ and $\hat{\alpha}_{n\vp i}$, respectively. Solving the following equation for $c$
$$
\sum_{t=1}^n X_t^2/n = \frac{\hat{\omega}_{n\vp}}{c - \sum_{i = 1}^p \hat{\alpha}_{n\vp i} - c \sum_{j=1}^q \hat{\beta}_{n j}},
$$
we obtain an estimate of $c_{\vp}$ as 
\begin{equation}\label{hatcvp}
\hat{c}_{n\vp} :=  \left(1 -  \sum_{j=1}^q  \hat{\beta}_{nj} \right)^{-1} \left( \hat{\omega}_{n\vp}/\overline{X_n^2}  + \sum_{i = 1}^p  \hat{\alpha}_{n\vp i} \right). 
\end{equation}   
Consequently write $\hat{\bth}_{n\vp}$ in its component-wise form
$$\hat{\bth}_{n\vp} = \left( \hat{\omega}_{n\vp},  \hat{\alpha}_{n\vp 1}, ..., \hat{\alpha}_{n\vp p}, \hat{\beta}_{n1}, ..., \hat{\beta}_{nq} \right)^{'}$$ and let
$$\hat{\bth}_n := \left( \hat{\omega}_{n\vp}/\hat{c}_{n\vp},  \hat{\alpha}_{n\vp 1}/\hat{c}_{n\vp}, ..., \hat{\alpha}_{n\vp p}/\hat{c}_{n\vp}, \hat{\beta}_{n1}, ..., \hat{\beta}_{nq} \right)^{'}.$$
The following theorem states that $\hat{c}_{n\vp}$ is a consistent estimator of $c_{\vp}$ and thus $\hat{\bth}_n$ is a root-$n$ consistent estimator of 
$\bth_0$ with asymptotically normal distribution; see Appendix~\ref{sec.proof} for its proof.

\begin{theorem}\label{thm.asybth0}
Let Assumptions (A1)-(A3) hold. Then, as $n \rightarrow \infty$,
$$\hat{c}_{n\vp} = c_{\vp} + o_{\rm P}(1)$$
and
\begin{equation}\label{asybth0}
\sqrt{n} \left(\h{\bth}_{n} - \bth_{0} \right) = -(1/2 + \rho(\vp)/2)^{-1} (\J(\bth_{0}))^{-1}  (\Q_n(\bth_{0})  + \N_n(\bth_{0}) ) + o_{\rm P}(1).
\end{equation}
Hence as $n \rightarrow \infty$, $\sqrt{n} \left(\h{\bth}_{n} - \bth_{0} \right)$ is normal with mean $\0$ and covariance matrix
$${\boldsymbol \Omega}_{\vp} := (\J(\bth_{0}))^{-1} \frac{\left[ 4\gamma(\vp)  + 8  \lambda(\vp) \right]  {\mathrm E}(\d{\mbf v}_1(\bth_{0})/v_1(\bth_{0})) {\mathrm E}\left( \d{\mbf v}_1^\prime(\bth_{0}) /v_1(\bth_{0}) \right) + 4 \sigma^2(\vp) \J(\bth_{0}) }{(1 + \rho(\vp))^2}  (\J(\bth_{0}))^{-1}.$$

\end{theorem}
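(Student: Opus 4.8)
The plan is to derive Theorem~\ref{thm.asybth0} from the expansion already proved in Theorem~\ref{thm.asy}, using the scale-homogeneity of the GARCH variance function to pass from $\bth_{0\vp}$ to $\bth_0$, and then to control the additional randomness that the scale estimate~(\ref{hatcvp}) injects through $\overline{X_n^2}$.

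First I would settle the consistency statement. The finite-variance condition makes $\overline{X_n^2}={\rm E}(X_t^2)+o_{\rm P}(1)$ by the ergodic theorem, with ${\rm E}(X_t^2)=\omega_0/(1-\sum_i\alpha_{0i}-\sum_j\beta_{0j})$, while Theorem~\ref{thm.asy} gives $\h\omega_{n\vp}=c_\vp\omega_0+o_{\rm P}(1)$, $\h\alpha_{n\vp i}=c_\vp\alpha_{0i}+o_{\rm P}(1)$ and $\h\beta_{nj}=\beta_{0j}+o_{\rm P}(1)$. Substituting these limits into~(\ref{hatcvp}) and applying the continuous mapping theorem, the identity $\omega_0/{\rm E}(X_t^2)=1-\sum_i\alpha_{0i}-\sum_j\beta_{0j}$ reduces the probability limit of $\hat c_{n\vp}$ to $c_\vp$, which is the first claim.

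The structural core is the homogeneity of the variance function: rescaling the intercept $\omega$ and the slopes $\alpha_1,\dots,\alpha_p$ by a common factor $c$ while keeping $\beta_1,\dots,\beta_q$ fixed rescales $\sigma_t^2$, and hence $v_t$, by $c$; this is immediate from the recursion in~(\ref{m2}). Taking $c=c_\vp$ gives $v_t(\bth_{0\vp})=c_\vp v_t(\bth_0)$, and differentiating in $\bth$ gives $\d{\mbf v}_t(\bth_{0\vp})/v_t(\bth_{0\vp})=\D\,\d{\mbf v}_t(\bth_0)/v_t(\bth_0)$, where $\D$ is the diagonal matrix with $c_\vp^{-1}$ in its first $1+p$ entries and $1$ in its last $q$. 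Since $\eta_t$, $G$ and $\mu$ do not depend on $\bth$, this single relation propagates to $\J(\bth_{0\vp})=\D\,\J(\bth_0)\,\D$, $\Q_n(\bth_{0\vp})=\D\,\Q_n(\bth_0)$ and $\N_n(\bth_{0\vp})=\D\,\N_n(\bth_0)$. Feeding these into Theorem~\ref{thm.asy} and using $(\D\J(\bth_0)\D)^{-1}=\D^{-1}\J(\bth_0)^{-1}\D^{-1}$, one checks that the right-hand side of~(\ref{asybth0}) equals $\D\,\sqrt n(\h\bth_{n\vp}-\bth_{0\vp})+o_{\rm P}(1)$, and likewise that ${\boldsymbol\Omega}_\vp=\D\,\bSigma_\vp\,\D$ with $\bSigma_\vp$ the covariance of Theorem~\ref{thm.asy}. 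The theorem therefore reduces to the single identity $\sqrt n(\h\bth_n-\bth_0)=\D\,\sqrt n(\h\bth_{n\vp}-\bth_{0\vp})+o_{\rm P}(1)$.

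In this identity the last $q$ ($\beta$) coordinates agree by construction, so only the first $1+p$ coordinates, obtained by dividing the corresponding entries of $\h\bth_{n\vp}$ by $\hat c_{n\vp}$, need attention. Linearising the ratio $\h\omega_{n\vp}/\hat c_{n\vp}$ about $(c_\vp\omega_0,c_\vp)$ produces the wanted leading term $c_\vp^{-1}(\h\omega_{n\vp}-c_\vp\omega_0)$—exactly the $\D$-scaled entry—plus a remainder $-c_\vp^{-1}\omega_0(\hat c_{n\vp}-c_\vp)$, and similarly for each $\alpha$-coordinate. The hard part will be showing that this remainder is $o_{\rm P}(n^{-1/2})$. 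Linearising~(\ref{hatcvp}) expresses $\hat c_{n\vp}-c_\vp$ through the root-$n$ fluctuations of $\h\bth_{n\vp}$, which are harmless, \emph{and} through $\overline{X_n^2}-{\rm E}(X_t^2)$, which is not: writing $X_t^2=c_\vp v_t(\bth_0)\eta_t^2$, a $\sqrt n$-control of $\overline{X_n^2}-{\rm E}(X_t^2)$, i.e.\ a limit for $n^{-1/2}\sum_t(X_t^2-{\rm E}(X_t^2))$, would require a finite fourth moment of $\eta_t$, whereas~(A1)(iii) supplies only $2+\delta$ moments. Demonstrating that this empirical-moment term is asymptotically negligible at the $n^{-1/2}$ scale—via a joint treatment of $(\h\bth_{n\vp},\overline{X_n^2})$ that folds it into the $u_{\rm P}(1)$ remainder—is the decisive and most delicate step; by comparison the scaling bookkeeping of the previous paragraph is routine.
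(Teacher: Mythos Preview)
Your scaling argument via the diagonal matrix $\D$ and the homogeneity $v_t(\bth_{0\vp})=c_\vp v_t(\bth_0)$ is exactly the paper's route (it writes $\A_\vp$ for your $\D$), and your consistency argument for $\hat c_{n\vp}$ matches the paper's almost verbatim.

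Where you go further is in isolating the remainder $-\theta_{0,k}\,c_\vp^{-1}\sqrt n\,(\hat c_{n\vp}-c_\vp)$ in the first $1+p$ coordinates and tracing it, through~(\ref{hatcvp}), back to $\sqrt n\,(\overline{X_n^2}-{\rm E}X_t^2)$. The paper never confronts this: its proof writes the bare identity ``${\bth}_n=\A_\vp{\bth}_{n\vp}$''---which would only hold with $c_\vp$ in place of $\hat c_{n\vp}$---and then asserts that (\ref{asybth0}) ``follows from (\ref{asy}) and consistency of $\hat c_{n\vp}$''. Consistency delivers $o_{\rm P}(1)$, not the $o_{\rm P}(n^{-1/2})$ you correctly see is needed, so the difficulty you flag is not resolved anywhere in the paper. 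Your proposed remedy, a joint treatment that absorbs the $\overline{X_n^2}$ fluctuation into the $u_{\rm P}(1)$ remainder, cannot succeed under (A1)(iii) and (A4) alone: a $\sqrt n$-rate for $\overline{X_n^2}$ would require ${\rm E}X_t^4<\infty$ and hence ${\rm E}\epsilon_t^4<\infty$, which is precisely the moment condition the R-estimation programme is designed to avoid, and there is no cancellation across coordinates because each of the first $1+p$ entries carries a nonzero multiple of $\hat c_{n\vp}-c_\vp$ with the same sign. In short, you have not overlooked a hidden step; the paper's own argument leaves this point open.
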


Note that by assuming (A4) and following similar approach as in the proof of Theorem \ref{thm.asybth0}, one can also obtain consistent estimators of unknown scalars for robust estimators in GARCH models (e.g., M-estimators of Liu and Mukherjee (2020) and R-estimator of Andrews (2012)). For illustration, recall that the R-estimator of Andrews (2012) is root-$n$ consistent estimator of 
$$(\alpha_{01}/\omega_0, ..., \alpha_{0p}/\omega_0, \beta_{01}, ..., \beta_{0q})^\prime$$
where $\omega_0$ is the intercept parameter that plays the role of the unknown scalar. Denoting the R-estimator of Andrews (2012) by 
$$
\hat{\bth}_{\text{BA};n} := (\hat{a}_{\text{BA}; n1}, ..., \hat{a}_{\text{BA}; np}, \hat{\beta}_{\text{BA}; n1}, ..., \hat{\beta}_{\text{BA}; nq})^\prime, $$
a consistent estimator of $\omega_0$ can be obtained following similar approach as in the proof of Theorem \ref{thm.asybth0} as 
$$\hat{\omega}_{\text{BA};n} := \left( 1 + \sum_{i=1}^p \hat{a}_{\text{BA}; ni}  \overline{X_n^2} \right)^{-1}  \left(1 -  \sum_{j=1}^q  \hat{\beta}_{\text{BA}; nj} \right) \overline{X_n^2}.$$

We remark that from Theorem \ref{thm.asybth0}, the asymptotic covariance matrix of $\h{\bth}_{n}$ has a complicated form. Hence we consider bootstrap methods in Section \ref{sec.Boot} to approximate the limit distribution of $\sqrt{n} \left(\h{\bth}_{n} - \bth_{0} \right)$.

%Notice that  similar to the M-estimator for the GARCH model, $\h{\bth}_{n\vp}$ turns out to be  an estimator of transformed parameter $\bth_{0\vp}$ defined in (\ref{bth0vp}), where the factor $c_\vp$ is unknown since the distribution of $\epsilon_t$ is unknown. Although $c_\vp$ is unknown, one can still apply the R-estimator to get a consistent estimate of some important quantity in financial risk management such as the Value at Risk (VaR) when the returns are modeled as GARCH~($p, q$). This is because $c_\vp$ is cancelled in the estimation process. See, e.g., Iqbal and Mukherjee~(2010) for how to estimate the VaR based on the GARCH model. 

\subsection{Examples of the score functions}\label{sec.exam}

Below we cite examples of three commonly-used R-scores; for similar examples of scores in other models, see Mukherjee~(2007) and Hallin and La Vecchia~(2017).

\textbf{Example 1} (sign score). Let $\vp(u)= \text{sign}(u-1/2)$. Then for symmetric innovation distribution, $c_\vp=({\mathrm E}|\epsilon|)^2,$ which coincides with the scale factor of the LAD estimator in Mukherjee~(2008). Therefore, the sign R-estimator is expected to be close to the LAD estimator. This is demonstrated later in the real data analysis.

\textbf{Example 2} (Wilcoxon score). Let $\vp(u)= u-1/2$ so that the range of $\vp(u)$ is symmetric. 

\textbf{Example 3} (van der Waerden (vdW) or normal score). One might also set $\vp(u) = \Phi^{-1}(u),$ with $\Phi(\cdot)$ denoting the c.d.f. of the standard normal distribution. Notice that unlike the sign and Wilcoxon score, the vdW score is not bounded as $u \rightarrow 0$ and $u \rightarrow 1$. It thus does not satisfy Assumption (A3). However, an approximating sequence of bounded score functions of $\vp$ on $(0, 1)$ can be constructed as in Andrews~(2012). For example, letting 
$$\vp_{m}(u) := \Phi^{-1}(u) I(1/m \leq u \leq 1 - 1/m) + \Phi^{-1}(1/m) I(0<u< 1/m) + \Phi^{-1}(1-1/m) I(u>1-1/m)$$ with $m>2$, then $\vp_{m}(u)$ satisfies Assumption (A3) and converges pointwise to the vdW score on $(0, 1)$.
%Therefore, the asymptotic efficiencies of the R-estimator based on such score and vdW score are arbitrarily close. 
It is demonstrated later using both real data analysis and extensive simulation that the vdW has superior performance compared with the QMLE.

We now provide heuristics for the definition of the R-estimator in (\ref{R}). 
When the underlying error distribution is known, one can obtain efficient R-estimator by choosing the score function as 
$\vp(u) = -\d{f}(F^{-1}(u))/f(F^{-1}(u))$. 
Since for large $n$, the empirical distribution function $R_{nt}(\bth_{0\vp})/(n+1)$ of $\{\epsilon_j; 1 \le j \le n\}$ evaluated at $\epsilon_t$ is close to $F(\epsilon_t)$, we have 
$$
\vp\left[\frac{R_{nt}(\bth)}{n+1}\right]\frac{X_t}{v_t^{1/2}(\bth)}
\approx H^*\left[ \frac{X_t}{v_t^{1/2}(\bth)} \right].
$$
Therefore, the criteria function of the R-estimator gets close to the MLE which is efficient. This leads to the choice of the vdW, sign and Wilcoxon under the normal, double exponential (DE) and logistic distributions, respectively. This is observed later in simulation study of the R-estimator. 
%Under the $t(\nu)$ distribution, with $\nu$ being the degree of freedom, the corresponding R-score is
%$\vp(u) = (\nu + 1) F^{-1}(u)/(\nu + (F^{-1}(u))^2).$

\subsection{Computational aspects}\label{com.asp}
Here we discuss some key computational aspects and propose an algorithm to compute $\h{\bth}_{n\vp}$ and $\h{\bth}_{n}$.

First, since $c_{\vp}$ depends on the unknown density $f$, it is difficult to have a $\sqrt{n}$-consistent initial estimator  $\bar{\bth}_n$ of $\bth_{0\vp}$. However, due to finite sample size in practice, the one-step procedure is usually iterated a number of times, taking $\h{\bth}_{n\vp}$ as the new initial estimate, until it stabilizes numerically. This iteration process would mitigate the impact of different initial estimates; see van der Vaart (1998, Section~5.7) and Hallin and La Vecchia~(2017) for similar comments. In fact, we observed during our extensive simulation study that irrespective of the choice of the QMLE, LAD or $\bth_0$ as initial estimates, only few iterations result in the same estimates.

Second, to compute $\h{\bth}_{n\vp}$ of (\ref{onestep.def}), we need $\h{\bUpsilon}_n$ which is a consistent estimator of $(1/2 + \rho(\vp)/2)\J(\bth_{0\vp})$. The matrix 
$\J(\bth_{0\vp})$ can be consistently estimated by 
$$\h{\J}_n(\bar{\bth}_n) := n^{-1} \sum_{t=1}^n \{\dot{\h{\mbf v}}_t( \bar{\bth}_n) \dot{\h{\mbf v}}_t^\prime ( \bar{\bth}_n)/\h{v}_t^2 ( \bar{\bth}_n)\}.$$
For estimating $\rho(\vp)$ which is a function of the density $g$, we can use the asymptotic linearity in (\ref{Prop.asy.lin1}). Here with an arbitrarily chosen $\bb$, we can substitute $ \bar{\bth}_n$ for $\bth_{0\vp}$ and then solve the equation for $\rho(\vp)$ based on (\ref{Prop.asy.lin1}). A more delicate approach for estimating $\rho(\vp)$ can be found in  Cassart et al.~(2010) and Hallin and La Vecchia~(2017, Appendix~C). Based on our  extensive simulation study and real data analysis, it appears that different values of $\rho(\vp)$ would finally lead to same estimate after some iterations. 
%The reason for this is that the solutions of the minimizations of $||n^{-1/2} \left( \h{\bUpsilon}_n \right)^{-1} \h{\mbf R}_n (\bth)||$  and $c ||n^{-1/2} \left( \h{\bUpsilon}_n \right)^{-1} \h{\mbf R}_n ( \bth)||$ are the same for a $c>0$. 
Consequently, we set $\rho(\vp)=1$ during the computation which is the value corresponding to the vdW score under the normal distribution.

%Alternatively, by noticing that the solutions of the minimizations of $||n^{-1/2} \left( \h{\bUpsilon}_n \right)^{-1} \h{\mbf R}_n (\bth)||$  and $c ||n^{-1/2} \left( \h{\bUpsilon}_n \right)^{-1} \h{\mbf R}_n ( \bth)||$ are the same for a $c>0$, one might simple use $\rho(\vp)=1$, which is the value corresponds to the vdW score under the normal distribution. 
In summary, we propose the following iterative Algorithm~\ref{algRestGARCH} to compute $\h{\bth}_{n\vp}$, with which we can obtain $\hat{c}_{\vp}$ using \eqref{hatcvp} and hence $\h{\bth}_n$. Codes are available upon request.

%\begin{equation}
%\begin{split}
%\h{\bth}_{(r+1)} &=\h{\bth}_{(r)}-\left[\sum_{t=1}^n \frac{\d{\h{\mbf v}}_t(\h{\bth}_{(r)})\d{\h{\mbf v}}_t^{'}(\h{\bth}_{(r)})}{\h{v}_t^2(\tilde{\bth}_{(r)})}\right]^{-1} \\
% & \quad \quad \quad \quad \times\left\lbrace\sum_{t=1}^n \frac{\d{\h{\mbf v}}_t(\h{\bth}_{(r)})}{\h{v}_t(\tilde{\bth}_{(r)})}\left[1-\varphi\left(\frac{R_{nt}(\h{\bth}_{(r)})}{n+1}\right)\frac{X_t}{\h{v}_t^{1/2}(\h{\bth}_{(r)})}\right]\right\rbrace, \quad \text{for} \quad r = 0, 1, ..., 
%\end{split} 
%\label{eq:formula}
%\end{equation}
%with $\h{\bth}_{(0)} =  \bar{\bth}_n$ being the initial estimator. For the purpose of ease-of-implementation, below we provide a detailed description of our algorithm.

\begin{algorithm}[!htbp]
\SetAlgoLined\vspace{3mm}

\KwIn{a sample $\{X_t; 1 \leq t \leq n\}$, orders $p$ and $q$ of the GARCH process, number $k$ of iterations in the one-step procedure.}
\KwOut{R-estimator $\hat{\bth}_n$}
\begin{enumerate}

\item Compute a preliminary root-$n$ consistent estimator $\bar{\bth}_n$ and set $\hat{\bth}_{n\vp} = \bar{\bth}_n$.
 
\item \For{$i \gets 1$ \textbf{to} $k$}{

\begin{equation}
\begin{split}
\h{\bth}_{n\vp} &\gets \h{\bth}_{n\vp}-\left[\sum_{t=1}^n \frac{\d{\h{\mbf v}}_t(\h{\bth}_{n\vp})\d{\h{\mbf v}}_t^{'}(\h{\bth}_{n\vp})}{\h{v}_t^2(\tilde{\bth}_{n\vp})}\right]^{-1} \\
 & \quad \quad \quad \quad \times\left\lbrace\sum_{t=1}^n \frac{\d{\h{\mbf v}}_t(\h{\bth}_{n\vp})}{\h{v}_t(\tilde{\bth}_{n\vp})}\left[1-\varphi\left(\frac{R_{nt}(\h{\bth}_{n\vp})}{n+1}\right)\frac{X_t}{\h{v}_t^{1/2}(\h{\bth}_{n\vp})}\right]\right\rbrace
\end{split} 
\label{eq:formula}
\end{equation}
}

\item Compute  $\hat{c}_{\vp}$ using \eqref{hatcvp} and then $\h{\bth}_n$.

\end{enumerate}

\caption{R-estimation for GARCH models}\label{algRestGARCH}
\end{algorithm}

\subsection{Asymptotic relative efficiency}\label{sec.ARE}
In the linear regression and autoregressive models, the asymptotic relative efficiency (ARE) of the R-estimators with respect to (wrt) the least squares estimator is high for a wide array of error distributions. For the GARCH model, we compare the ARE of the R-estimator wrt the QMLE based on Theorem~\ref{thm.asybth0}. 

%Define a diagonal matrix of order $(1+p+q) \times  (1+p+q)$ by  
%$$\A := \text{diag}(\underbrace{c_\vp^{-1}, ..., c_\vp^{-1}}_{1+p} \underbrace{1, ..., 1}_{q}).$$ 
%Then $\A \h{\bth}_{n\vp}$ is a $\sqrt{n}$-consistent estimator of $\bth_0$ for all score functions $\vp$. 
%Using the forms of $\{c_j (\bth); j \ge 0\}$ in (3.1) of Berkes et al.~(2003),  
%$$v_t(\A^{-1} \bth) = c_\vp v_t(\bth), \quad \d{\mbf v}_t(\A^{-1} \bth) = c_\vp \A \d{\mbf v}_t(\bth).$$ 
%Since $\bth_{0\vp} = \A^{-1} \bth_0$ with $\J_0 := {\mathrm E}[ \d{\mbf v}_t(\bth_0) \d{\mbf v}_t^{'}(\bth_0)/v_t^2(\bth_0)]$,
%we have
%$$
%\frac{\d{\mbf v}_t(\bth_{0\vp})}{v_t(\bth_{0\vp})} = \A  \frac{\d{\mbf v}_t(\bth_0)}{v_t(\bth_0)}, \quad \J = \A  \J_0 \A.
%$$
%Thus Theorem \ref{thm.asy} implies that as $n\rightarrow \infty$, $n^{1/2} \left(\A \h{\bth}_{n\vp} - \bth_0 \right)$ converges to the normal distribution with mean $\0$ and covariance matrix
%$$\C_1 := \J_0^{-1} \frac{\left[ 4\gamma(\vp)  + 8  \lambda(\vp) \right]  {\mathrm E}(\d{\mbf v}_1 (\bth_0)/v_1(\bth_0)) {\mathrm E}\left( \d{\mbf v}_1^\prime(\bth_0)/v_1 (\bth_0) \right) + 4 \sigma^2(\vp) \J_0 }{(1 + \rho(\vp))^2} \J_0^{-1}.$$
%
%For the QMLE $\h{\bth}_{\text{QMLE}}$, we can derive a similar result as follows.

%Define a $(1+p+q) \times  (1+p+q)$ diagonal matrix 
%$\B = \text{diag}(\underbrace{({\mathrm E}\epsilon^2)^{-1}, .., ({\mathrm E}\epsilon^2)^{-1}}_{1+p} \underbrace{1, ..., 1}_{q})$. 

Note that under assumption ${\mathrm E}\epsilon^4 < \infty$, the QMLE $\h{\bth}_{\text{QMLE}}$ is asymptotic normal with mean zero and covariance matrix 
${\boldsymbol \Omega}_{\text{QMLE}} = ({\mathrm E}\epsilon^4 - 1) (\J(\bth_0))^{-1}.$
Hence, in view of Theorem~\ref{thm.asybth0},  the ARE of the R-estimator wrt the QMLE is 
\begin{align}
& {\boldsymbol \Omega}_{\vp}^{-1} {\boldsymbol \Omega}_{\text{QMLE}} =\J(\bth_0) \left\lbrace \left[ 4\gamma(\vp)  + 8  \lambda(\vp) \right]  {\mathrm E}\left( \frac{\d{\mbf v}_1(\bth_0)}{v_1(\bth_0)} \right) {\mathrm E}\left( \frac{\d{\mbf v}_1^\prime (\bth_0)}{v_1 (\bth_0)} \right)
 + 4 \sigma^2(\vp) \J(\bth_0) \right\rbrace^{-1} \nonumber  \\
 & \qquad \quad \qquad \times (1 + \rho(\vp))^2 ({\rm  E} \epsilon^4  - 1).  \label{ARE.cov}
\end{align}
For the sign R-estimator, $\gamma(\vp)$, $\lambda(\vp)$ and $\rho(\vp)$ are all zeros. Hence ${\boldsymbol \Omega}_{\vp}^{-1} {\boldsymbol \Omega}_{\text{QMLE}}$ reduces to 
$$({\mathrm E}\epsilon^4 - 1)/(4 \sigma^2(\vp)) \I_{1+p+q},$$
where $\I_{1+p+q}$ is the $(1+p+q)\times (1+p+q)$ identity matrix. Consequently, the ARE of the sign R-estimator wrt the QMLE 
equals $({\mathrm E}\epsilon^4 - 1)/(4 \sigma^2(\vp)),$
which is $0.876$ under the normal distribution. This corresponds to the classical result of the ARE of the mean absolute deviation wrt the mean square deviation; see, e.g., Huber and Ronchetti~(2011, Chapter~1).

For the vdW and Wilcoxon R-estimators, the AREs are more difficult to calculate since $\gamma(\vp)$ and $\lambda(\vp)$ are non-zero. However, in the following simulation study in Table~\ref{simulation1}, the estimated AREs reveal that the vdW R-estimator, compared with the QMLE, does not lose any efficiency which is a reflection of the well-known Chernoff-Savage phenomenon in the literature on the R-estimation in linear models.

\section{Real data analysis and simulation results}\label{sec.num.R.GARCH}
This section examines the performance of the R-estimators and compare them with the QMLE by analysing three financial time series and by carrying out extensive Monte Carlo simulation.

\subsection{Real data analysis}\label{sec.real}
In this section we fit GARCH~($1, 1$) model to three financial time series and compare the proposed three R-estimators with the 
M-estimators QMLE and LAD discussed in Mukherjee~(2008), where the unknown scalar of the LAD can also be estimated by \eqref{hatcvp}.

In an earlier work, Muler and Yohai~(2008) fitted the the GARCH~($1, 1$) model to the Electric Fuel Corporation (EFCX) time series for the period of January 2000 to December 2001 with sample size $n = 498$. The parameters of the model are estimated by 
M-estimators based on various score functions. It turned out that the M-estimates of the parameter $\beta$ differ widely depending on the score functions and so it is difficult to assess which estimate should be relied on in similar situations. Here we compare various M-estimates and R-estimates of the GARCH~($1, 1$) parameters for the EFCX series again shedding light on which could be some possible reasons for the difference in estimates and finally which estimation methods can be relied upon. We also compare M-estimates of the GARCH~($1, 1$) parameters when fitted to two other dataset, namely, the S\&P 500 stock index from June 2013 to May 2017 with $n = 1005$ and the GBP/USD exchange rate from June 2013 to May 2017 with $n = 998$ to illustrate that the M- and R-estimates do not differ widely when the underlying theoretical assumptions hold in general.  

In Table \ref{realdata}, we report the QMLE computed using the {\tt fGarch} package in R program, the M-estimates QMLE and LAD and the 
R-estimates proposed in Examples 1-3 of Section~\ref{sec.exam}. For the EFCX data, the R-estimates 
 for all score functions are quite close to the LAD estimate, but they are very different than the QMLE. On the contrary, for the S\&P 500 and GBP/USD data, M- and R-estimates  are close to each other. 
 
% We can also find that the LAD estimates of all parameters are quite close to the sign estimates and this is consistent with the discussion in Example 1 of Section~\ref{sec.exam}. Note also that for $\omega$ and  $\alpha$, the R-estimates are quite different since $c_\vp$'s of these scores have different values.
\begin{table}
\caption{The QMLE, LAD and R-estimates (sign, Wilcoxon and vdW) of the GARCH~(1, 1) parameter for the EFCX, S\&P 500 and GBP/USD data.} 
\label{realdata}
\begin{center}
\begin{tabular}{c c c c c c c }
\hline
  & {\tt fGarch} & QMLE & LAD & sign & Wilcoxon & vdW \\
\hline
\textbf{EFCX} & & &&&&\\
$\omega$ & 1.89$\times 10^{-4}$ & 6.28$\times 10^{-4}$ & 1.22$\times 10^{-3}$ & 1.23$\times 10^{-3}$ & 1.22$\times 10^{-3}$ & 1.24$\times 10^{-3}$ \\
$\alpha$ & 0.05     & 0.07     & 0.17     & 0.17     & 0.16     & 0.13     \\
$\beta$  & 0.92     & 0.84     & 0.66     & 0.65     & 0.67     & 0.69     \\ \hline
\textbf{S\&P 500} &          &          &          &          &          &          \\
$\omega$ & 6.50$\times 10^{-6}$ & 7.02$\times 10^{-6}$ & 5.31$\times 10^{-6}$ & 5.32$\times 10^{-6}$ & 5.32$\times 10^{-6}$ & 6.19$\times 10^{-6}$ \\
$\alpha$ & 0.18     & 0.18     & 0.19     & 0.19     & 0.19     & 0.18     \\
$\beta$  & 0.72     & 0.70     & 0.73     & 0.73     & 0.73     & 0.72     \\ \hline
\textbf{GBP/USD}  &          &          &          &          &          &          \\
$\omega$ & 5.32$\times 10^{-7}$ & 1.02$\times 10^{-6}$ & 6.83$\times 10^{-7}$ & 6.76$\times 10^{-7}$ & 7.29$\times 10^{-7}$ & 9.10$\times 10^{-7}$ \\
$\alpha$ & 0.12     & 0.13     & 0.07     & 0.07     & 0.07     & 0.09     \\
$\beta$  & 0.88     & 0.85     & 0.91     & 0.91     & 0.91     & 0.89 \\ \hline
\end{tabular}
\end{center}
\end{table}

To investigate why the QMLE  is different from the other R-estimates and LAD for the EFCX data, we check the assumption ${\mathrm E}\epsilon^4 < \infty$ for this data by using the QQ-plots of the residuals (based on the vdW
R-estimates) against $t$ distributions. We consider the vdW score only because the R-estimates based on two other score functions and the LAD are close to the vdW estimates. For comparison, we have also provided QQ-plots for the 
S\&P 500 data. The main idea behind the QQ-plots of the residuals against the $t(d)$ distribution is simple: Supposing
$\epsilon \sim t(d)$ distribution, then ${\mathrm E}|\epsilon|^\nu < \infty$ if and only if $\nu < d$. Hence, residuals with heavier tail than the $t(d)$ distribution correspond to the errors with the infinite $d$-th moment while those with thinner tail than the $t(d)$ distribution have the finite $d$-th error moment.

The top-left panel of Figure 1 shows the QQ-plot of the residuals against the $t(4.01)$ distribution for the EFCX data. The residuals have heavier right tail than the $t(4.01)$ distribution which implies that the fourth moment of the error term may not exist. On the other hand, the QQ-plot against the $t(3.01)$ distribution reveals lighter tail as shown at the bottom-left panel of Figure 1 and this implies that ${\mathrm E}|\epsilon|^3 < \infty$.
                  
For the S\&P 500 data, the QQ-plot against $t(4.01)$ distribution at the top-right panel of Figure 1 shows that the residuals have lighter tails than $t(4.01)$ distribution. For the QQ-plot against $t(6.01)$ distribution, as shown at the bottom-right panel of Figure 1, the residuals fit the distribution better. Therefore, we may conclude that ${\mathrm E}|\epsilon|^4 < \infty$ holds for the S\&P 500 data.

\begin{figure}[!htbp]
\begin{center}
\caption{QQ-plots of the residuals against $t$-distributions for the EFCX (left column) and S\&P 500 data (right column); the residuals are obtained by using the vdW R-estimator.}
\includegraphics[width=6in]{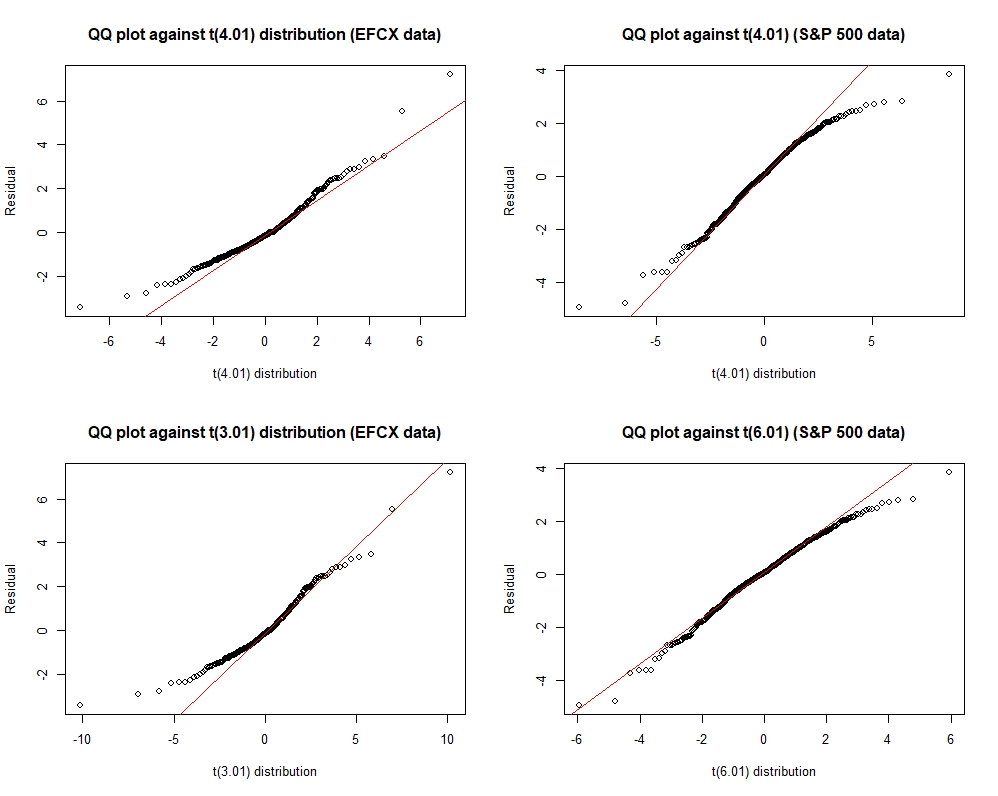}
\end{center}
\label{fig:QQplot}
\end{figure} \vspace{-2mm}

\subsection{Simulation study of the R-estimators}\label{sim.Rest}
We now evaluate the performance of the R-estimators based on simulated data from various error distributions. Apart from the GARCH~($1, 1$) model we consider the GARCH~($2, 1$) model also as the computation for higher order models are not considered frequently in the literature. Let $R$ denote the number of replications and
$\h{\bth}_{ni} = (\h{\omega}_i, \h{\alpha}_{i1}, ..., \h{\alpha}_{ip}, \h{\beta}_{i1}, ..., \h{\beta}_{iq})^\prime$ denote the 
R-estimator computed from the $i$-th data, $1 \le i \le n$. We throughout compare the R-estimators with the QMLE by using the averaged bias and MSE.
%Note that $\h{\bth}_{ni}$ is an estimator of $\bth_{0\vp}$, which depends on the score function used in the estimation. To compare R-estimates based on different score functions fairly, we consider the standardized bias defined by 
%\begin{equation*}
%\frac{1}{R}\sum_{i=1}^R\left(\frac{\h{\omega}_i}{c_{\vp}}-\omega_0, \frac{\h{\alpha}_{i1}}{c_{\vp}}-\alpha_{01}, ..., \frac{\h{\alpha}_{ip}}{c_{\vp}}-\alpha_{0p}, \h{\beta}_{i1}-\beta_{01}, ..., \h{\beta}_{iq}-\beta_{0q}\right)^\prime,
%\end{equation*}
%and the standardized MSE defined by
%\begin{equation*}
%\frac{1}{R}\sum_{i=1}^R \left(\left(\frac{\h{\omega}_i}{c_{\vp}}-\omega_0\right)^2, \left(\frac{\h{\alpha}_{i1}}{c_{\vp}}-\alpha_{01}\right)^2, ..., \left(\frac{\h{\alpha}_{ip}}{c_{\vp}}-\alpha_{0p}\right)^2, \left(\h{\beta}_{i1}-\beta_{01}\right)^2, ..., \left(\h{\beta}_{iq}-\beta_{0q}\right)^2\right)^\prime.
%\end{equation*}
We also compare the relative efficiency of the R-estimators wrt the QMLE under a finite sample size, as an estimate of the ARE, by using the formula
$$\widehat{\text{ARE}}_{\text{R/QMLE}} = \widehat{\text{MSE}}_{\text{QMLE}}/ \widehat{\text{MSE}}_{\text{R}}.$$

\textbf{Simulation for the GARCH~($1, 1$) model.}
Here we run simulation with $R = 500, n = 1000$ and $\bth_0 = (6.50\times 10^{-6}, 0.177, 0.716)^\prime,$ where our choice of
$\bth_0$ is motivated by the estimate given by the {\tt fGarch} for the S\&P 500 data in Table 1. The estimates of the bias and MSE of the R-estimators and QMLE under various error distributions are reported in Table 
\ref{simulation1}, where the estimates of the ARE are shown in the parentheses. Notice that under  $t(3)$ distribution, the QMLE does not converge for many replications, while the R-estimators always converge. Therefore, the bias and MSE are obtained using the replications where the QMLE converges.

It is worth noting that the vdW achieves almost the same efficiency as the QMLE under the normal distribution, and the vdW is more efficient under heavier-tailed distributions.  In general, the sign score is most efficient under the DE and $t(3)$ distributions, while the Wilcoxon score is optimal under the logistic distribution. Under the $t(3)$  distribution with infinite fourth moment, the R-estimators yield smaller bias and significantly smaller MSE than the QMLE.

To strengthen the point that the R-estimators behave better than the QMLE under a heavy-tailed distribution, we have reported simulation results for larger sample sizes $n = 3000$ and $n = 5000$ under $t(3)$ distribution in Table \ref{sim.diff.n}. The QMLE failed to converge for large sample size; for example, with 
$n = 5000$ around 8\% replications do not converge. From Table \ref{sim.diff.n}, when $n$ increases, the  performance of the R-estimators becomes even better in terms of both the bias and MSE.

Overall, the vdW dominates the QMLE and other R-estimators sacrifice only small efficiency under the normal error distribution while they all achieve much higher efficiency when tails become much heavier. This provides a strong support for using the R-estimators. 

\begin{table}[!htbp]
\caption{The estimates of the standardized bias, MSE and ARE of  the R-estimators (sign, Wilcoxon and vdW) and the QMLE for the GARCH~($1, 1$) model under various error distributions with sample size $n = 1000$ based on $N = 500$ replications.}\vspace{-3mm}
\label{simulation1}
\begin{center}
\scriptsize  
\begin{tabular}{c c c c c c c c}\hline 
&\multicolumn{3}{c}{\textbf{Standardized bias}} &&\multicolumn{3}{c}{\textbf{Standardized MSE and ARE}} \\ \cline{2-4} \cline{6-8} & $\omega$ &$\alpha$&  $\beta$&  &$\omega$ &$\alpha$ &$\beta$ \\ \hline
\textbf{Normal} &&&&&&&  \\ 
QMLE	 & 8.96$\times 10^{-7}$	& -4.42$\times 10^{-4}$	& -1.54$\times 10^{-2}$	&&	6.45$\times 10^{-12}$&	1.41$\times 10^{-3}$&	4.14$\times 10^{-3}$ \\ 
Sign& 9.30$\times 10^{-7}$&	1.74$\times 10^{-3}$&	-1.54$\times 10^{-2}$&&		8.39$\times 10^{-12}$ (0.77)&	1.62$\times 10^{-3}$ (0.87)&	5.16$\times 10^{-3}$ (0.80) \\
Wilcoxon&	1.02$\times 10^{-6}$&	3.09$\times 10^{-3}$&	-1.61$\times 10^{-2}$&&	8.52$\times 10^{-12}$ (0.76)&	1.54$\times 10^{-3}$ (0.91)&	4.93$\times 10^{-3}$ (0.84) \\ 
vdW	& 9.05$\times 10^{-7}$ & 4.55$\times 10^{-4}$ &	-1.55$\times 10^{-2}$	&& 6.44$\times 10^{-12}$ (1.00) &	1.43$\times 10^{-3}$ (0.98)&	4.15$\times 10^{-3}$ (1.00)\\ \hline

\textbf{DE} &&&&&&&  \\ 
QMLE	& 1.02$\times 10^{-6}$&	3.56$\times 10^{-3}$&-2.26$\times 10^{-2}$&&	8.60$\times 10^{-12}$&	2.37$\times 10^{-3}$&	6.29$\times 10^{-3}$\\
Sign&	5.82$\times 10^{-7}$&	-3.42$\times 10^{-3}$&	-1.69$\times 10^{-2}$&&	6.22$\times 10^{-12}$ (1.38)&	1.74$\times 10^{-3}$ 	(1.36)&	5.15$\times 10^{-3}$ (1.22)\\
Wilcoxon&	6.24$\times 10^{-7}$&	-2.93$\times 10^{-3}$&	-1.74$\times 10^{-2}$&&	6.34$\times 10^{-12}$ (1.36)&	1.76$\times 10^{-3}$ (1.35) &	5.12$\times 10^{-3}$ (1.23) \\
vdW	&6.22$\times 10^{-7}$&	-4.13$\times 10^{-3}$&	-1.96$\times 10^{-2}$	&&	6.51$\times 10^{-12}$ (1.32) &	1.88$\times 10^{-3}$ (1.26)&	5.45$\times 10^{-3}$ (1.15)\\ \hline

\textbf{Logistic} &&&&&&& \\ 
QMLE& 1.05$\times 10^{-6}$&	2.51$\times 10^{-3}$& -1.51$\times 10^{-2}$	&&	7.44$\times 10^{-12}$&	1.63$\times 10^{-3}$&	4.28$\times 10^{-3}$\\
Sign&	6.85$\times 10^{-7}$&	-2.65$\times 10^{-3}$&	-1.17$\times 10^{-2}$&&		5.40$\times 10^{-12}$ (1.38) & 1.42$\times 10^{-3}$ (1.15) &	3.66$\times 10^{-3}$ (1.17)\\
Wilcoxon&	6.82$\times 10^{-7}$&	-2.91$\times 10^{-3}$&	-1.19$\times 10^{-2}$&&		5.24$\times 10^{-12}$ (1.42)&	1.38$\times 10^{-3}$ (1.18) &	3.56$\times 10^{-3}$ (1.20) \\
vdW	&7.06$\times 10^{-7}$&	-3.80$\times 10^{-3}$&	-1.34$\times 10^{-2}$&&		5.66$\times 10^{-12}$ (1.31)&	1.42$\times 10^{-3}$ (1.14)&	3.83$\times 10^{-3}$ (1.12)\\  \hline

${\boldsymbol {t(3)}}$ &&&&&&&\\ 
QMLE&	9.96$\times 10^{-7}$&	2.99$\times 10^{-2}$&	-5.46$\times 10^{-2}$&& 2.53$\times 10^{-11}$&	2.74$\times 10^{-2}$&	2.81$\times 10^{-2}$\\
Sign&	4.33$\times 10^{-7}$&	4.82$\times 10^{-3}$&	-1.80$\times 10^{-2}$&&		6.78$\times 10^{-12}$ (3.73)&	3.72$\times 10^{-3}$ 	(7.37)&	7.73$\times 10^{-3}$ (3.64)\\
Wilcoxon&	4.15$\times 10^{-7}$&	4.41$\times 10^{-3}$&	-1.83$\times 10^{-2}$&&		7.10$\times 10^{-12}$ (3.57) &	3.86$\times 10^{-3}$ (7.10) &	8.18$\times 10^{-3}$ (3.44)\\
vdW&	3.92$\times 10^{-7}$&	3.77$\times 10^{-3}$&	-2.57$\times 10^{-2}$&&		9.38$\times 10^{-12}$ (2.70)&	5.33$\times 10^{-3}$ (5.14) &	1.14$\times 10^{-2}$ (2.47)\\ \hline   
\end{tabular}
\end{center}
\end{table}

\begin{table}[!htbp]
\caption{The estimates of the bias, MSE and ARE of  the R-estimators (sign, Wilcoxon and vdW) and the QMLE for the GARCH~($1, 1$) model under the $t(3)$ error distribution with larger sample sizes $n = 3000, 5000$ based on $R = 500$ replications.}\vspace{-3mm}
\label{sim.diff.n}
\begin{center}
\scriptsize  
\begin{tabular}{cccccccc}
\hline
                  & \multicolumn{3}{c}{\textbf{Bias}} &  & \multicolumn{3}{c}{\textbf{MSE and ARE}} \\ \hline
                  & $\omega$       & $\alpha$      & $\beta$       &  & $\omega$         & $\alpha$         & $\beta$         \\ \hline
\textbf{n = 3000} &                &               &               &  &                  &                  &                 \\
QMLE              & 6.34$\times 10^{-7}$       & 1.80$\times 10^{-2}$      & -3.48$\times 10^{-2}$     &  & 1.14$\times 10^{-11}$         & 1.61$\times 10^{-2}$         & 1.25$\times 10^{-2}$        \\
Sign              & 1.52$\times 10^{-7}$       & 1.46$\times 10^{-3}$      & -9.99$\times 10^{-3}$     &  & 1.65$\times 10^{-12}$ (6.89)  & 1.29$\times 10^{-3}$ (12.47) & 2.10$\times 10^{-3}$ (5.93) \\
Wilcoxon          & 1.61$\times 10^{-7}$       & 1.47$\times 10^{-3}$      & -1.03$\times 10^{-2}$     &  & 1.76$\times 10^{-12}$ (6.46)  & 1.35$\times 10^{-3}$ (11.95) & 2.22$\times 10^{-3}$ (5.63) \\
vdW               & 1.58$\times 10^{-7}$       & 1.01$\times 10^{-3}$      & -1.39$\times 10^{-2}$     &  & 2.46$\times 10^{-12}$ (4.63)  & 1.89$\times 10^{-3}$ (8.49)  & 3.15$\times 10^{-3}$ (3.96) \\ \hline
\textbf{n = 5000} &                &               &               &  &                  &                  &                 \\ 
QMLE              & 3.66$\times 10^{-7}$       & 1.20$\times 10^{-2}$      & -2.07$\times 10^{-2}$     &  & 8.21$\times 10^{-12}$         & 1.20$\times 10^{-2}$         & 8.22$\times 10^{-3}$        \\
Sign              & 6.95$\times 10^{-11}$       & -2.00$\times 10^{-3}$     & -3.86$\times 10^{-3}$     &  & 1.01$\times 10^{-12}$ (8.09)  & 7.21$\times 10^{-4}$ (16.67) & 1.16$\times 10^{-3}$ (7.10) \\
Wilcoxon          & -3.01$\times 10^{-10}$      & -1.81$\times 10^{-3}$     & -3.98$\times 10^{-3}$     &  & 1.06$\times 10^{-12}$ (7.73)  & 7.56$\times 10^{-4}$ (15.90) & 1.20$\times 10^{-3}$ (6.85) \\
vdW               & -1.57$\times 10^{-8}$      & -2.37$\times 10^{-3}$     & -5.86$\times 10^{-3}$     &  & 1.54$\times 10^{-12}$ (5.33)  & 1.13$\times 10^{-3}$ (10.64) & 1.77$\times 10^{-3}$ (4.64)  \\ \hline
\end{tabular}
\end{center}
\end{table}

\textbf{Simulation for the GARCH~($2, 1$) model.}
It was reported in Francq and Zako{\"\i}an (2009) that higher order GARCH models may fit some financial time series better than the GARCH~($1, 1$) model. Therefore, here we examine the performance of the R-estimators under the GARCH~($2, 1$) model by running simulations with $R = 500, n = 1000$. To choose the true model parameter for simulation, we fitted the FTSE 100 data from January 2007 to December 2009 to the by GARCH~($2, 1$) model using the {\tt fGarch} package. It turned out that 
$\alpha_2$ is significant with $p$-value $=0.019$ and the Akaike information criterion (AIC) of the GARCH~($2, 1$) is smaller than that of the GARCH~($1, 1$). Since the {\tt fGarch} estimate of the true parameter
is $\bth_0 = (4.46\times 10^{-6}, 0.0525, 0.108, 0.832)^\prime$, we choose this $\bth_0$ to generate sample from the GARCH~($2, 1$) model with various error distributions. The R-estimators and QMLE are compared through the  bias and MSE and  the corresponding estimates are reported in Table \ref{sim.GARCH21}. Similar to the GARCH~($1, 1$) case, the advantage of the R-estimators over the QMLE becomes prominent under heavy-tailed distributions, especially under the $t(3)$ distribution, where the bias and MSE of the R-estimators have smaller order of magnitude than the those of the QMLE. 

\begin{table}[!htbp]
\caption{The estimates of the bias and MSE  of  the R-estimators (sign, Wilcoxon and vdW scores) and the QMLE for the  GARCH~($2, 1$) model under various error distributions (sample size $n = 1000$; $R = 500$ replications).}\vspace{-3mm}
\label{sim.GARCH21}
\begin{center}
\scriptsize  
\begin{tabular}{cccccccccc}
\hline
                  & \multicolumn{4}{c}{\textbf{Bias}}          &  & \multicolumn{4}{c}{\textbf{MSE}}  \\ \hline
                  & $\omega$ & $\alpha_1$ & $\alpha_2$ & $\beta$   &  & $\omega$ & $\alpha_1$ & $\alpha_2$ & $\beta$  \\ \hline
\textbf{Normal}   &          &            &            &           &  &          &            &            &          \\
QMLE              & 3.80$\times 10^{-6}$ & 8.85$\times 10^{-3}$   & -3.16$\times 10^{-3}$  & -2.01$\times 10^{-2}$ &  & 2.50$\times 10^{-11}$ & 1.71$\times 10^{-3}$   & 1.93$\times 10^{-3}$   & 1.35$\times 10^{-3}$ \\
Sign              & 3.79$\times 10^{-6}$ & 1.05$\times 10^{-2}$   & -5.76$\times 10^{-3}$  & -1.84$\times 10^{-2}$ &  & 2.65$\times 10^{-11}$ & 1.90$\times 10^{-3}$   & 2.19$\times 10^{-3}$   & 1.30$\times 10^{-3}$ \\
Wilcoxon          & 3.68$\times 10^{-6}$ & 9.91$\times 10^{-3}$   & -6.51$\times 10^{-3}$  & -1.81$\times 10^{-2}$ &  & 2.42$\times 10^{-11}$ & 1.74$\times 10^{-3}$   & 2.01$\times 10^{-3}$   & 1.20$\times 10^{-3}$ \\
vdW               & 3.95$\times 10^{-6}$ & 1.03$\times 10^{-2}$   & -7.49$\times 10^{-3}$  & -1.96$\times 10^{-2}$ &  & 2.67$\times 10^{-11}$ & 1.74$\times 10^{-3}$   & 1.94$\times 10^{-3}$   & 1.25$\times 10^{-3}$ \\ \hline
\textbf{DE}       &          &            &            &           &  &          &            &            &          \\
QMLE              & 2.61$\times 10^{-6}$ & 4.43$\times 10^{-3}$   & 2.14$\times 10^{-3}$   & -1.99$\times 10^{-2}$ &  & 3.11$\times 10^{-11}$ & 2.53$\times 10^{-3}$   & 4.01$\times 10^{-3}$   & 2.33$\times 10^{-3}$ \\
Sign              & 1.96$\times 10^{-6}$ & 5.02$\times 10^{-3}$   & 2.57$\times 10^{-4}$   & -1.61$\times 10^{-2}$ &  & 9.96$\times 10^{-12}$ & 1.85$\times 10^{-3}$   & 2.88$\times 10^{-3}$   & 1.66$\times 10^{-3}$ \\
Wilcoxon          & 1.85$\times 10^{-6}$ & 3.03$\times 10^{-3}$   & -2.03$\times 10^{-3}$  & -1.65$\times 10^{-2}$ &  & 9.57$\times 10^{-12}$ & 1.79$\times 10^{-3}$   & 2.85$\times 10^{-3}$   & 1.73$\times 10^{-3}$ \\
vdW               & 1.95$\times 10^{-6}$ & 1.80$\times 10^{-3}$   & -1.96$\times 10^{-3}$  & -1.81$\times 10^{-2}$ &  & 1.10$\times 10^{-11}$ & 1.92$\times 10^{-3}$   & 3.14$\times 10^{-3}$   & 1.97$\times 10^{-3}$ \\ \hline
\textbf{Logistic} &          &            &            &           &  &          &            &            &          \\
QMLE              & 4.72$\times 10^{-6}$ & 5.44$\times 10^{-3}$   & 8.41$\times 10^{-4}$   & -1.98$\times 10^{-2}$ &  & 5.24$\times 10^{-11}$ & 3.75$\times 10^{-3}$   & 4.49$\times 10^{-3}$   & 2.06$\times 10^{-3}$ \\
Sign              & 3.17$\times 10^{-6}$ & 3.23$\times 10^{-3}$   & -2.32$\times 10^{-3}$  & -1.49$\times 10^{-2}$ &  & 2.09$\times 10^{-11}$ & 1.75$\times 10^{-3}$   & 2.50$\times 10^{-3}$   & 1.39$\times 10^{-3}$ \\
Wilcoxon          & 3.24$\times 10^{-6}$ & 2.93$\times 10^{-3}$   & -1.97$\times 10^{-3}$  & -1.51$\times 10^{-2}$ &  & 2.20$\times 10^{-11}$ & 1.73$\times 10^{-3}$   & 2.48$\times 10^{-3}$   & 1.42$\times 10^{-3}$ \\
vdW               & 3.62$\times 10^{-6}$ & 2.49$\times 10^{-3}$   & -1.97$\times 10^{-3}$  & -1.72$\times 10^{-2}$ &  & 2.76$\times 10^{-11}$ & 1.91$\times 10^{-3}$   & 2.67$\times 10^{-3}$   & 1.72$\times 10^{-3}$ \\ \hline
$\mbf{t(3)}$   &          &            &            &           &  &          &            &            &          \\
QMLE              & 1.78$\times 10^{-6}$ & 3.06$\times 10^{-2}$   & -2.07$\times 10^{-2}$  & -3.12$\times 10^{-2}$ &  & 2.85$\times 10^{-11}$ & 7.88$\times 10^{-2}$   & 7.65$\times 10^{-2}$   & 1.08$\times 10^{-2}$ \\
Sign              & 9.92$\times 10^{-7}$ & 3.18$\times 10^{-3}$   & -3.92$\times 10^{-3}$  & -1.29$\times 10^{-2}$ &  & 5.67$\times 10^{-12}$ & 3.25$\times 10^{-3}$   & 5.25$\times 10^{-3}$   & 2.42$\times 10^{-3}$ \\
Wilcoxon          & 9.78$\times 10^{-7}$ & 3.69$\times 10^{-3}$   & -4.87$\times 10^{-3}$  & -1.28$\times 10^{-2}$ &  & 5.70$\times 10^{-12}$ & 3.51$\times 10^{-3}$   & 5.58$\times 10^{-3}$   & 2.50$\times 10^{-3}$ \\
vdW               & 9.86$\times 10^{-7}$ & 5.10$\times 10^{-3}$   & -9.49$\times 10^{-3}$  & -1.56$\times 10^{-2}$ &  & 7.59$\times 10^{-12}$ & 5.66$\times 10^{-3}$   & 8.08$\times 10^{-3}$   & 3.57$\times 10^{-3}$ \\ \hline
\end{tabular}
\end{center}
\end{table}

\section{Bootstrapping the R-estimators}\label{sec.Boot}
Since the asymptotic covariance matrices of the R-estimators are of complicated forms, in this section we employ the weighted bootstrap technique discussed by Chatterjee and Bose~(2005) in the context of M-estimators to approximate the distributions of the R-estimators and we compute corresponding coverage probabilities to exhibit the effectiveness of such bootstrap approximations.  
The weighted bootstrap in this context is attractive for its computational simplicity since at each bootstrap replication, only the weights need to be generated instead of resampling the data components to compute the replicates of the bootstrapped R-estimate. 

In this context, the weighted bootstrap version of the rank-based central sequence is
$$\h{\mbf R}_{n, \vp}^* (\bth) := \h{\mbf R}_n^* (\bth) = n^{-1/2} \sum_{t=1}^n w_{nt} \frac{\d{\h{\mbf v}}_t(\bth)}{\h{v}_t(\bth)}\left\lbrace 1-\vp\left[\frac{\h{R}_{nt}(\bth)}{n+1}\right]\frac{X_t}{\h{v}_t^{1/2}(\bth)}\right\rbrace,$$
where $\{w_{nt}; 1\leq t \leq n; n \geq 1\}$ is a triangular array of r.v.'s which satisfies the following conditions:\\
(i) The weights $\{w_{nt}; 1\leq t \leq n\}$ are exchangeable and independent of the data $\{X_t; 1\leq t \leq n\}$ and errors $\{\epsilon_t;  1\leq t \leq n \};$ \\
(ii) For all $t \geq 1, w_{nt} \geq 0; {\mathrm E}(w_{nt}) = 1; \text{Corr}(w_{n1}; w_{n2}) = O(1/n); \text{Var}(w_{nt}) = \sigma_n^2,$ where $0 < c_1 < \sigma_n^2 = o(n),$ with $c_1 >0$ being a constant.

Among various schemes of the weights satisfying the above conditions, we compare the following three types of weights:\\
(i) Scheme M: $\{w_{n1}, \ldots, w_{nn}\}$ have a multinomial $(n, 1/n, \ldots, 1/n)$ distribution, which is essentially the
classical paired bootstrap. \\
(ii) Scheme E: $w_{nt}=(n E_t)/\sum_{i=1}^n E_i$, where $\{E_t\}$ are i.i.d. exponential r.v.'s with mean $1$. \\ 
(iii) Scheme U: $w_{nt}=(n U_t)/\sum_{i=1}^n U_i$,
where $\{U_t\}$ are i.i.d. uniform r.v.'s on $(0.5, 1.5)$.

We propose the following Algorithm~\ref{algBootRestGARCH} to compute our bootstrap estimator, where the weighted version of (\ref{eq:formula}) is used to compute the bootstrap estimator $\h{\bth}_{*n\vp}$ of $\h{\bth}_{n\vp}$ and then with $\hat{c}_{\vp}$ given in \eqref{hatcvp}, we obtain the bootstrap estimator $\h{\bth}_{*n}$ of $\h{\bth}_{n}$ through multiplying the $\omega$ and $\alpha$ components by $\hat{c}_{\vp}^{-1}$.

\begin{algorithm}[!htbp]
\SetAlgoLined\vspace{3mm}

\KwIn{a sample $\{X_t; 1 \leq t \leq n\}$, orders $p$ and $q$ of the GARCH process, numbers $k$ and $k^*$ of iterations in the one-step and bootstrap procedures respectively.}
\KwOut{Bootstrap estimator $\hat{\bth}_{*n}$}
\begin{enumerate}

\item Compute the R-estimator $\h{\bth}_{n\vp}$ using \eqref{eq:formula} and set $\h{\bth}_{*n\vp} = \h{\bth}_{n\vp}$.
 
\item \For{$i \gets 1$ \textbf{to} $k^*$}{

\begin{equation}
\begin{split}
\h{\bth}_{*n\vp} &\gets \h{\bth}_{*n\vp}-\left[\sum_{t=1}^n \frac{\d{\h{\mbf v}}_t(\h{\bth}_{*n\vp})\d{\h{\mbf v}}_t^{'}(\h{\bth}_{*n\vp})}{\h{v}_t^2(\tilde{\bth}_{*n\vp})}\right]^{-1} \\
 & \quad \quad \quad \quad \times\left\lbrace\sum_{t=1}^n \frac{\d{\h{\mbf v}}_t(\h{\bth}_{*n\vp})}{\h{v}_t(\tilde{\bth}_{*n\vp})}\left[1-\varphi\left(\frac{R_{nt}(\h{\bth}_{*n\vp})}{n+1}\right)\frac{X_t}{\h{v}_t^{1/2}(\h{\bth}_{*n\vp})}\right]\right\rbrace
\end{split} 
\label{eq:formulaBoot}
\end{equation}
}

\item Compute  $\hat{c}_{\vp}$ using \eqref{hatcvp}, and then compute $\h{\bth}_{*n}$ through multiplying the $\omega$ and $\alpha$ components by $\hat{c}_{\vp}^{-1}$.

\end{enumerate}

\caption{Bootstrapping R-estimator for GARCH models}\label{algBootRestGARCH}
\end{algorithm}

%
%\begin{equation*}
%\begin{split}
%\h{\bth}_{*(r+1)} &=\h{\bth}_{*(r)}-\left[\sum_{t=1}^n w_{nt} \frac{\d{\h{\mbf v}}_t(\h{\bth}_{*(r)})\d{\h{\mbf v}}_t^{'}(\h{\bth}_{*(r)})}{\h{v}_t^2(\h{\bth}_{*(r)})}\right]^{-1} \\
%&\quad   \times \left\lbrace\sum_{t=1}^n w_{nt} \frac{\d{\h{\mbf v}}_t(\h{\bth}_{*(r)})}{\h{v}_t(\h{\bth}_{*(r)})}\left[1-\varphi\left(\frac{\h{R}_{nt}(\h{\bth}_{*(r)})}{n+1}\right)\frac{X_t}{\h{v}_t^{1/2}(\h{\bth}_{*(r)})}\right]\right\rbrace, \quad \text{for} \quad r = 0, 1, ... ,
%\end{split} 
%\end{equation*}
%with $\h{\bth}_{*(0)} = \h{\bth}_{n\vp}$ being the initial estimator. 

\subsection{Bootstrap coverage probabilities}

Chatterjee and Bose~(2005) proved the consistency of the bootstrap for an estimator defined by smooth estimating equation. 
%The consistency of the bootstrap for the M-estimators of the GARCH~($p, q$) parameter was proved by Mukherjee~(2019). 
Since ranks are integer-valued discontinuous functions, the proof of the asymptotic validity of the bootstrapped R-estimator is a mathematically challenging problem which is beyond the scope of this paper. Instead, we resort to simulations to evaluate the performance of the bootstrap approximation of the R-estimators by comparing the distribution of $\sigma_n^{-1} \sqrt{n} (\h{\bth}_{*n} - \h{\bth}_{n})$ with that of $\sqrt{n} (\h{\bth}_{n} - \bth_{0})$ in terms of coverage rates. 

In particular, with the choice of the true parameter $\bth_0 = (6.50\times 10^{-6}, 0.177, 0.716)^\prime$ as in the simulation study of the GARCH~($1, 1$) model of the previous section, we generate $R=1000$ data each with sample size $n = 1000$ based on different error distributions. For each data, the exchangeable weights $\{w_{nt}; 1\leq t \leq n\}$ are generated $B=2000$ times. We consider cases where the error distributions are normal, DE, logistic and $t(3)$. The bootstrap weights are based on Schemes M, E and U. 
The bootstrap coverage rates (in percentage) for $95\%, 90\%$ nominal levels are reported in Table \ref{bootstrap1}. Notice that all bootstrap schemes provide reasonable coverage rates under these error distributions. Scheme U is slightly better than the scheme M and E under the DE and $t(3)$ distributions. 
%The coverage rates are comparable to those reported in  Mukherjee (2019) for the M-estimators, where the coverage rates for the QMLE are poor under $t(3)$ distribution.  

\begin{table}
\caption{The bootstrap coverage rates (in percentage) for the R-estimators (sign, Wilcoxon and vdW) under various error distributions}
\label{bootstrap1}
\begin{center}
\begin{tabular}{c c c c c c c c c c} \hline
&&&\multicolumn{3}{c}{$95\%$ nominal level} &&\multicolumn{3}{c}{$90\%$ nominal level} \\ \cline{4-6} \cline{8-10} & &&$\omega$ &$\alpha$&$\beta$ & & $\omega$ &$\alpha$&$\beta$ \\ \hline
Normal& sign & Scheme M &  94.1&	93.6&	92.8  &&	90.8 &	88.5 &	89.2  \\
&&Scheme E &  93.5  &	93.3 	&92.8  &&		90.3 &	88.1  &	88.8  \\
&&Scheme U & 94.8  &	94.7 &	93.7 &&	91.7 &	90.2 &	89.6 
\\  \hline
Normal& Wilcoxon &Scheme M & 96.4 &	96.3 &	93.7 &&	93.8 &	90.3 &	89.7  \\
&&Scheme E & 96.5 &	96.0 &	93.2 &&		93.2 &	90.0 &	89.3  \\
&&Scheme U & 96.5  &	95.6 &	94.3  &&	93.5 &	91.1  &	90.1  \\ \hline
Normal & vdW &Scheme M & 94.3 &	92.3 	&93.6  &&		91.3 &	89.1  &	89.0  \\
&&Scheme E &  94.2 &	92.2 &	92.7  &&	90.5  &	88.5  &	88.7 \\
&&Scheme U & 95.3  &	94.1  &	93.7  &&	91.4  &	90.5  &	89.2  \\ \hline
DE & sign & Scheme M &  90.8 &	90.5  &	91.6  &&	87.8  &	86.0  &	86.8  \\
&&Scheme E & 90.4  &	89.6  &	90.4  &&		87.2  &	85.3 &	86.4   \\
&&Scheme U & 91.9 	&92.7  &	92.7  &&		88.8  &	88.9  &	87.8   \\ \hline
DE & Wilcoxon &Scheme M & 91.0 	&91.0 &	91.5  &&		87.6 &	86.9  &	87.6  \\
&&Scheme E & 90.7  &	90.2 	&90.4  &&		87.2  &	86.2  &	86.6 \\
&&Scheme U & 92.4  &	93.6  &	92.8  &&		88.7  &	89.1  &	87.7 \\ \hline
DE & vdW &Scheme M & 90.9 &	87.6 	&89.7  &&		87.5 &	83.9 &	85.3  \\
&&Scheme E &  90.4  &	86.9  &	88.9  &&	86.9  &	83.1  &	84.8 \\
&&Scheme U & 92.4  &	90.2  &	91.0  &&		89.7  &	85.6  &	86.0  \\ \hline
Logistic & sign & Scheme M & 93.0  &	91.1  &	92.1  &&		89.0  &	87.6  &	88.6  \\ 
&&Scheme E &  93.4  &	92.3  &	92.5  &&		89.8  &	86.3  &	88.4  \\
&&Scheme U &  93.0  &	92.3 	& 91.9  &&		88.7 &	87.5  &	87.1  \\ \hline
Logistic & Wilcoxon &Scheme M & 93.5  &	91.3  &	92.5  &&		89.9  &	87.7 	& 89.2  \\
&&Scheme E & 93.7  &	89.4  &	91.7  &&		90.0  &	85.8  &	87.1 \\
&&Scheme U & 94.1  &	92.2  &	92.8  &&		88.9  &	88.1 &	86.4  \\ \hline
Logistic & vdW &Scheme M & 93.1  &	91.2  &	92.3  &&		89.3 &	88.0 &	87.0  \\
&&Scheme E & 92.4  &	91.1  &	91.7  &&		88.5  &	87.6  &	86.4 \\
&&Scheme U & 94.4  &	93.6  &	92.2  &&		90.4  &	90.8  &	86.8  \\ \hline
$t(3)$ & sign & Scheme M &  88.3  &	85.3  &	88.3  &&		86.0  &	82.6  &	83.5   \\ 
&&Scheme E &  88.3 	&85.0  &	87.6  &&		84.9  &	82.4  &	82.0  \\
&&Scheme U &  91.8  &	89.0  &	90.6  &&		87.5  &	85.6  &	86.4   \\ \hline
$t(3)$ & Wilcoxon &Scheme M & 88.1 &	84.7  &	88.5  &&		85.7  &	81.4  &	83.7  \\
&&Scheme E & 88.0 	& 84.5  &	87.7  &&		85.0  &	80.7  &	82.8 \\
&&Scheme U & 91.8  &	88.7  &	90.0  &&		87.6  &	85.6  &	85.5 \\ \hline
$t(3)$ & vdW &Scheme M & 85.6  &	82.3  &	86.1  &&		81.9  &	79.7  &	81.0  \\
&&Scheme E & 84.4  &	82.1  &	86.0  &&		80.9  &	78.7  &	80.2 \\
&&Scheme U & 90.3  &	85.4  &	88.9  &&		86.6  &	81.0  &	83.3  \\ \hline
\end{tabular}
\end{center}
\end{table}

To check the performance of the bootstrap under different sample sizes, we run simulation with $n = 200, 300, ..., 1000$ for the sign, Wilcoxon and vdW scores. There are $R=1000$ replications being generated under the normal error distribution, and each replication is bootstrapped $B=2000$ times with the scheme U. Figure~\ref{fig:BootR} shows the bootstrap coverage rates for $\omega$ 
(first row), $\alpha$  (second row), $\beta$  (third row) under $95\%$ nominal level (left column) and $90\%$ nominal level (right column). We notice that as the sample size increases, the coverage rates get close to the nominal levels for all parameters and all R-estimators, with only few exceptions. This tends to imply the consistency of the bootstrap approximation. With the sample size $n \geq 500$, the bootstrap coverage rates are generally close to the nominal levels.

\begin{figure}
\begin{center}
\caption{Plot of the bootstrap coverage rates for the  R-estimators (sign, Wilcoxon and vdW) at different sample sizes. The first, second and third rows are for $\omega, \alpha$ and $\beta$ respectively. The nominal levels are $95\%$ (left column) and $90\%$ (right column). Scheme U is employed and the errors have normal distribution.}
\includegraphics[width=6.5in]{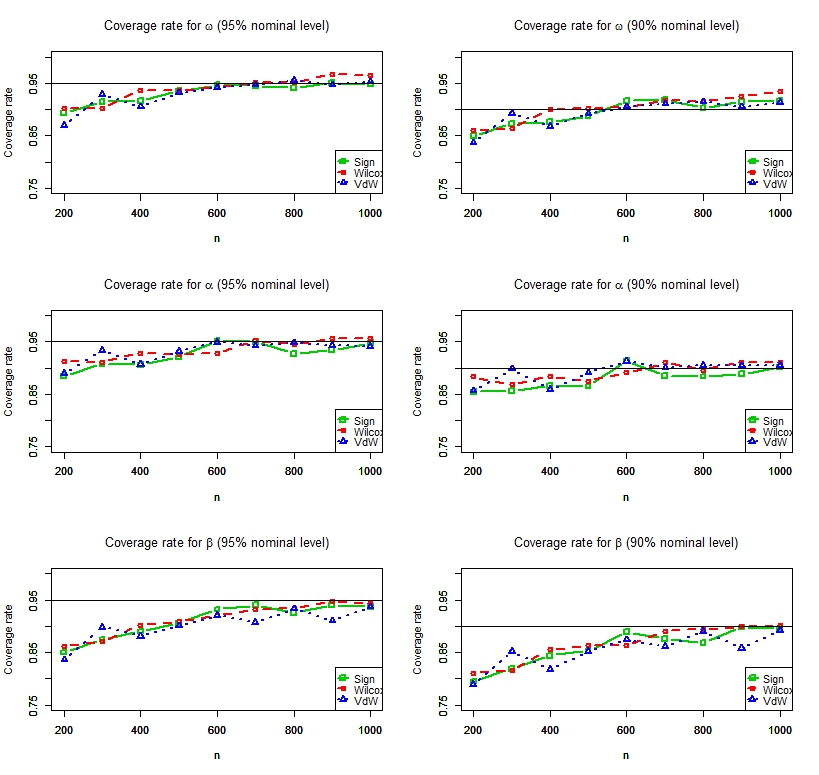}\label{fig:BootR}
\end{center}
\end{figure}

\section{Application of the R-estimator to the GJR model}\label{sec.GJR.R.main}
The GJR model, proposed by Glosten et al.~(1993), is used frequently for asymmetric financial data.
Iqbal and Mukherjee~(2010) considered a class of M-estimators to estimate model parameters. In a similar fashion, we have used the new class of R-estimators to analyze the GJR model and the relevant simulation results are available in Appendix~%\ref{supp-sec.GJR}
\ref{sec.GJR}.

%Body of paper.  Margins in this document are roughly 0.75 inches all around, letter size paper.

\section{Conclusion}\label{sec.con.R.GARCH}
We propose a new class of R-estimators for the GARCH model and derive the asymptotic normality of these estimators under mild moment and smoothness conditions on the error distribution. We exhibit the robustness and efficiency of R-estimators with respect to the QMLE through simulation and real data analysis. We also consider a general type of weighted bootstrap for the R-estimators which is computational-friendly and easy-to-implement. The theoretical analysis such as the asymptotic validity of the weighted bootstrap is an interesting but challenging problem that can be explored in the future. 

\section*{Acknowledgements}
 Hang Liu gratefully acknowledges the \textit{ESRC North West Social Science Doctoral Training Partnership} (NWSSDTP Grant Number ES/P000665/1) for funding his PhD studentship.

%\bigskip
%\begin{center}
%{\large\bf SUPPLEMENTARY MATERIALS}
%{\large\bf APPENDIX}
%\end{center}
%\begin{description}

%\item[Proofs of Proposition~\ref{Prop.asy.linear} and Theorem~\ref{thm.asy}:] The supplementary material \ref{sec.proof} contains the proofs of Proposition~\ref{Prop.asy.linear} and Theorem~\ref{thm.asy}.

%\item[Applications of the R-estimators to the GJR model:] The supplementary material \ref{sec.GJR} includes simulation results of the R-estimators for the GJR model.

%\item[R-package for computing the R-estimators and their bootstrap counterparts:] In RankGarch.cpp file, functions \texttt{Resti} and \texttt{Boot\_Resti} are used for computing the R-estimators based on the vdW, sign and Wilcoxon scores and the corresponding bootstrap estimators for the GARCH~($1, 1$) model; functions \texttt{Resti\_21} and \texttt{Boot\_Resti\_21} are used for computing the R-estimators for the GARCH~($2, 1$) model. Other .R files include codes for running simulation and computing bootstrap coverage rates.

%\item[Financial data sets:] Data sets used in the illustration of the R-estimation method in Section~\label{sec.real}. (.csv files)

%\end{description}

%\bibliographystyle{apalike}

%\bibliography{Bibliography-RestGARCH.bib}

\pagebreak

%\externaldocument[mainRGARCH-]{JASA-RestGARCH}

\newcommand{\beginsupplement}{%
        \setcounter{table}{0}
        \renewcommand{\thetable}{S\arabic{table}}%
        \setcounter{figure}{0}
        \renewcommand{\thefigure}{S\arabic{figure}}%
        \setcounter{equation}{0}
        \renewcommand{\theequation}{S\arabic{equation}}%
        \setcounter{section}{0}
        \renewcommand{\thesection}{S\arabic{section}}
        \renewcommand{\thetheorem}{S\arabic{theorem}}
     }

%\beginsupplement

\appendix

\newpage 

\begin{center}
%\textbf{\large Supplemental Materials: R-estimators in GARCH models; asymptotics, applications and bootstrappng}
\textbf{\large APPENDIX}
\end{center}

\setcounter{page}{1}

\section{Proofs of Proposition~\ref{Prop.asy.linear}, Theorem~\ref{thm.asy} and \ref{thm.asybth0}}\label{sec.proof}
We will use the following facts from Berkes et al.~(2003) for the proofs:

Fact 1. For any $\nu > 0$,
\begin{equation}\label{fact1}
{\mathrm E}\left\lbrace \sup \left[\left| \frac{\d{\mbf v}_1(\bth)}{v_1(\bth)}\right|^\nu; \bth \in \bTH_0 \right]\right\rbrace < \infty.
\end{equation}
and
\begin{equation*}
{\mathrm E}\left\lbrace \sup \left[\left| \frac{\ddot{\mbf v}_1(\bth)}{v_1(\bth)}\right|^\nu; \bth \in \bTH_0 \right]\right\rbrace < \infty.
\end{equation*}
%{\bf Two displays mean more space here and fact 2}

Fact 2. There exist random variables $Z_0$, $Z_1$ and $Z_2$, all independent of $\{\epsilon_t; 1\leq t \leq n\}$ and a number $0 < \rho < 1$, such that
\begin{equation}\label{fact21}
0 < v_t(\bth) - \h{v}_t(\bth) \leq \rho^t Z_0,
\end{equation}
\begin{equation}\label{fact22}
|\d{\mbf v}_t(\bth) - \d{\h{\mbf v}}_t(\bth)| \leq \rho^t Z_1,
\end{equation}
\begin{equation*}
|\ddot{\mbf v}_t(\bth) - \ddot{\h{\mbf v}}_t(\bth)| \leq \rho^t Z_2.
\end{equation*}

Fact 3. Let $\{(A_t, B_t, C_t); t \geq 0\}$ be a sequence of identically distributed random variables. If ${\mathrm E} \log^+ A_0 + {\mathrm E} \log^+ B_0 + {\mathrm E} \log^+ C_0 < \infty$, then for any $|r| < 1$,
\begin{equation}\label{fact3}
\sum_{t=0}^\infty (A_t + B_t C_t) r^t \quad \text{converges with probability 1}.
\end{equation}

\textbf{Idea of the proof of Theorem %\ref{mainRGARCH-thm.asy}
\ref{thm.asy}.} We first derive the following Theorem \ref{Thm.emp}, Corollary \ref{Corollary} and Theorem \ref{UUbb} on empirical processes where a scale-perturbed weighted mixed-empirical process is approximated by its non-perturbed version. With 
$\bth_{n\vp} = \bth_{0\vp} + n^{-1/2} \bb$, we derive asymptotic expansion of the difference between two quantities $\T_{1n}(\bth_{n\vp})$ and $\T_{2n}(\bth_{n\vp})$ which are defined later. We then show that $\T_{1n}(\bth_{n\vp})$ can be approximated by a r.v., which is asymptotic normal, plus a term linear in $\bb$. Also, we use  $\T_{2n}(\bth_{n\vp})$ to approximate ${\mbf R}_n(\bth_{n\vp})$ and show that asymptotically their difference is a r.v. with mean zero. Finally, we prove that the difference of ${\mbf R}_n(\bth_{n\vp})$ and $\h{{\mbf R}}_n(\bth_{n\vp})$ converges in probability to zero. Using these results, we are able to derive the asymptotic linearity of $\h{{\mbf R}}_n(\bth_{n\vp})$ as shown in Proposition %\ref{mainRGARCH-Prop.asy.linear}
\ref{Prop.asy.linear}. Finally, using the definition of the one-step R-estimator in (%\ref{mainRGARCH-onestep.def}
\ref{onestep.def}), we are able to derive the  asymptotic distribution of $\h{\bth}_{n\vp}$.\\

%The following results state the uniform approximation of a scale-perturbed weighted mixed-empirical process by its non-perturbed version which is used for the derivation of the asymptotic distributions of the R-estimators. 

\textbf{Theorem \ref{Thm.emp}, Corollary \ref{Corollary} and Theorem \ref{UUbb}.}
%The following theorem states the uniform approximation of a scale-perturbed weighted mixed-empirical process by its non-perturbed version is fundamental for the derivation of the asymptotic distribution of R-estimators. 

Let $\{(\eta_t,\,\g_{nt},\del_{nt}),\,1\le t\le n\}$ be an array of
$3$-tuple r.v.'s defined on a probability space such that $\{\eta_t,\,1\le t\le n\}$ are  i.i.d. with c.d.f. $G$
and $\eta_t$ is independent of $(\g_{nt},\del_{nt})$ for each $1\le t\le n$. Let $\{{\cal A}_{nt}; 1 \le t \le n\}$ be an array of increasing sub-$\sigma$-fields in both $n$ and $t$ so that ${\cal A}_{nt} \subset {\cal A}_{n(t+1)}$, 
${\cal A}_{nt} \subset {\cal A}_{(n+1)t}$, $1 \le t \le n-1$, $n \ge 2$. Assume also that $(\g_{n1},\del_{n1})$ is ${\cal A}_{n1}$ measurable, and 
$\left\{\{(\g_{nt},\del_{nt}); 1 \le t \le j\}, \eta_1, \eta_2, \ldots, \eta_{j-1}\right\}$ are
${\cal A}_{nj}$ measurable, $2 \le j \le n$. For $x\in \R$, recall that $\mu(x)={\mathrm E}[\eta I(\eta<x)]=\int_{-\infty}^x sg(s)ds$
and consider the following weighted mixed-empirical processes 
\benr\label{v}
&&\ti V_n(x):= n^{-1/2}\s \g_{nt} \eta_t I(\eta_t< x+x\del_{nt}),\\ 
&& \ti J_n(x):= n^{-1/2}\s \g_{nt} \mu(x+x \del_{nt}),\nn \\ 
&&V^*_n(x):= n^{-1/2}\s \g_{nt} \eta_t I(\eta_t\le x),\qquad \qquad\,\, J^*_n(x):= n^{-1/2}\s \g_{nt} \mu(x ),\nn \\ 
&&  \ti U_n(x):= \ti V_n(x)-\ti J_n(x), \quad U^*_n(x):= V^*_n(x)-J^*_n(x).\nn
\eenr
Assume the following conditions on the weights $\{\g_{nt}\}$ and perturbations $\{\del_{nt}\}$.

Let $C_n:=\s {\rm E}|\gamma_{nt}|^q$ for some $q > 2$. Let $a$ with $0<a<q/2$ be such that
\benr
&& C_n/n^{q/2-a}=o(1) \label{c}.\\
&& \left(n^{-1}\s \g_{nt}^2\right)^{1/2} =\g+o_{\rm P}(1) 
\,\,\mbox{for a positive r.v.} \g. \label{ga1}\\
&& {\mathrm E}\left(n^{-1}\s \g_{nt}^2 \right)^{q/2} =  O(1).\label{bga2}\\
&& \max_{1\le t\le n}n^{-1/2}|\g_{nt}|= o_{\rm P}(1).\label{ga2}\\
&& \max_{1\le t\le n}|\del_{nt}|=o_{\rm P}(1)\label{ga3}.\\
&& \frac{n^{q/2-\ep}}{C_n}{\mathrm E}\left[n^{-1}\s \{\g_{nt}^2|\del_{nt}| \}\right]^{q/2} = o(1)\label{bga3}. \\
&& n^{-1/2}\s |\g_{nt}\del_{nt}|=O_{\rm P}(1).\label{ga4}
\eenr
The following theorem shows that {\it uniformly} over the entire real line, the perturbed process 
$\ti U_n$ can be approximated by $U^*_n$. 

\begin{theorem}\label{Thm.emp}
Under the above set-up and Assumptions (\r{c})-(\r{ga4}) and (A1),
\benr \sup_{x\in \R} |\ti U_n(x)-U^*_n(x)|&=&o_{\rm P}(1). \label{ti*} 
\eenr
\end{theorem}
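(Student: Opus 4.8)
The plan is to write $\ti U_n-U^*_n$ as a single weighted sum that is, for each fixed $x$, conditionally centred given the $\sigma$-fields ${\cal A}_{nt}$, to reduce the pointwise statement to a conditional-variance bound, and then to upgrade to uniformity over $\R$ by a monotone grid argument on compact sets combined with a moment-driven tail estimate. First I would set
$$D_{nt}(x):=\g_{nt}\eta_t\left[I(\eta_t< x+x\del_{nt})-I(\eta_t\le x)\right]-\g_{nt}\left[\m(x+x\del_{nt})-\m(x)\right],$$
so that $\ti U_n(x)-U^*_n(x)=n^{-1/2}\s D_{nt}(x)$. Since $(\g_{nt},\del_{nt})$ is ${\cal A}_{nt}$-measurable, $\eta_t$ is independent of ${\cal A}_{nt}$, and $\m(y)={\mathrm E}[\eta I(\eta<y)]$, we have ${\mathrm E}[D_{nt}(x)\mid{\cal A}_{nt}]=0$; thus $\{D_{nt}(x)\}$ is a martingale-difference array, and the empirical increment is exactly compensated by the $\m$-terms, so that only its conditional variance survives.

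For the pointwise bound I would note that, for $a<b$, the conditional second moment of $\eta_t[I(\eta_t<b)-I(\eta_t\le a)]$ equals $\int_a^b s^2 g(s)\,ds$, whence the conditional variance of $n^{-1/2}\s D_{nt}(x)$ is at most $n^{-1}\s\g_{nt}^2\big|\int_x^{x+x\del_{nt}}s^2g(s)\,ds\big|$. By the boundedness of $x^2g(x)$ in (A1)(i) this is $\le M_0|x|\,n^{-1}\s\g_{nt}^2|\del_{nt}|$, which tends to zero in probability by \eqref{bga3} and \eqref{ga3}. A conditional Chebyshev inequality then gives $n^{-1/2}\s D_{nt}(x)=o_{\rm P}(1)$ for each fixed $x$.

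To make the bound uniform over a compact interval $[-K,K]$ I would discretise, using the monotonicity of $x\mapsto I(\eta_t<x(1+\del_{nt}))$ and of $\m$ on each of the half-lines $x\ge0$ and $x\le0$ (treated separately, since the scale perturbation $x+x\del_{nt}=x(1+\del_{nt})$ reverses the direction of monotonicity at the origin). Choosing a grid whose images under $y=\m(x)$ are equispaced, the oscillation of $\ti U_n-U^*_n$ inside a cell is controlled by a weighted empirical increment over a short interval plus a deterministic compensator increment; the uniform continuity of $y\mapsto\m^{-1}(y)g(\m^{-1}(y))$ and $y\mapsto(\m^{-1}(y))^2g(\m^{-1}(y))$ from (A1)(i), the scale-Lipschitz property of $\m$ together with \eqref{ga4}, and (A1)(ii) make this oscillation uniformly small. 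For the tails $|x|>K$, both $\ti U_n(x)$ and $U^*_n(x)$ converge to the common limit $n^{-1/2}\s\g_{nt}(\eta_t-{\mathrm E}\eta)$ as $x\to\pm\infty$, so the remaining fluctuation is a centred weighted tail sum; a maximal inequality for this monotone object, together with ${\mathrm E}[\eta^2 I(|\eta|>K)]\to0$ (from (A1)(iii)) and the weight-moment bounds \eqref{c} and \eqref{bga2}, makes $\sup_{|x|>K}|\ti U_n(x)-U^*_n(x)|$ small uniformly in $n$. Combining the two regions and letting first $K$ and then $n$ grow yields the claim.

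I expect the oscillation control in the compact region to be the main obstacle: because every summand carries its own perturbation $\del_{nt}$, the perturbed process is not a rigidly shifted copy of $U^*_n$, so one cannot simply invoke asymptotic equicontinuity of a single tight process; the modulus must instead be bounded term-by-term against the weights, and reconciling this with the factor $|x|$ appearing in the pointwise variance is exactly what forces the uniform-continuity and scale-Lipschitz hypotheses of (A1)(i)--(ii).
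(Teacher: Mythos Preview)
Your proposal is correct and follows essentially the same route as the paper's proof: establish pointwise convergence for each fixed $x$ via a (conditional) second-moment bound on the martingale-difference array, and then upgrade to uniformity by exploiting the monotonicity of $\mu$ separately on $(-\infty,0]$ and $[0,\infty)$ through a grid argument, exactly as the paper indicates when it refers to Mukherjee~(2007, Theorem~6.1). The paper gives only this outline, so you have in fact supplied more detail than the paper itself---in particular your explicit identification of the tail region $|x|>K$ and its control via ${\mathrm E}[\eta^2 I(|\eta|>K)]\to 0$ together with \eqref{c} and \eqref{bga2}, and your remark that the scale perturbation $x(1+\del_{nt})$ forces the half-line decomposition, are precisely the points the paper leaves implicit.
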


\begin{proof}
The proof is similar to the proof in Mukherjee (2007, Theorem 6.1). In particular, we show point-wise convergence for each $x$ and then invoke the monotone structure of the mean processes to achieve the uniform convergence. For weighted empirical, the monotonically increasing mean process is given by the distribution function. Although $\mu$ in the present case is not a monotone function on $(-\infty, \infty)$, we use its monotone property separately on $(-\infty, 0]$ and $[0, \infty)$. 
\end{proof}

We remark that this theorem is different from Koul and Ossiander (1994, Theorem 1.1) and Mukherjee (2007, Theorem 6.1) where weighted empirical processes were considered for the estimation of the mean parameters. For the estimation of the scale parameters, in this paper we consider weighted mixed-empirical process which is a weighted sum of the mixture of error and its indicator process. 
 
%The uniform closeness of different variations of the processes $\ti U_n$ and $U_n^*$ was proved in Koul and Ossiander (1994 ), under the assumption that % $G$ has uniformly continuous positive density $g$, and under (\r{ga1}), (\r{ga2}), (\r{ga3})(a) and (\r{ga4})(a) and in Mukherjee (2007, Theorem 6.1). 

The following corollary describes a Taylor-type expansion of the weighted sum of indicator functions $\ti V_n(x)$.

\begin{corollary}\label{Corollary}
Under the above setup and under the Assumptions (\r{c})-(\r{ga4}) and (A1),
\begin{equation}\label{JJ*}
\sup_{x\in \R} |\ti J_n(x)-J^*_n(x)-x^2 g(x) n^{-1/2}\s \g_{nt}\del_{nt}|=o_{\rm P}(1). 
\end{equation}
Hence,
\begin{equation}\label{VV*}
\sup_{x\in \R} |\ti V_n(x)-V_n^*(x)-x^2 g(x) n^{-1/2}\s \g_{nt}\del_{nt}|=o_{\rm P}(1). 
\end{equation}
\end{corollary}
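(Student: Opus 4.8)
The plan is to read (\ref{JJ*}) as a first-order Taylor expansion of the mean function $\mu$ in its multiplicative scale perturbation, and then to obtain (\ref{VV*}) by adding back the centred empirical part already controlled by Theorem~\ref{Thm.emp}. The starting point is that $\mu$ is differentiable with $\dot\mu(x)=xg(x)$ by the fundamental theorem of calculus, so that the target coefficient $x^{2}g(x)=x\,\dot\mu(x)$ is exactly the $s$-derivative of $\mu(x+xs)$ at $s=0$ multiplied by $x$. Writing the per-index increment in integral form,
$$
\mu(x+x\del_{nt})-\mu(x)=x\del_{nt}\int_{0}^{1}(x+hx\del_{nt})\,g(x+hx\del_{nt})\,dh,
$$
the leading term $x^{2}g(x)\del_{nt}=x\del_{nt}\cdot xg(x)$ is precisely what the claimed expansion subtracts, leaving the remainder $R_{nt}(x):=\mu(x+x\del_{nt})-\mu(x)-x^{2}g(x)\del_{nt}$ with the uniform-in-$x$ bound
$$
|R_{nt}(x)|\le |\del_{nt}|\,|x|\int_{0}^{1}\bigl|(x+hx\del_{nt})g(x+hx\del_{nt})-xg(x)\bigr|\,dh .
$$

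Next I would introduce the deterministic modulus $r(\delta):=\sup_{x\in\R}|x|\int_{0}^{1}|(x+hx\delta)g(x+hx\delta)-xg(x)|\,dh$, which is exactly the quantity that Assumption (A1)(ii) forces to $0$ as $\delta\to0$. Then $|R_{nt}(x)|\le|\del_{nt}|\,r(\del_{nt})$ uniformly in $x$, and since $\ti J_n(x)-J^*_n(x)-x^{2}g(x)n^{-1/2}\s\g_{nt}\del_{nt}=n^{-1/2}\s\g_{nt}R_{nt}(x)$, the triangle inequality followed by the supremum gives
$$
\sup_{x\in\R}\Bigl|\ti J_n(x)-J^*_n(x)-x^{2}g(x)n^{-1/2}\s\g_{nt}\del_{nt}\Bigr|\le \bar r(\Delta_n)\,n^{-1/2}\s|\g_{nt}\del_{nt}|,
$$
where $\Delta_n:=\max_{1\le t\le n}|\del_{nt}|$ and $\bar r(\Delta):=\sup_{|\delta|\le\Delta}r(\delta)$, the two-sided limit in (A1)(ii) promoting $r(\delta)\to0$ to $\bar r(\Delta)\to0$ with $\bar r(0)=0$. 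Now (\ref{ga3}) gives $\Delta_n=o_{\rm P}(1)$, hence $\bar r(\Delta_n)=o_{\rm P}(1)$ by continuity of $\bar r$ at $0$, while (\ref{ga4}) gives $n^{-1/2}\s|\g_{nt}\del_{nt}|=O_{\rm P}(1)$; the product is $o_{\rm P}(1)$, which is (\ref{JJ*}).

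Finally, (\ref{VV*}) follows from the exact decomposition $\ti V_n(x)-V^*_n(x)=[\ti U_n(x)-U^*_n(x)]+[\ti J_n(x)-J^*_n(x)]$, valid since $\ti U_n=\ti V_n-\ti J_n$ and $U^*_n=V^*_n-J^*_n$: Theorem~\ref{Thm.emp} makes the first bracket $o_{\rm P}(1)$ uniformly in $x$, and (\ref{JJ*}) handles the second, yielding the stated expansion. I expect the only genuinely delicate point to be the uniform-in-$x$ control of the Taylor remainder. Because the perturbation acts multiplicatively, $x\mapsto x(1+\del_{nt})$, smoothness of $g$ alone does not force $R_{nt}$ to vanish uniformly over the whole line; this is exactly why (A1)(ii) is imposed in the precise integral form above rather than as a pointwise differentiability hypothesis, and once that modulus is available the remaining work is bookkeeping with the weight and perturbation conditions (\ref{ga3})--(\ref{ga4}).
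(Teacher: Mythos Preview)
Your proof is correct and follows essentially the same route as the paper's: both use the integral-form Taylor expansion $\mu(x+x\del_{nt})-\mu(x)=x\del_{nt}\int_0^1(x+hx\del_{nt})g(x+hx\del_{nt})\,dh$, subtract the leading term $x^2g(x)\del_{nt}$, and control the remainder via Assumption~(A1)(ii) together with~(\ref{ga4}), then deduce~(\ref{VV*}) from~(\ref{JJ*}) and Theorem~\ref{Thm.emp}. Your version is in fact more explicit than the paper's---you carefully isolate the modulus $r(\delta)$ and invoke~(\ref{ga3}) to pass from $\Delta_n=o_{\rm P}(1)$ to $\bar r(\Delta_n)=o_{\rm P}(1)$, a step the paper leaves implicit.
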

\begin{proof}
Here (\ref{VV*}) follows from (\ref{JJ*}) and (\ref{ti*}). Therefore, it remains to prove (\ref{JJ*}). 
Notice that the LHS of (\ref{JJ*}) equals
\begin{align*}
& \sup_{x\in \R} \left|n^{-1/2}\s \gamma_{nt} \left[ x \int_{x}^{x + x\del_{nt}}   s g(s) ds  - x^2 g(x) \delta_{nt} \right]  \right| \\
=& \sup_{x\in \R} \left|n^{-1/2}\s \gamma_{nt}  \delta_{nt} \left[ x \int_{0}^{1}   (x + hx \delta_{nt}) g(x + hx \delta_{nt}) dh - x^2 g(x) \right]  \right| \\
= & o_{\rm P} (1)
\end{align*}
due to (\ref{ga4}) and Assumption (A1).
\end{proof}

The next theorem provides an extended version of (\r{ti*}) when the weights are functions on appropriately scaled parameter space. We define the following processes of two arguments as follows.
  
{\bf Probabilistic framework}: Let $\{\eta_t,\,1\le t\le n\}$ be  i.i.d. with the c.d.f. $G$, $\{l_{nt};1\le t \le n\}$ be an array of measurable  functions from $\R^m$ to $\R$ such that for every $\bb\in \R^m$ and $1\le t\le n$, $(l_{nt}(\bb), \, u_{nt}(\bb))$ are
independent of $\eta_t$. For $x\in \R$ and $\bb \in \R^m$, let
\benrr
&& \ti{\cal V}(x,\bb):= n^{-1/2}\s \, l_{nt}(\bb) \eta_t I\Big(\eta_t< x+xu_{nt}(\bb)\Big),\\ &&
\ti {\cal J}(x,\bb):= n^{-1/2}\s \, l_{nt}(\bb)\,\mu\Big(x+xu_{nt}(\bb)\Big),\nn \\
&&\ti {\cal U}(x,\bb):= \ti{\cal V}(x,\bb)- \ti{\cal J}(x,\bb), \nn \\
&& {\cal V}^*(x,\bb):= n^{-1/2}\s \, l_{nt}(\bb) \eta_t I(\eta_t< x), \, {\cal J}^*(x,\bb):= n^{-1/2}\s \, l_{nt}(\bb)\,\mu(x),\nn \\
&& {\cal U}^*(x,\bb):= {\cal V}^*(x,\bb)- {\cal J}^*(x,\bb)=n^{-1/2}\s l_{nt}(\bb) \Big[\eta_t I(\eta_t < x)-\mu(x)\Big]. 
\eenrr
Here ${\cal U}^*(\cdot,\cdot)$ is a sequence of ordinary non-perturbed weighted mixed-empirical processes with weights $\{l_{nt}(\cdot)\}$ and $\ti {\cal U}(\cdot, \cdot)$ is a sequence of perturbed weighted mixed-empirical processes with scale perturbations $\{u_{nt}(\cdot)\}$. In Theorem \ref{UUbb} below it is shown that 
$\ti {\cal U}$ can be uniformly approximated by ${\cal U}^*$ under the following conditions (\r{cnt})-(\r{ll}) for $\{l_{nt}(\cdot)\}$ and $\{u_{nt}(\cdot)\}$. Note that the statements on assumptions and convergence hold point-wise for each fixed $\bb\in \R^m$. 

There exist numbers $q > 2$ and $a$ (both free from $\bb$) satisfying $0<a<q/2$ such that with $C_n(\bb):=\s {\rm E}|l_{ni}(\bb)|^q$, 
\begin{equation}\label{cnt}
C_n(\bb)/n^{q/2-a}=o(1), \,\,\, \mbox{for each}\,\,\, \bb \in \R^m.
\end{equation}
For some positive random process $\ell(\bb)$,
\benr &&\left(n^{-1}\s \,l^2_{nt}(\bb)\right)^{1/2}
=\ell(\bb)+o_{\rm P}(1),\hskip 1in \bb\in \R^m.\label{l1}\\
&&  {\mathrm E}\left(n^{-1}\s l_{ni}^2(\bb) \right)^{q/2}=O(1),\hskip 1.55in
\bb\in \R^m.\label{11*}\\
&&  \max_{1\le t\le n} n^{-1/2}|l_{nt}(\bb)|=o_{\rm P}(1),\hskip 1.55in \bb\in \R^m. \label{l2}\\
&& \max_{1\le t\le n} \{|u_{nt}(\bb)|\}=o_{\rm P}(1),\hskip 1.1in
\bb\in \R^m.\label{g3}\\
&&  \frac{n^{q/2-a}}{C_n(\bb)}{\mathrm E}\left[ n^{-1} \s
l_{nt}^2(\bb)|u_{nt}(\bb)|\right]^{q/2}=o(1),\hskip 1.0in
\bb\in \R^m.\label{g5*}\\
&&  n^{-1/2}\s \,l_{nt}(\bb)u_{nt}(\bb)=O_{\rm P}(1), \quad \bb\in \R^m.\label{g5}\\
&& \forall\, \, b \, \mbox{and}\, \varepsilon>0, \,\exists\,
\del>0,\,\,\mbox{and}\,\, n_1 \in \NN \, \mbox{whenever}\, \|\bs\|\le
b,\, \mbox{and}\,
 n>n_1,\label{kk} \\
&& P\Bigg(n^{-1/2}\s
\,|l_{nt}(\bs)|\,\Big\{\sup_{\|\bt-\bs\|<\del}|u_{nt}(\bt)- u_{nt}(\bs)|
 \Big\}\le \varepsilon\Bigg)>1-\varepsilon.\nn \\ 
&& \forall\, \, b \, \mbox{and}\, \varepsilon>0,\,
\exists \,\, \del>0,\,\,\mbox{and}\,\,
n_2 \in \NN \,\mbox{whenever}\, \|\bs\|\le
b,\, \mbox{and}\,
 n>n_2,\label{ll}\\ &&P\left(\sup_{\|\bt-\bs\|\le \del}n^{-1/2}\s |
l_{nt}(\bt)-l_{nt}(\bs) |\le \varepsilon\right)> 1-\varepsilon.\nn
\eenr

Conditions (\r{cnt})-(\r{ll}) are regularity conditions on the weights and perturbations of the two-parameters empirical processes. Conditions (\r{kk})-(\r{ll}) are smoothness conditions on the weights and perturbations. Under stationarity and ergodicity, many of these conditions reduce to much simpler conditions based on existence of the moments. 

%{\bf We also need the following additional condition on $G$.}  
%$$
%\lim_{\delta \to 0} \,\, \sup\{x^2 \int_0^1 [|g(x+tx\delta)-g(x)+ x(1+\delta) g^{'}(x+tx\delta)-xg^{'}(x)|] dt; x \in \R\}=0
%$$
%and
%$$
%\sup\{x^2 \{g(x)+ xg^{'}(x) \}; x \in \R\}=O(1).
%$$
The following theorem generalizes (\ref{ti*}) when the weights are functions of $\bb$. 

\begin{theorem}\label{UUbb}
Under the above framework, suppose that conditions (\r{cnt})-(\r{ll}) and Assumption
(A1) hold. Then for every $0<b<\iny$,
\benr
\sup_{x\in \R,\| \bb \|\le b} |\ti
{\cal U}(x, \bb)-{\cal U}^*(x, \bb)|&=&o_{\rm P}(1).\label{uu*}
\eenr
\end{theorem}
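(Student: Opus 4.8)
The plan is to deduce the uniform-in-$(x,\bb)$ statement (\ref{uu*}) from the single-parameter result (\ref{ti*}) of Theorem~\ref{Thm.emp} by an asymptotic-equicontinuity argument over the compact set $\{\bb:\|\bb\|\le b\}$, in the spirit of the two-parameter construction of Mukherjee~(2007). First I would observe that for each fixed $\bb\in\R^m$ the array $\{l_{nt}(\bb)\}$ and the perturbations $\{u_{nt}(\bb)\}$ play exactly the roles of $\{\g_{nt}\}$ and $\{\del_{nt}\}$ in Theorem~\ref{Thm.emp}: conditions (\ref{cnt}), (\ref{l1}), (\ref{11*}), (\ref{l2}), (\ref{g3}), (\ref{g5*}) and (\ref{g5}) are precisely the $\bb$-indexed analogues of (\ref{c})--(\ref{ga4}), and the independence of $\eta_t$ from $(l_{nt}(\bb),u_{nt}(\bb))$ supplies the required measurability. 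Hence (\ref{ti*}) applies at each fixed $\bb$, yielding the pointwise conclusion
$$
\sup_{x\in\R}|\ti{\cal U}(x,\bb)-{\cal U}^*(x,\bb)|=o_{\rm P}(1),\qquad \bb\in\R^m.
$$

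To promote this to uniformity over $\|\bb\|\le b$, I would cover the compact ball by finitely many balls of radius $\del$ centred at $\bb_1,\ldots,\bb_N$ and, for $\bb$ in the ball about $\bb_k$, use
$$
|\ti{\cal U}(x,\bb)-{\cal U}^*(x,\bb)|\le|\ti{\cal U}(x,\bb)-\ti{\cal U}(x,\bb_k)|+|\ti{\cal U}(x,\bb_k)-{\cal U}^*(x,\bb_k)|+|{\cal U}^*(x,\bb_k)-{\cal U}^*(x,\bb)|.
$$
Taking the supremum over $x$ and over $\bb$ in each ball, the middle term is dispatched by the pointwise result at the finitely many centres. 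It therefore remains to establish asymptotic equicontinuity, namely that the two fluctuation terms are made uniformly small in probability by choosing $\del$ small and then letting $n\to\iny$.

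For the non-perturbed process, the bound $|\eta_tI(\eta_t<x)-\mu(x)|\le|\eta_t|+|\mu(0)|$ gives
$$
\sup_{x\in\R}|{\cal U}^*(x,\bb)-{\cal U}^*(x,\bb_k)|\le n^{-1/2}\s|l_{nt}(\bb)-l_{nt}(\bb_k)|(|\eta_t|+|\mu(0)|);
$$
conditioning on the weights and using ${\mathrm E}|\eta|<\iny$ (from (A1)(iii)) together with the weight-smoothness condition (\ref{ll}) renders this uniformly small. For the perturbed process I would write the increment as
\begin{align*}
\ti{\cal V}(x,\bb)-\ti{\cal V}(x,\bb_k)&=n^{-1/2}\s[l_{nt}(\bb)-l_{nt}(\bb_k)]\eta_tI(\eta_t<x+xu_{nt}(\bb))\\
&\quad+n^{-1/2}\s l_{nt}(\bb_k)\eta_t\big[I(\eta_t<x+xu_{nt}(\bb))-I(\eta_t<x+xu_{nt}(\bb_k))\big],
\end{align*}
and pass to the compensated form by subtracting the corresponding increments of $\ti{\cal J}$. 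The first summand is controlled exactly as in the non-perturbed case via (\ref{ll}); the compensator of the second is bounded, using the uniform Lipschitz-in-scale property of $\mu$ noted after (A1) (so that $|\mu(x+xu_{nt}(\bb))-\mu(x+xu_{nt}(\bb_k))|\le c|u_{nt}(\bb)-u_{nt}(\bb_k)|$ once (\ref{g3}) keeps the scale factor bounded), by $c\,n^{-1/2}\s|l_{nt}(\bb_k)||u_{nt}(\bb)-u_{nt}(\bb_k)|$, which is uniformly small by the perturbation-smoothness condition (\ref{kk}).

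The main obstacle is the remaining martingale (indicator-minus-compensator) part of the second summand, which must be shown to be $o_{\rm P}(1)$ uniformly in both $x$ and $\bb$. Here I would re-run the monotone-structure argument underlying Theorem~\ref{Thm.emp}: exploiting that $\mu$ is monotone separately on $(-\infty,0]$ and $[0,\iny)$ and invoking a maximal inequality for the compensated weighted mixed-empirical increments, the oscillation over the $\del$-ball is bounded by a quantity that vanishes as $\del\to0$ after $n\to\iny$. Combining the equicontinuity of both fluctuation terms with the finite-centre pointwise convergence through a standard $\ep$--$\del$ argument then yields (\ref{uu*}).
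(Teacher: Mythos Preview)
Your proposal is correct and follows essentially the same route as the paper: first invoke Theorem~\ref{Thm.emp} at each fixed $\bb$ to obtain pointwise (in $\bb$) uniformity over $x$, then lift to uniformity over the compact ball $\{\|\bb\|\le b\}$ via an asymptotic-equicontinuity argument driven by the smoothness conditions (\ref{kk}) and (\ref{ll}). The paper's own proof is in fact terser than yours---it simply cites Mukherjee~(2007, Lemma~3.2) for the equicontinuity step---so your decomposition into the three fluctuation pieces and the separate treatment of the weight-difference, compensator-difference, and residual martingale parts amounts to unpacking what that reference contains.
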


\begin{proof}
Clearly, under conditions  (\ref{cnt})-(\ref{g5}), Theorem~\ref{Thm.emp} entails that for each fixed $\bb$,
$$\sup_{x\in \R} |\ti {\cal U}(x, \bb)-{\cal U}^*(x, \bb)| = o_{\rm P}(1).$$
The uniform convergence with respect to $\bb$ over compact sets can be proved as in Mukherjee~(2007, Lemma 3.2) using conditions (\ref{kk}) and (\ref{ll}).
\end{proof}

The following facts are useful in the proofs of various results of this paper. Let $m=1+p+q$ be the total number of parameters and fix $\bb \in \R^m$. Let $\bth_{n\vp}=\bth_{0\vp}+n^{-1/2}\bb$,  
\begin{equation}\label{vntbb}
u_{nt}(\bb)= \frac{v_t^{1/2}(\bth_{n\vp})}{v_t^{1/2}(\bth_{0\vp})}-1, \,\,
v_{nt}(\bb) = \frac{v_t^{1/2}(\bth_{0\vp})}{v_t^{1/2}(\bth_{n\vp})} - 1.
\end{equation} 
Then $\{u_{nt}(\bb)\}$ satisfies (\ref{g3}) since
\begin{equation}\label{vntbb1}
u_{nt}(\bb)=  \frac{v_t(\bth_{n\vp})-v_t(\bth_{0\vp})} {v^{1/2}_t(\bth_{0\vp})\{v_t^{1/2}(\bth_{n\vp})+v_t^{1/2}(\bth_{0\vp})\}}
= \frac{n^{-1/2} \d{\mbf v}_t^\prime (\bth^*) \bb} {v^{1/2}_t(\bth_{0\vp})\{v_t^{1/2}(\bth_{n\vp})+v_t^{1/2}(\bth_{0\vp})\}},  
\end{equation} 
for some $\bth^*=\bth^*(n, t, \bb)$ in the neighbourhood of $\bth_{0\vp}$ for large $n$. The 
$n^{-1/2}$-factor is used later for deriving convergence of some sequence of random vectors. Similarly, for some $\bth^*$,
\begin{equation}\label{vntbb2}
v_{nt}(\bb)=  \frac{v_t(\bth_{0\vp})-v_t(\bth_{n\vp})}{v^{1/2}_t(\bth_{n\vp})\{v_t^{1/2}(\bth_{n\vp})+v_t^{1/2}(\bth_{0\vp})\}}
= \frac{-n^{-1/2} \d{\mbf v}_t^\prime (\bth^*)\bb} {v^{1/2}_t(\bth_{n\vp})\{v_t^{1/2}(\bth_{n\vp})+v_t^{1/2}(\bth_{0\vp})\}}
=n^{-1/2} \xi_{nt},  
\end{equation} 
say. Let $a_{nt}(\bb)=v_t^{1/2}(\bth_{0\vp})/v_t^{1/2}(\bth_{n\vp})=1+v_{nt}(\bb)=1+n^{-1/2} \xi_{nt}$. Then 
$$
\frac{X_t}{v_t^{1/2}(\bth_{n\vp})}=a_{nt}(\bb)\eta_t=\eta_t+ n^{-1/2}\eta_t\xi_{nt}=\eta_t+ n^{-1/2}z_{nt},
$$
where
$$
z_{nt}=\eta_t\xi_{nt}=\eta_t \times \frac{-\d{\mbf v}_t^\prime (\bth^*)\bb} {v^{1/2}_t(\bth_{n\vp})\{v_t^{1/2}(\bth_{n\vp})+v_t^{1/2}(\bth_{0\vp})\}}.
$$
For $\delta >0$ in Assumption (A1) and any $c>0$,
\begin{equation*}
\begin{split}
P\left[ n^{-1/2}  \underset{1 \leq t \leq n}{\max} |z_{nt}| > c \right] \leq \s P\left[ n^{-1/2}  |z_{nt}| > c \right] 
\leq n \frac{{\mathrm E}\left[n^{-1-\delta/2} |\eta_t|^{2+\delta} |\xi_{nt}|^{2+\delta} \right]}{c^{2+\delta}}=o(1)
\end{split}
\end{equation*}
since all moments of $\{|\xi_{nt}|\}$ are finite and $\eta_t$ and $\xi_{nt}$ are independent for all $t$. Therefore
\begin{equation}\label{anvn}
\underset{1 \leq t \leq n}{\max} \left| \frac{X_t}{v_t^{1/2}(\bth_{n\vp})} -\eta_t \right| = o_{\rm P}(1).
\end{equation}
If $\d{\mbf v}_t(\bth_{n\vp})/v_t(\bth_{n\vp})$ appears as the coefficients, we replace it by $\d{\mbf v}_t(\bth_{0\vp})/v_t(\bth_{0\vp})$ and the difference is controlled as follows. Notice that all derivatives below exist with bounded moments and so 
\begin{equation}\label{vvdot.diff}
\frac{\d{\mbf v}_t(\bth_{n\vp})}{v_t(\bth_{n\vp})}-\frac{\d{\mbf v}_t(\bth_{0\vp})}{v_t(\bth_{0\vp})}
=n^{-1/2}\A_t(\bth_{0\vp})\bb+ n^{-1}\A_{tn}^*,
\end{equation}
where $\A_t(\bth_{0\vp})= \ddot{\mbf v}_t(\bth_{0\vp})/v_t(\bth_{0\vp})-\d{\mbf v}_t(\bth_{0\vp})\d{\mbf v}_t^{'}(\bth_{0\vp})/\{v_t(\bth_{0\vp})\}^2$. Only the term $n^{-1/2}\A_t(\bth_{0\vp})\bb$ is of our interest since others are of higher order than $n$. 

Take $l_{nt}(\bb)$ to be equal to the $j$-th coordinate $(1 \le j \le m=1+p+q)$ of
\begin{equation}\label{LN}
{\mbf L}_{nt}(\bb) = \frac{\d{\mbf v}_t(\bth_{n\vp})}{v_t(\bth_{n\vp})} \times \frac{v_t^{1/2}(\bth_{0\vp})}{v_t^{1/2}(\bth_{n\vp})}   
\end{equation}
and $u_{nt}(\bb)$ as in (\ref{vntbb}). We now show that (\ref{l1})-(\ref{ll}) hold with such choice.

For each $t$ with $1\le t\le n$, $\{{\mbf L}_{nt}(\bb), \, u_{nt}(\bb)\}$ are independent of $\eta_t$.
Using a Taylor expansion of  $l_{nt}(\bb)$ at $\bth_{0\vp}$ for each $1 \le t \le n$ and noting the existence of all moments of 
$v_t(\bth_{0\vp})$ and its derivatives of all higher orders, (\ref{l1}) and (\ref{11*}) hold. Existence of all higher moments of $\{l_{nt}(\bb), \, u_{nt}(\bb)\}$ ensure conditions (\r{l2})-(\r{g5*}).  

To verify (\ref{g5}), we use (\ref{vntbb1}) and that for each $t$, $v_t(\cdot)$ is a smooth function with derivatives of all order to conclude that
$$
n^{-1/2} \s {\mbf L}_{nt}(\bb)u_{nt}(\bb)={\mathrm E}[ \d{\mbf v}_1(\bth_{0\vp}) \d{\mbf v}_1^\prime(\bth_{0\vp})/v_1^2(\bth_{0\vp})] (\bb/2) +o_{\rm P}(1)=
\J(\bth_{0\vp}) \bb/2+o_{\rm P}(1).
$$ 
Conditions (\ref{kk}) and (\ref{ll}) can be verified using the mean value theorem.

%{\bf This establishes that the above Theorem \ref{UUbb} holds with the above choice of $\{ {\mbf L}_{nt}(\bb)$, $u_{nt}(\bb) \}$.}\\
The following lemmas and their proofs represent the intermediate steps in the proofs of Proposition~\ref{Prop.asy.linear} and Theorem~\ref{thm.asy}.

\textbf{Lemma \ref{lem.Tn21M}, Lemma \ref{lem.Tn1N}, Lemma \ref{lemmaRT.diff} and Lemma \ref{lem.RRhat}.} 

Let 
$$
\T_{n1}(\bth_{n\vp})=n^{-1/2}\s \, \frac{\d{\mbf v}_t(\bth_{n\vp})}{v_t(\bth_{n\vp})}\left\lbrace 1 -\frac{X_t}{v_t^{1/2}(\bth_{n\vp})}\vp[ G(\eta_t)] \right\rbrace, 
$$
$$
\T_{n2}(\bth_{n\vp})=n^{-1/2}\s \, \frac{\d{\mbf v}_t(\bth_{n\vp})}{v_t(\bth_{n\vp})}\left\lbrace    1-\frac{X_t}{v_t^{1/2}(\bth_{n\vp})}\vp\left[ G\left(\frac{X_t}{v_t^{1/2}(\bth_{n\vp})}\right) \right] \right\rbrace 
$$
and note that the difference in the definitions of these two quantities lies only in the argument of $\vp(G(.))$. We show in 
Lemma \ref{lem.Tn21M} below that $\int_0^1 [\ti{\cal V}(u,\bb)-{\cal V}^*(u,\bb)] d\vp(u)=\T_{n1}(\bth_{n\vp})-\T_{n2}(\bth_{n\vp})$. Using results on empirical processes in Theorem \ref{UUbb}, $\int_0^1 [\ti{\cal V}(u,\bb)-{\cal V}^*(u,\bb)] d\vp(u)$ is linear in $\bb$. Consequently, we obtain the following uniform approximations of $\T_{n1}(\bth_{n\vp})-\T_{n2}(\bth_{n\vp})$ over $||\bb|| \le c$ where $c>0$. 
%A sequence of stochastic process $\{Y_n(\cdot)\}$ is said to be $u_{\rm P}(1)$ (denoted by $Y_n=u_{\rm P}(1)$) if for every $c>0$, $\sup \{|Y_n(\bb); ||\bb|| \le c\}=o_{\rm P}(1)$.

\begin{lemma}\label{lem.Tn21M}
Let Assumptions (A1)-(A3) hold. Then, as $n \rightarrow \infty$,
\begin{equation}\label{Tn21M}
\T_{n2}(\bth_{n\vp}) - \T_{n1}(\bth_{n\vp}) = \M(\bth_{0\vp}) \bb + u_{\rm P}(1),
\end{equation}
where $\M(\bth_{0\vp}) = \J(\bth_{0\vp}) \rho(\vp)/2$.
\end{lemma}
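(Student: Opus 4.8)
The plan is to rewrite the vector difference $\T_{n2}(\bth_{n\vp}) - \T_{n1}(\bth_{n\vp})$ as an integral of a scale-perturbed weighted mixed-empirical process against the score measure $d\vp$, and then to invoke Theorem~\ref{UUbb} together with the expansion of Corollary~\ref{Corollary}. The two statistics differ only in the argument of $\vp\circ G$, so their difference is $n^{-1/2}\s (\d{\mbf v}_t(\bth_{n\vp})/v_t(\bth_{n\vp}))(X_t/v_t^{1/2}(\bth_{n\vp}))\{\vp[G(X_t/v_t^{1/2}(\bth_{n\vp}))] - \vp[G(\eta_t)]\}$. Using $X_t/v_t^{1/2}(\bth_{0\vp}) = \eta_t$ one has $X_t/v_t^{1/2}(\bth_{n\vp}) = a_{nt}(\bb)\eta_t$, so for every $x$ the event $\{X_t/v_t^{1/2}(\bth_{n\vp}) > x\}$ coincides with $\{\eta_t > x + x\,u_{nt}(\bb)\}$, which is exactly the scale perturbation entering $\ti{\cal V}$. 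Representing the bounded non-decreasing score through $\vp(G(s)) = \mathrm{const} + \int_0^1 I(s > G^{-1}(u))\,d\vp(u)$ (valid under (A3)) and taking $l_{nt}(\bb)$ to be the coordinates of ${\mbf L}_{nt}(\bb)$ of (\ref{LN}), the coefficient collapses to ${\mbf L}_{nt}(\bb)\eta_t$; since $G$ is continuous and ties thus have probability zero, I would obtain the almost-sure identity
\begin{equation*}
\T_{n2}(\bth_{n\vp}) - \T_{n1}(\bth_{n\vp}) = \int_0^1 \left[\ti{\cal V}(G^{-1}(u),\bb) - {\cal V}^*(G^{-1}(u),\bb)\right] d\vp(u).
\end{equation*}

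Next I would decompose $\ti{\cal V} - {\cal V}^* = (\ti{\cal U} - {\cal U}^*) + (\ti{\cal J} - {\cal J}^*)$. The present choice of $l_{nt}(\bb)$ and $u_{nt}(\bb)$ has already been verified to satisfy conditions (\ref{l1})--(\ref{ll}), so Theorem~\ref{UUbb} yields $\sup_{x\in\R,\,\|\bb\|\le c}|\ti{\cal U}(x,\bb) - {\cal U}^*(x,\bb)| = o_{\rm P}(1)$. Under (A3) the Stieltjes measure $d\vp$ is finite on $(0,1)$, so integrating this uniform bound against $d\vp$ gives $\int_0^1[\ti{\cal U}(G^{-1}(u),\bb) - {\cal U}^*(G^{-1}(u),\bb)]\,d\vp(u) = u_{\rm P}(1)$; this piece is asymptotically negligible.

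For the drift piece I would apply the two-parameter analogue of Corollary~\ref{Corollary}: writing $\mu(x + x u_{nt}(\bb)) - \mu(x) = x^2 g(x)\,u_{nt}(\bb) + r_{nt}(x,\bb)$ and bounding the remainder by Assumption (A1)(ii) yields $\sup_{x}|\ti{\cal J}(x,\bb) - {\cal J}^*(x,\bb) - x^2 g(x)\,n^{-1/2}\s l_{nt}(\bb)u_{nt}(\bb)| = o_{\rm P}(1)$, uniformly over $\|\bb\|\le c$ by the smoothness conditions (\ref{kk})--(\ref{ll}). Integrating against $d\vp$ and recognising $\rho(\vp) = \int_0^1\{G^{-1}(u)\}^2 g\{G^{-1}(u)\}\,d\vp(u)$, together with the limit $n^{-1/2}\s {\mbf L}_{nt}(\bb)u_{nt}(\bb) = \J(\bth_{0\vp})\bb/2 + o_{\rm P}(1)$ already established while verifying (\ref{g5}), gives $\int_0^1[\ti{\cal J} - {\cal J}^*]\,d\vp(u) = \rho(\vp)\,\J(\bth_{0\vp})\bb/2 + u_{\rm P}(1)$. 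Collecting the two pieces then produces $\T_{n2}(\bth_{n\vp}) - \T_{n1}(\bth_{n\vp}) = (\rho(\vp)/2)\,\J(\bth_{0\vp})\bb + u_{\rm P}(1) = \M(\bth_{0\vp})\bb + u_{\rm P}(1)$, which is (\ref{Tn21M}).

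The main obstacle I anticipate is controlling the behaviour near $u = 0$ and $u = 1$, where $G^{-1}(u) \to \pm\infty$, and upgrading the fixed-$\bb$ statements to uniformity over $\|\bb\|\le c$. The boundedness of $x^2 g(x)$ from (A1)(i) is precisely what keeps the drift kernel bounded so that the $o_{\rm P}(1)$ remainder survives integration against the finite measure $d\vp$, while the uniform continuity of the $\mu^{-1}$-transformed maps in (A1)(i) and the smoothness conditions (\ref{kk})--(\ref{ll}) are what promote the pointwise empirical-process approximations to the uniform $u_{\rm P}(1)$ statements. Verifying that the remainder $r_{nt}(x,\bb)$ is negligible after both summation in $t$ and integration in $u$---rather than merely pointwise in $x$---is the delicate technical step.
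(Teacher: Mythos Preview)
Your proposal is correct and follows essentially the same route as the paper: representing $\T_{n2}-\T_{n1}$ as $\int_0^1[\ti{\cal V}-{\cal V}^*]\,d\vp(u)$ via the Stieltjes representation of $\vp\circ G$, splitting into the centered piece $\ti{\cal U}-{\cal U}^*$ (handled by Theorem~\ref{UUbb}) and the drift piece $\ti{\cal J}-{\cal J}^*$ (handled by the two-parameter analogue of Corollary~\ref{Corollary}), and then identifying $\rho(\vp)\,\J(\bth_{0\vp})\bb/2$ after integration. The paper writes the indicator with the ``$<$'' convention rather than your ``$>$'' convention, which only flips an overall sign that cancels in the difference; otherwise the argument, the choice of $l_{nt}(\bb)={\mbf L}_{nt}(\bb)$ and $u_{nt}(\bb)$, and the use of (A1)(i) to keep $x^2g(x)$ bounded near the endpoints are identical.
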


\begin{proof}
To use Theorem \ref{UUbb} in the proof, let $\bb=n^{1/2}(\bth_{n\vp}-\bth_{0\vp})$ and $x=G^{-1}(u)$ for some $0<u<1$. For simplicity, we use the notation $\ti{\cal V}(u,\bb)$ to denote $\ti{\cal V}(G^{-1}(u),\bb)$ which is defined in the probabilistic framework above. Accordingly 
$$\ti{\cal V}(u,\bb):= n^{-1/2}\s \, \frac{\d{\mbf v}_t(\bth_{n\vp})}{v_t(\bth_{n\vp})}  \frac{X_t}{v_t^{1/2}(\bth_{n\vp})} I\left[ \eta_t< G^{-1}(u) 
\frac{v_t^{1/2}(\bth_{n\vp})}{v_t^{1/2}(\bth_{0\vp})}\right] $$
and
$${\cal V}^*(u,\bb)= n^{-1/2}\s \, \frac{\d{\mbf v}_t(\bth_{n\vp})}{v_t(\bth_{n\vp})}  \frac{X_t}{v_t^{1/2}(\bth_{n\vp})} I\left(\eta_t < G^{-1}(u) \right).
$$
With the choice based on (\ref{LN}) and (\ref{vntbb}) and using
$$
\frac{v_t^{1/2}(\bth_{0\vp})}{v_t^{1/2}(\bth_{n\vp})}\eta_t=\frac{X_t}{v_t^{1/2}(\bth_{n\vp})},
$$
\begin{eqnarray*}
\ti{\cal V}(u,\bb) &=& n^{-1/2}\s \, \frac{\d{\mbf v}_t(\bth_{n\vp})}{v_t(\bth_{n\vp})}  \frac{X_t}{v_t^{1/2}(\bth_{n\vp})} I\left[\frac{X_t}{v_t^{1/2}(\bth_{n\vp})}< G^{-1}(u) \right] \\
&=& n^{-1/2}\s \, \frac{\d{\mbf v}_t(\bth_{n\vp})}{v_t(\bth_{n\vp})}  \frac{X_t}{v_t^{1/2}(\bth_{n\vp})} I\left[ G \left(\frac{X_t}{v_t^{1/2}(\bth_{n\vp})}\right)< u \right].
\end{eqnarray*}
Similarly,
$$
{\cal V}^*(u,\bb)= n^{-1/2}\s \, \frac{\d{\mbf v}_t(\bth_{n\vp})}{v_t(\bth_{n\vp})}  \frac{X_t}{v_t^{1/2}(\bth_{n\vp})} I\left(G(\eta_t) < u \right).
$$
Since 
$$
\int_0^1 I\left\lbrace G \left(\frac{X_t}{v_t^{1/2}(\bth_{n\vp})}\right)< u \right\rbrace d\vp(u)=\vp(1)-\vp \left[ G\left(\frac{X_t}{v_t^{1/2}(\bth_{n\vp})}\right) \right],
$$ 
we get
$$
\int_0^1 \ti{\cal V}(u,\bb) d\vp(u) =n^{-1/2}\s \, \frac{\d{\mbf v}_t(\bth_{n\vp})}{v_t(\bth_{n\vp})}  \frac{X_t}{v_t^{1/2}(\bth_{n\vp})} \left\lbrace \vp(1)- \vp \left[ G\left(\frac{X_t}{v_t^{1/2}(\bth_{n\vp})}\right) \right] \right\rbrace
$$
and
$$
\int_0^1 {\cal V}^*(u,\bb) d\vp(u) =n^{-1/2}\s \, \frac{\d{\mbf v}_t(\bth_{n\vp})}{v_t(\bth_{n\vp})}   \frac{X_t}{v_t^{1/2}(\bth_{n\vp})} \Big(\vp(1)-\vp(G(\eta_t)\Big).
$$
Cancelling $\vp(1)$, $\int_0^1 [\ti{\cal V}(u,\bb)-{\cal V}^*(u,\bb)] d\vp(u)$ equals 
\begin{align*}
&\quad n^{-1/2}\s \, \frac{\d{\mbf v}_t(\bth_{n\vp})}{v_t(\bth_{n\vp})} \frac{X_t}{v_t^{1/2}(\bth_{n\vp})} \left\lbrace -\vp \left[ G\left(\frac{X_t}{v_t^{1/2}(\bth_{n\vp})}\right) \right]
+\vp(G(\eta_t)\Big) \right\rbrace \\
&= \T_{n2}(\bth_{n\vp}) - \T_{n1}(\bth_{n\vp}).
\end{align*}
Using (\ref{JJ*}) and (\ref{vntbb1}) with
$$
\ti{\cal J}(u,\bb):= n^{-1/2}\s \, \frac{\d{\mbf v}_t(\bth_{n\vp})}{v_t(\bth_{n\vp})}  \frac{v_t^{1/2}(\bth_{0\vp})}{v_t^{1/2}(\bth_{n\vp})}
\mu \left[ G^{-1}(u) 
\frac{v_t^{1/2}(\bth_{n\vp})}{v_t^{1/2}(\bth_{0\vp})}\right]
$$
$$
{\cal J}^*(u,\bb) := n^{-1/2}\s \, \frac{\d{\mbf v}_t(\bth_{n\vp})}{v_t(\bth_{n\vp})}  \frac{v_t^{1/2}(\bth_{0\vp})}{v_t^{1/2}(\bth_{n\vp})}
\mu \left( G^{-1}(u) \right),
$$
we have
$$
\sup_{u\in (0, 1)}  
|\ti{\cal J}(u,\bb)-{\cal J}^*(u,\bb)-
[G^{-1}(u)]^2 g(G^{-1}(u)) n^{-1/2}\s \frac{\d{\mbf v}_t(\bth_{n\vp})}{v_t(\bth_{n\vp})}  \frac{v_t^{1/2}(\bth_{0\vp})}{v_t^{1/2}(\bth_{n\vp})} u_{nt}(\bb)|
=u_{\rm P}(1).
$$
Also, 
$$|n^{-1/2}\s \frac{\d{\mbf v}_t(\bth_{n\vp})}{v_t(\bth_{n\vp})}  \frac{v_t^{1/2}(\bth_{0\vp})}{v_t^{1/2}(\bth_{n\vp})} u_{nt}(\bb)-\J(\bth_{0\vp})|
=u_{\rm P}(1).$$
%[G^{-1}(u)]^2 g(G^{-1}(u)) \J \bb/2 + u_{\rm P}(1).
Hence,
\begin{align}
\int_0^1 [\ti{\cal J}(u,\bb)-{\cal J}^*(u,\bb)] d\vp(u)&= \int_0^1 [G^{-1}(u)]^2 g(G^{-1}(u)) d\vp(u) \J \bb/2 +u_{\rm P}(1)  \nonumber \\
&= \M(\bth_{0\vp}) \bb +u_{\rm P}(1) \label{JJdiff}
\end{align}
by recalling that $\M(\bth_{0\vp}) = \J(\bth_{0\vp}) \rho(\vp)/2$. Finally, (\ref{Tn21M}) follows from Theorem \ref{UUbb}.
\end{proof}
The following lemma states that the difference between $\T_{n1}(\bth_{n\vp})$ and $\N_n(\bth_{0\vp})$ is asymptotically linear in $\bb$.
\begin{lemma}\label{lem.Tn1N}
Let Assumptions (A1)-(A3) hold. Then, as $n \rightarrow \infty$,
\begin{equation}\label{T1n}
\T_{n1}(\bth_{n\vp}) -\N_n(\bth_{0\vp}) = \J(\bth_{0\vp}) \bb/2+u_{\rm P}(1), 
\end{equation}
where 
\begin{equation}\label{Nn.Norm}
\N_n(\bth_{0\vp}) \rightarrow {\mathcal N} ( \0, \J(\bth_{0\vp}) \sigma^2(\vp)),
\end{equation}
with $\sigma^2(\vp)={\rm Var}\{\eta_1 \vp [G(\eta_1)] \}$.
\end{lemma}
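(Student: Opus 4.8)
The plan is to reduce everything to the scale identity $X_t/v_t^{1/2}(\bth_{0\vp}) = \eta_t$, which holds because the ARCH$(\infty)$ coefficients are homogeneous in $(\omega,\alpha)$, giving $v_t(\bth_{0\vp}) = c_\vp v_t(\bth_0)=c_\vp\sigma_t^2$ and $X_t=\sigma_t\sqrt{c_\vp}\,\eta_t$; this identity is already in force in the derivation of (\ref{anvn}). With it, $\N_n(\bth_{0\vp})$ is exactly $\T_{n1}$ evaluated at $\bth_{0\vp}$, so that the \emph{same} fixed score factor $\vp[G(\eta_t)]$ appears in both $\T_{n1}(\bth_{n\vp})$ and $\N_n(\bth_{0\vp})$. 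Consequently, unlike in Lemma~\ref{lem.Tn21M}, no nonsmooth rank-score argument is being perturbed, and the difference is driven only by the smooth dependence of $\d{\mbf v}_t/v_t$ and of $X_t/v_t^{1/2}$ on $\bth$, which is what makes a Taylor-plus-ergodic argument available.

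First I would split
\begin{equation*}
\T_{n1}(\bth_{n\vp}) - \N_n(\bth_{0\vp}) = A_n + B_n,
\end{equation*}
where $A_n = n^{-1/2}\s\{\d{\mbf v}_t(\bth_{n\vp})/v_t(\bth_{n\vp}) - \d{\mbf v}_t(\bth_{0\vp})/v_t(\bth_{0\vp})\}$ collects the coefficient change against the constant term, and $B_n = -n^{-1/2}\s\{h_t(\bth_{n\vp}) - h_t(\bth_{0\vp})\}\vp[G(\eta_t)]$ with $h_t(\bth) := \{\d{\mbf v}_t(\bth)/v_t(\bth)\}\{X_t/v_t^{1/2}(\bth)\}$, noting $h_t(\bth_{0\vp}) = \{\d{\mbf v}_t(\bth_{0\vp})/v_t(\bth_{0\vp})\}\eta_t$. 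For $A_n$, substituting (\ref{vvdot.diff}) gives $A_n = n^{-1}\s\A_t(\bth_{0\vp})\bb + u_{\rm P}(1)$, and the ergodic theorem with Fact~\ref{fact1} yields $A_n = \{{\mathrm E}(\ddot{\mbf v}_1(\bth_{0\vp})/v_1(\bth_{0\vp})) - \J(\bth_{0\vp})\}\bb + u_{\rm P}(1)$. For $B_n$ I would write $h_t(\bth_{n\vp}) - h_t(\bth_{0\vp})$ as a coefficient-change piece plus an observation-change piece, using (\ref{vvdot.diff}) for the former and the expansion $X_t/v_t^{1/2}(\bth_{n\vp}) = \eta_t + n^{-1/2}\eta_t\xi_{nt}$ from just before (\ref{anvn}) for the latter. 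After replacing the random intermediate points by $\bth_{0\vp}$, the leading term is $n^{-1/2}\eta_t\{\ddot{\mbf v}_t(\bth_{0\vp})/v_t(\bth_{0\vp}) - (3/2)\,\d{\mbf v}_t(\bth_{0\vp})\d{\mbf v}_t^{'}(\bth_{0\vp})/v_t^2(\bth_{0\vp})\}\bb$.

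The decisive simplification is that $\eta_t\vp[G(\eta_t)]$ is independent of the past-measurable matrix coefficients, while the identification condition (\ref{eq:IdenCon}) gives ${\mathrm E}\{\eta_t\vp[G(\eta_t)]\} = 1$ (since $G(\eta_t)=F(\epsilon_t)$ and $\eta_t=\epsilon_t/\sqrt{c_\vp}$). Hence the ergodic theorem gives $B_n = -\{{\mathrm E}(\ddot{\mbf v}_1(\bth_{0\vp})/v_1(\bth_{0\vp})) - (3/2)\J(\bth_{0\vp})\}\bb + u_{\rm P}(1)$, and in $A_n + B_n$ the $\ddot{\mbf v}$-expectations cancel, leaving exactly $\J(\bth_{0\vp})\bb/2$, which is (\ref{T1n}). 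For the normality (\ref{Nn.Norm}), I would observe that each summand $\{\d{\mbf v}_t(\bth_{0\vp})/v_t(\bth_{0\vp})\}\{1 - \eta_t\vp[G(\eta_t)]\}$ is a stationary, ergodic martingale-difference array relative to $\mathcal{F}_{t-1} = \sigma(\eta_{t-1},\eta_{t-2},\ldots)$, because the coefficient is $\mathcal{F}_{t-1}$-measurable and, by (\ref{eq:IdenCon}), the conditional mean vanishes. The martingale central limit theorem then applies, the limiting covariance being ${\mathrm E}(\d{\mbf v}_1(\bth_{0\vp})\d{\mbf v}_1^{'}(\bth_{0\vp})/v_1^2(\bth_{0\vp}))\,{\mathrm E}\{1 - \eta\vp[G(\eta)]\}^2 = \J(\bth_{0\vp})\sigma^2(\vp)$ by independence, where $\sigma^2(\vp) = {\mathrm E}\{\eta\vp[G(\eta)]\}^2 - 1 = {\rm Var}\{\eta_1\vp[G(\eta_1)]\}$; the Lindeberg condition follows from Assumption~(A3), Assumption~(A1)(iii), and Fact~\ref{fact1}.

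The step I expect to be the main obstacle is the uniform (in $\|\bb\|\le c$) control of the replacement errors in $B_n$: replacing $\d{\mbf v}_t(\bth_{n\vp})/v_t(\bth_{n\vp})$ and $\xi_{nt}$, which carry the random intermediate point $\bth^*$, by their values at $\bth_{0\vp}$, and showing that the resulting second-order remainders sum to $u_{\rm P}(1)$ rather than merely to a pointwise $o_{\rm P}(1)$. This is where the uniform moment bounds of Fact~\ref{fact1} (applied also to the relevant higher derivatives of $v_t$), the almost-sure smoothness of $\bth\mapsto v_t(\bth)$, and the ergodic theorem are used together; each remainder gains an extra $n^{-1/2}$ from the Taylor expansion while the corresponding average $n^{-1}\s(\cdot)$ stays $O_{\rm P}(1)$, so the net contribution is $O_{\rm P}(n^{-1/2})$.
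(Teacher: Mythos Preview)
Your proof is correct and uses the same core ingredients as the paper's: the Taylor expansion (\ref{vvdot.diff}) of $\d{\mbf v}_t/v_t$, the scale expansion $X_t/v_t^{1/2}(\bth_{n\vp})=\eta_t(1+n^{-1/2}\xi_{nt})$, the identification condition ${\mathrm E}\{\eta_t\vp[G(\eta_t)]\}=1$, the ergodic theorem, and the martingale CLT for (\ref{Nn.Norm}). The only difference is bookkeeping. The paper splits $\T_{n1}(\bth_{n\vp})-\N_n(\bth_{0\vp})={\mbf F}_{n1}-{\mbf F}_{n2}$, where ${\mbf F}_{n1}$ carries the \emph{full} bracket $\{1-a_{nt}(\bb)\eta_t\vp[G(\eta_t)]\}$ against the coefficient change $\d{\mbf v}_t(\bth_{n\vp})/v_t(\bth_{n\vp})-\d{\mbf v}_t(\bth_{0\vp})/v_t(\bth_{0\vp})$, and ${\mbf F}_{n2}$ carries the scale perturbation $v_{nt}(\bb)\eta_t\vp[G(\eta_t)]$ against the unperturbed coefficient. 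Because that full bracket has mean zero by the identification condition and is independent of $\A_t(\bth_{0\vp})$, the paper gets ${\mbf F}_{n1}=u_{\rm P}(1)$ outright via the LLN, and only ${\mbf F}_{n2}$ contributes, yielding $\J(\bth_{0\vp})\bb/2$ without any $\ddot{\mbf v}$-expectation ever appearing. Your split $A_n+B_n$ instead separates the constant~$1$ from the score piece, so neither term vanishes individually; you must track the ${\mathrm E}(\ddot{\mbf v}_1(\bth_{0\vp})/v_1(\bth_{0\vp}))$-contributions in $A_n$ and $B_n$ and then observe that they cancel. Both routes are valid; the paper's is marginally tidier in that it never introduces quantities that later cancel, but your argument is equally rigorous and the remainder control you flag as the main obstacle is handled in exactly the same way in both.
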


\begin{proof}
The difference between $\T_{n1}(\bth_{n\vp})$ and $\N_n(\bth_{0\vp})$ lies in comparing $X_t/v_t^{1/2}(\bth_{n\vp})=\eta_t+ n^{-1/2}z_{nt}$ and $\eta_t$ and involves  smooth function of $\bb$. So the proof follows easily with the details below. Notice that
\begin{eqnarray*}
\T_{n1}(\bth_{n\vp})-\N_n(\bth_{0\vp}) &=& n^{-1/2}\s \, \left[\frac{\d{\mbf v}_t(\bth_{n\vp})}{v_t(\bth_{n\vp})}-\frac{\d{\mbf v}_t(\bth_{0\vp})}{v_t(\bth_{0\vp})}\right] \left\lbrace 1- a_{nt}(\bb)\eta_t \vp[G(\eta_t)] \right\rbrace \\
&- & n^{-1/2}\s \, \frac{\d{\mbf v}_t(\bth_{0\vp})}{v_t(\bth_{0\vp})} v_{nt}(\bb) \eta_t \vp[G(\eta_t)] =  {\mbf F}_{n1}- {\mbf F}_{n2}.
\end{eqnarray*}
Using (\ref{vvdot.diff}),
\begin{eqnarray*}
{\mbf F}_{n1}&=& n^{-1}\s \, \A_t(\bth_{0\vp})\bb \left\lbrace 1- a_{nt}(\bb)\eta_t \vp[G(\eta_t)] \right\rbrace +u_{\rm P}(1) \\
&=& n^{-1}\s \, \A_t(\bth_{0\vp})\bb \left\lbrace 1- \eta_t \vp[G(\eta_t)] \right\rbrace
- n^{-1}\s \, \A_t(\bth_{0\vp})\bb   v_{nt}(\bb)\eta_t \vp[G(\eta_t)]  +u_{\rm P}(1). 
\end{eqnarray*}
Using the LLN, the first term in the above decomposition of ${\mbf F}_{n1}$ is $u_{\rm P}(1)$ since 
$${\mathrm E}\left\lbrace \A_t(\bth_{0\vp})\bb \left\lbrace 1- \eta_t \vp[G(\eta_t)] \right\rbrace \right\rbrace ={\mathrm E}[\A_t(\bth_{0\vp})\bb] \, {\mathrm E}\left\lbrace 1- \eta_t \vp[G(\eta_t)] \right\rbrace =\0.$$
For the second term, using (\ref{vntbb2}) we have $n^{-1/2}$ factor of $v_{nt}(\bb)$ and consequently it is $u_{\rm P}(1)$.

For ${\mbf F}_{n2}$, we approximate $v_{nt}(\bb)$ by $-n^{-1/2}\d{\mbf v}_t^\prime(\bth_{0\vp}) \bb/\{2 v_t (\bth_{0\vp}) \}$ and use ${\rm E}\{\eta_t \vp[G(\eta_t)]\}=1$ to obtain 
${\mbf F}_{n2}=-\J(\bth_{0\vp}) \bb/2+u_{\rm P}(1)$. Hence (\ref{T1n}) is proved. 

Using the independence of $v_t$ and $\eta_t$ for each $t$, $\N_n(\bth_{0\vp})$ is a sum of the vectors of martingale differences and so
(\ref{Nn.Norm}) follows from the martingale CLT.
\end{proof}

Now consider the rank-based counterpart of $\T_{n2}(\bth_{n\vp})$
$${\mbf R}_n(\bth_{n\vp})= n^{-1/2}  \s \, \frac{\d{\mbf v}_t(\bth_{n\vp})}{v_t(\bth_{n\vp})}  \left\lbrace 1- \frac{X_t}{v_t^{1/2}(\bth_{n\vp})}\vp \left(\frac{R_{nt}(\bth_{n\vp})}{n+1} \right) \right\rbrace.$$
The following lemma provides the difference between $\T_{n2}(\bth_{n\vp})$ and ${\mbf R}_n(\bth_{n\vp})$. It shows that the effect of replacing observations in $\T_{2n}(\bth_{n\vp})$ by ranks is asymptotically a r.v. with mean zero. 
\begin{lemma}\label{lemmaRT.diff}
Let Assumptions (A1)-(A3) hold. Then, as $n \rightarrow \infty$,
\begin{equation}\label{rank2}
{\mbf R}_n(\bth_{n\vp})-\T_{n2}(\bth_{n\vp}) = \Q_n(\bth_{0\vp}) + u_{\rm P}(1).
\end{equation}
Also, $\Q_n(\bth_{0\vp})$ converges in distribution to ${\mathrm E}(\d{\mbf v}_1(\bth_{0\vp})/v_1(\bth_{0\vp}) ) Z$, where $Z$ has mean zero and variance $\gamma(\vp)$.
\end{lemma}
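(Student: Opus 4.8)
The plan is to exploit that ${\mbf R}_n(\bth_{n\vp})$ and $\T_{n2}(\bth_{n\vp})$ differ only in the argument of the score. Writing $e_t:=X_t/v_t^{1/2}(\bth_{n\vp})$,
$$
{\mbf R}_n(\bth_{n\vp})-\T_{n2}(\bth_{n\vp}) = n^{-1/2}\s \frac{\d{\mbf v}_t(\bth_{n\vp})}{v_t(\bth_{n\vp})}\,e_t\left\lbrace \vp[G(e_t)]-\vp\!\left(\frac{R_{nt}(\bth_{n\vp})}{n+1}\right)\right\rbrace .
$$
Since $\vp$ is non-decreasing and right-continuous by Assumption (A3), I would represent each score through its generating measure, $\vp(s)=\vp(0^+)+\int_0^1 I(u\le s)\,d\vp(u)$. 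As $R_{nt}(\bth_{n\vp})/(n+1)$ equals, up to the factor $n/(n+1)$, the empirical distribution function $\h G_n$ of the residuals $\{e_j\}$ evaluated at $e_t$, and as $I(u\le G(e_t))=I(G^{-1}(u)\le e_t)$ while $I(u\le\h G_n(e_t))=I(\h G_n^{-1}(u)\le e_t)$ up to a negligible boundary adjustment, interchanging the finite sum with the $d\vp$-integral turns the difference into $\int_0^1[W_n(G^{-1}(u))-W_n(\h G_n^{-1}(u))]\,d\vp(u)+u_{\rm P}(1)$, where $W_n(x):=n^{-1/2}\s(\d{\mbf v}_t(\bth_{n\vp})/v_t(\bth_{n\vp}))\,e_t\,I(e_t\ge x)$ is exactly a weighted mixed-empirical process of the type studied in Theorem~\ref{Thm.emp} and Theorem~\ref{UUbb}.

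Next I would evaluate this process through the scale-perturbation machinery. With the weights ${\mbf L}_{nt}(\bb)$ of (\ref{LN}) and perturbations $u_{nt}(\bb)$ of (\ref{vntbb}), the identity $e_t=\eta_t\,v_t^{1/2}(\bth_{0\vp})/v_t^{1/2}(\bth_{n\vp})$ shows that $W_n$ is, up to an additive constant $n^{-1/2}\s{\mbf L}_{nt}(\bb)\eta_t$ that cancels in the difference $W_n(G^{-1}(u))-W_n(\h G_n^{-1}(u))$, the process $\ti{\cal V}(\cdot,\bb)$ of the probabilistic framework. Theorem~\ref{UUbb} then lets me replace the centred part of $\ti{\cal V}$ by its non-perturbed, error-based counterpart ${\cal U}^*$, whose increments over the $O_{\rm P}(n^{-1/2})$ gap between $\h G_n^{-1}(u)$ and $G^{-1}(u)$ are $u_{\rm P}(1)$ by asymptotic equicontinuity; simultaneously Corollary~\ref{Corollary} shows that the perturbation affects only the $\mu$-compensator, and its scale drift $x^2g(x)\,n^{-1/2}\s{\mbf L}_{nt}(\bb)u_{nt}(\bb)$ differs between $\h G_n^{-1}(u)$ and $G^{-1}(u)$ by only $u_{\rm P}(1)$, since $x^2g(x)$ is continuous and $n^{-1/2}\s{\mbf L}_{nt}(\bb)u_{nt}(\bb)=O_{\rm P}(1)$ by (\ref{g5}). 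What survives is the compensator increment $[\mu(\h G_n^{-1}(u))-\mu(G^{-1}(u))]\,n^{-1/2}\s{\mbf L}_{nt}(\bb)$. Replacing $\d{\mbf v}_t(\bth_{n\vp})/v_t(\bth_{n\vp})$ by $\d{\mbf v}_t(\bth_{0\vp})/v_t(\bth_{0\vp})$ via (\ref{vvdot.diff})---legitimate because the per-term error is $O_{\rm P}(n^{-1/2})$ and multiplies an $O_{\rm P}(n^{-1/2})$ quantile increment---and controlling the passage from the residual quantile to the error quantile $\ti G_n^{-1}$ identifies the leading term with $\Q_n(\bth_{0\vp})$, which is (\ref{rank2}).

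For the limit law of $\Q_n(\bth_{0\vp})$, I would linearise the integrand. Since $\dot\mu(x)=xg(x)$ and $g(G^{-1}(u))>0$, the Bahadur--Ghosh representation of the empirical quantile gives, uniformly on compacta, $\ti G_n^{-1}(u)-G^{-1}(u)=[u-\ti G_n(G^{-1}(u))]/g(G^{-1}(u))+u_{\rm P}(n^{-1/2})$, whence $\mu(G^{-1}(u))-\mu(\ti G_n^{-1}(u))=G^{-1}(u)[\ti G_n(G^{-1}(u))-u]+u_{\rm P}(n^{-1/2})$. Because this scalar factor does not depend on $t$, I may write $\Q_n(\bth_{0\vp})=\big(n^{-1}\s\d{\mbf v}_t(\bth_{0\vp})/v_t(\bth_{0\vp})\big)\int_0^1 G^{-1}(u)\,\sqrt{n}\,[\ti G_n(G^{-1}(u))-u]\,d\vp(u)+u_{\rm P}(1)$; the ergodic theorem handles the average, while Donsker's theorem gives $\sqrt{n}\,[\ti G_n(G^{-1}(\cdot))-(\cdot)]\Rightarrow B(\cdot)$, so the integral converges to $Z=\int_0^1 G^{-1}(u)B(u)\,d\vp(u)$ by the continuous mapping theorem and $\Q_n(\bth_{0\vp})\Rightarrow{\mathrm E}(\d{\mbf v}_1(\bth_{0\vp})/v_1(\bth_{0\vp}))Z$. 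Finally $Z$ is centred Gaussian with ${\mathrm E}Z=0$ and, from $\mathrm{Cov}(B(u),B(v))=\min\{u,v\}-uv$, variance $\int_0^1\int_0^1 G^{-1}(u)G^{-1}(v)[\min\{u,v\}-uv]\,d\vp(u)\,d\vp(v)=\gamma(\vp)$.

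The main obstacle will be the simultaneous control of the non-smooth, integer-valued rank function and the $n^{-1/2}$-order scale perturbation, carried out \emph{uniformly} in $u\in(0,1)$. Discarding the fluctuation increments requires an oscillation bound for the weighted mixed-empirical process over shrinking windows, and the passage from the residual quantile $\h G_n^{-1}$ to the error quantile $\ti G_n^{-1}$ must be shown to leave only $\Q_n(\bth_{0\vp})$ after integration against $d\vp$; both steps are delicate near the endpoints $u\to 0,1$, where $G^{-1}(u)$ diverges and an unbounded score such as the vdW must first be approximated by the truncated scores $\vp_m$ of Section~\ref{sec.exam}. The concluding Brownian-bridge computation, by contrast, is routine.
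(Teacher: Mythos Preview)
Your proposal follows the same overall strategy as the paper---exploiting the weighted mixed-empirical process machinery of Theorem~\ref{Thm.emp}/Theorem~\ref{UUbb}, and deducing the Brownian-bridge limit of $\Q_n(\bth_{0\vp})$ via the Bahadur--Ghosh representation and Donsker's theorem. The variance computation for $Z$ is identical.

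The organization, however, differs in a way that matters. The paper first strips the weights down to $\w_t=\d{\mbf v}_t(\bth_{0\vp})/v_t(\bth_{0\vp})$ and the multiplier to $\eta_t$ (the $\D_{n1},\ldots,\D_{n4}$ step), and then performs a \emph{three-way telescoping}
\[
\D_{n3}=\D_{n1}^*-\D_{n2}^*+\D_{n3}^*,
\]
where $\D_{n1}^*$ isolates the scale-perturbation effect on $G$, $\D_{n2}^*$ isolates it on the empirical c.d.f.'s, and $\D_{n3}^*$ is the pure rank effect. The key observation is that \emph{both} $\D_{n1}^*$ and $\D_{n2}^*$ equal $\M(\bth_{0\vp})\bb+u_{\rm P}(1)$, so they cancel, and only $\D_{n3}^*=\Q_n(\bth_{0\vp})+u_{\rm P}(1)$ survives.

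Your route bundles these two perturbation effects together. After the ${\cal U}^*$-increment and drift-increment are (correctly) discarded, what remains is
\[
\bigl[\mu(\h G_n^{-1}(u))-\mu(G^{-1}(u))\bigr]\,n^{-1/2}\s {\mbf L}_{nt}(\bb),
\]
and you then ``pass from the residual quantile $\h G_n^{-1}$ to the error quantile $\ti G_n^{-1}$''. This is the crux, and it is \emph{not} a negligible substitution: the scale perturbation induces a systematic bias $\h G_n^{-1}(u)-\ti G_n^{-1}(u)\approx -n^{-1/2}G^{-1}(u)\,{\mathrm E}[\d{\mbf v}_1^{\prime}(\bth_{0\vp})/(2v_1(\bth_{0\vp}))]\bb$, and multiplying by $n^{-1/2}\s{\mbf L}_{nt}(\bb)=O_{\rm P}(n^{1/2})$ yields an $O_{\rm P}(1)$ contribution proportional to $\rho(\vp)\,{\mathrm E}[\d{\mbf v}_1/v_1]\,{\mathrm E}[\d{\mbf v}_1^{\prime}/v_1]\,\bb$. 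This involves the outer product of expectations, not $\J(\bth_{0\vp})={\mathrm E}[\d{\mbf v}_1\d{\mbf v}_1^{\prime}/v_1^2]$, so it does \emph{not} cancel against your drift term (whose difference you have already, and correctly, bounded by $u_{\rm P}(1)$). You flag this step as ``delicate'', which it is; but the proposal gives no mechanism by which this $\bb$-dependent piece disappears. The paper's three-way split is precisely what makes this cancellation transparent: the $\M(\bth_{0\vp})\bb$ contribution from the perturbed empirical quantile ($\D_{n2}^*$) exactly matches that from the perturbed population c.d.f.\ ($\D_{n1}^*$), because in each case the compensator picks up $n^{-1/2}\s\w_t u_{nt}\to\J(\bth_{0\vp})\bb/2$ with the \emph{same} pairing of $\w_t$ and $u_{nt}$ inside the sum.
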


\begin{proof}
Consider the following decomposition
\begin{eqnarray*}
\lefteqn{{\mbf R}_n(\bth_{n\vp})-\T_{n2}(\bth_{n\vp})}\\
&=& n^{-1/2}\s \, \frac{\d{\mbf v}_t(\bth_{n\vp})}{v_t(\bth_{n\vp})}\frac{X_t}{v_t^{1/2}(\bth_{n\vp})} \left\lbrace \vp \left[ G\left(\frac{X_t}{v_t^{1/2}(\bth_{n\vp})}\right) \right]
-\vp \left[ \frac{R_{nt}(\bth_{n\vp})}{n+1} \right] \right\rbrace \\
& = & n^{-1/2}\s \, \left[\frac{\d{\mbf v}_t(\bth_{n\vp})}{v_t(\bth_{n\vp})}-\frac{\d{\mbf v}_t(\bth_{0\vp})}{v_t(\bth_{0\vp})}\right]  \eta_t \left\lbrace \vp \left[ G\left(\frac{X_t}{v_t^{1/2}(\bth_{n\vp})}\right) \right]
- \vp \left[ \frac{R_{nt}(\bth_{n\vp})}{n+1} \right]  \right\rbrace \\
&+& n^{-1/2}\s \, \left[\frac{\d{\mbf v}_t(\bth_{n\vp})}{v_t(\bth_{n\vp})}-\frac{\d{\mbf v}_t(\bth_{0\vp})}{v_t(\bth_{0\vp})}\right]   v_{nt}(\bb)  \eta_t \left\lbrace \vp \left[ G\left(\frac{X_t}{v_t^{1/2}(\bth_{n\vp})}\right) \right]
-\vp \left[ \frac{R_{nt}(\bth_{n\vp})}{n+1} \right]  \right\rbrace \\
&+& n^{-1/2}\s \, \frac{\d{\mbf v}_t(\bth_{0\vp})}{v_t(\bth_{0\vp})} \eta_t \left\lbrace \vp \left[ G\left(\frac{X_t}{v_t^{1/2}(\bth_{n\vp})}\right) \right]
-\vp \left[ \frac{R_{nt}(\bth_{n\vp})}{n+1} \right] \right\rbrace \\
&+& n^{-1/2}\s \, \frac{\d{\mbf v}_t(\bth_{0\vp})}{v_t(\bth_{0\vp})}  v_{nt}(\bb) \eta_t \left\lbrace \vp \left[ G\left(\frac{X_t}{v_t^{1/2}(\bth_{n\vp})}\right) \right] 
-\vp \left[ \frac{R_{nt}(\bth_{n\vp})}{n+1} \right] \right\rbrace \\
&=& \D_{n1} + \D_{n2} + \D_{n3} + \D_{n4}.
\end{eqnarray*}
Using the $n^{-1/2}$-factor in (\ref{vvdot.diff}) and (\ref{vntbb2}), $\D_{n1}$, $\D_{n2}$ and $\D_{n4}$ are $u_{\rm P}(1)$. We next prove that $\D_{n3}=\Q_n(\bth_{0\vp}) +u_{\rm P}(1)$ in detail. Recall that $\tilde{G}_n(x)$ is the empirical distribution function of $\{\eta_t\}$. Let $G_n(x), x\in \R$ be the empirical distribution function of $\{X_t/v_t^{1/2}(\bth_{n\vp})\}$. Then 
\begin{eqnarray*}
\D_{n3}&= & n^{-1/2}\s \, \frac{\d{\mbf v}_t(\bth_{0\vp})}{v_t(\bth_{0\vp})} \eta_t \left\lbrace \vp \left[ G\left(\frac{X_t}{v_t^{1/2}(\bth_{n\vp})}\right) \right]
- \vp \left[ G_n \left(\frac{X_t}{v_t^{1/2}(\bth_{n\vp})}\right) \right] \right\rbrace \\
&=& n^{-1/2}\s \, \frac{\d{\mbf v}_t(\bth_{0\vp})}{v_t(\bth_{0\vp})} \eta_t \left\lbrace \vp \left[ G\left(\frac{X_t}{v_t^{1/2}(\bth_{n\vp})}\right) \right]
- \vp \left[ G(\eta_t) \right] \right\rbrace \\
& - &   n^{-1/2}\s \, \frac{\d{\mbf v}_t(\bth_{0\vp})}{v_t(\bth_{0\vp})} \eta_t \left\lbrace \vp \left[ G_n\left(\frac{X_t}{v_t^{1/2}(\bth_{n\vp})}\right) \right]
- \vp \left[ \tilde{G}_n(\eta_t) \right] \right\rbrace \\
& + &   n^{-1/2}\s \, \frac{\d{\mbf v}_t(\bth_{0\vp})}{v_t(\bth_{0\vp})} \eta_t \left\lbrace \vp \left[ G(\eta_t) \right]
- \vp \left[ \tilde{G}_n(\eta_t) \right] \right\rbrace \\
&=& \D_{n1}^* - \D_{n2}^* + \D_{n3}^*.
\end{eqnarray*}
Since $\D_{n1}^*$ is the weighted sum of the difference of a c.d.f. evaluated at two different r.v.'s and integrated wrt $\vp$, using the same technique for proving (\ref{JJdiff}), $\D_{n1}^*=\M(\bth_{0\vp}) \bb +u_{\rm P}(1)$.

Write $\w_t = \d{\mbf v}_t(\bth_{0\vp})/v_t(\bth_{0\vp})$. Since $\D_{n2}^*$ is the weighted sum of the difference of two different c.d.f.'s evaluated at two different r.v.'s and integrated wrt $\vp$,
\begin{eqnarray*}
\D_{n2}^* &=& \int_{0}^{1} n^{-1/2} \s \w_t \eta_t I \left[ G_n\left(\frac{X_t}{v_t^{1/2}(\bth_{n\vp})} \right) < u \right] - I \left[ \tilde{G}_n(\eta_t) < u \right] d \varphi(u)  \\
&= & \int_{0}^{1} n^{-1/2} \s \w_t \eta_t  I \left[ \frac{X_t}{v_t^{1/2}(\bth_{n\vp})}  < G_n^{-1}(u) \right] - I \left[ \eta_t < \tilde{G}_n^{-1}(u)  \right]  d \varphi(u)  \\
&= & \int_{0}^{1} n^{-1/2} \s \w_t \eta_t I \left[ \eta_t  < G_n^{-1}(u) \frac{1}{1 + v_{nt}(\bb)}  \right] - I \left[ \eta_t < \tilde{G}_n^{-1}(u)  \right] d \varphi(u). 
\end{eqnarray*}
Using (\ref{anvn}), $\sup\left\lbrace \left| G_n^{-1}(u) - \tilde{G}_n^{-1}(u)\right|; u \in (0, 1)\right\rbrace = u_{\rm P}(1)$. Hence, by Theorem \ref{UUbb},
\begin{eqnarray*}
\D_{n2}^*&=& \int_{0}^{1} n^{-1/2} \s \w_t  \left[ \mu \left( \tilde{G}_n^{-1}\left(u \right)\frac{1}{1+v_{nt}(\bb)} \right) - \mu\left( \tilde{G}_n^{-1}\left(u \right) \right) \right] d \varphi(u) + u_{\rm P}(1) \\
&=& \int_{0}^{1} n^{-1/2} \s \w_t \d{\mu} \left( \tilde{G}_n^{-1}\left(u \right) \right) \frac{-v_{nt}(\bb)}{1+v_{nt}(\bb)} \tilde{G}_n^{-1}\left(u \right) d \varphi(u) + u_{\rm P}(1) \\
&=& \int_{0}^{1} n^{-1/2} \s \w_t \left(\tilde{G}_n^{-1}\left(u \right) \right)^2 g\left(\tilde{G}_n^{-1}\left(u \right) \right)  \frac{-v_{nt}(\bb)}{1+v_{nt}(\bb)} d \varphi(u) + u_{\rm P}(1)  \\
&=&   \int_{0}^{1} n^{-1} \s \frac{\d{\mbf v}_t(\bth_{0\vp}) \d{\mbf v}_t^\prime (\bth_{0\vp})}{2 v_t^2(\bth_{0\vp})} \left(G^{-1}(u)\right)^2 g\left(G^{-1}(u) \right)  d \varphi(u) \bb + 
u_{\rm P}(1) \\
&=& \M(\bth_{0\vp}) \bb + u_{\rm P}(1).
\end{eqnarray*}
Finally consider $\D_{n3}^*$ written as
\begin{eqnarray*}
\D_{n3}^*&=& \int_{0}^{1} n^{-1/2} \s \w_t \eta_t \left\lbrace I \left[ \eta_t \leq G^{-1}(u) \right] - I \left[ \eta_t \leq \tilde{G}_n^{-1}(u) \right] \right\rbrace d \varphi(u)  \\
&=&  \int_{0}^{1} n^{-1/2} \s \w_t \eta_t \left\lbrace I \left[ \eta_t \leq G^{-1}(u) \right] - I \left[ \eta_t \leq G^{-1}(G(\tilde{G}_n^{-1}(u))) \right] \right\rbrace  d \varphi(u)  \\
&=& \int_{0}^{1} \left[ \M_n(u) - \M_n(G(\tilde{G}_n^{-1}(u))) \right]  d \varphi(u) \\
&+& \int_{0}^{1}  n^{-1/2} \s \w_t \left[ \mu(G^{-1}(u)) -  \mu(\tilde{G}_n^{-1}(u))\right]   d \varphi(u),
\end{eqnarray*}
where $\M_n(u) := n^{-1/2} \s \w_t \left\lbrace \eta_t  I \left[ \eta_t \leq G^{-1}(u) \right]  - \mu(G^{-1}(u)) \right\rbrace$.
We show that
\begin{equation}\label{MM1}
\left| \int_{0}^{1} \left[ \M_n(u) - \M_n(G(\tilde{G}_n^{-1}(u))) \right]  d \varphi(u) \right| = o_{\rm P}(1),
\end{equation}
\begin{equation}\label{muGGn1}
\Q_n(\bth_{0\vp}) = \int_{0}^{1}  n^{-1/2} \s \w_t \left[ \mu(G^{-1}(u)) -  \mu(\tilde{G}_n^{-1}(u))\right]   d \varphi(u) \rightarrow {\mathrm E}(\d{\mbf v}_1/v_1) Z.
\end{equation}
For (\ref{MM1}), note that $\{\M_n(.)\}$ converges weakly to a Brownian Bridge on $(0, 1)$ since for each fixed $u$, 
$\M_n(u)$ converges to a normal distribution using the martingale CLT and it is tight using the bound on the moment of the difference process in Billingsley~(1968, Theorem~12.3).

Since $\sup\{|u - G(\tilde{G}_n^{-1}(u))|; u \in (0, 1)\} = \sup\{|G(x) - \tilde{G}_n(x)|; x \in \R \} = o_{\rm P}(1)$, by the Arzela-Ascoli theorem,
\begin{equation*}
\sup \left\lbrace \left|  \M_n(u) - \M_n(G(\tilde{G}_n^{-1}(u)))  \right|;u \in (0, 1)\right\rbrace =o_{\rm P}(1),
\end{equation*}
and consequently, (\ref{MM1}) is proved. For (\ref{muGGn1}), we use the Bahadur representation; see Bahadur~(1966)  and Ghosh~(1971) for details. Since $\dot{g}(x)$ is bounded and $g$ is positive
on $\R$, 
\begin{equation*}
n^{1/2} \left(G^{-1}(u)) -  \tilde{G}_n^{-1}(u)\right) - n^{-1/2} \sum_{i=1}^n \frac{I\{\eta_i \leq G^{-1}(u) \} - u}{g\left(G^{-1}(u) \right)}  = o(1) \quad \text{a.s.}.
\end{equation*}
Applying the mean value theorem,
\begin{align}
n^{1/2} \left[\mu(G^{-1}(u)) -  \mu(\tilde{G}_n^{-1}(u))\right] - \d{\mu}(G^{-1}(u))  n^{-1/2} \sum_{i=1}^n \frac{I\{\eta_i \leq G^{-1}(u) \} - u}{g\left(G^{-1}(u) \right)} = o(1) \quad \text{a.s.}.\label{Qn1}
\end{align}
Using $\d{\mu}(x)=xg(x)$, 
\begin{align*}
\Q_n(\bth_{0\vp}) &= n^{-1} \s \w_t \int_{0}^{1} \left[\d{\mu}(G^{-1}(u))  n^{-1/2} \sum_{i=1}^n \frac{I\{\eta_i \leq G^{-1}(u) \} - u}{g\left(G^{-1}(u) \right)} \right]  d \varphi(u)+o_{\rm P}(1)\\
&=n^{-1} \s \w_t \int_{0}^{1} \left[ G^{-1}(u) n^{1/2} \left( \tilde{G}_n(G^{-1}(u)) - u\right) \right]  d \varphi(u) + o_{\rm P}(1).
\end{align*}
Since $n^{-1} \s \w_t \rightarrow {\mathrm E}(\d{\mbf v}_1(\bth_{0\vp})/v_1(\bth_{0\vp}))$, and using van der Vaart~(1998, Theorem~19.3),
$$n^{1/2} \left( \tilde{G}_n(G^{-1}(u)) - u\right) \rightarrow B(u),$$
we obtain (\ref{muGGn1}) with the r.v. $Z$ having mean zero. The variance of $Z$ is given by
\begin{eqnarray*}
{\mathrm E}(Z^2)&=& {\mathrm E}\left[ \int_0^1 \int_0^1 G^{-1}(u) G^{-1}(v) B(u) B(v) d \vp(u) d \vp(v)  \right] \\
&=& \int_0^1 \int_0^1 G^{-1}(u) G^{-1}(v) {\mathrm E}[B(u) B(v)] d \vp(u) d \vp(v)  \\
&=& \int_0^1 \int_0^1 G^{-1}(u) G^{-1}(v) \left[  \min\{u, v\} - uv \right] d \vp(u) d \vp(v)  \\
&= & \gamma(\vp).
\end{eqnarray*}
\end{proof}

Now recall the rank-based central sequence
$$\h{{\mbf R}}_n(\bth_{n\vp})=n^{-1/2}\s \, \frac{\d{\h{\mbf v}}_t(\bth_{n\vp})}{\h{v}_t(\bth_{n\vp})}  \left\lbrace 1- \frac{X_t}{\h{v}_t^{1/2}(\bth_{n\vp})}\vp \left(\frac{\h{R}_{nt}(\bth_{n\vp})}{n+1} \right) \right\rbrace,$$
which is an approximation to ${\mbf R}_n(\bth_{n\vp})$. We have the following lemma dealing with the difference between 
${\mbf R}_n(\bth_{n\vp})$ and $\h{{\mbf R}}_n(\bth_{n\vp})$.
\begin{lemma}\label{lem.RRhat}
Let Assumptions (A1)-(A3) hold. Then, as $n \rightarrow \infty$,
\begin{equation}\label{rank3}
{\mbf R}_n(\bth_{n\vp}) - \h{{\mbf R}}_n(\bth_{n\vp}) = u_{\rm P}(1).
\end{equation}
\end{lemma}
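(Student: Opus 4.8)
The plan is to expand ${\mbf R}_n(\bth_{n\vp}) - \h{\mbf R}_n(\bth_{n\vp})$ into three groups of terms, isolating the three places in which the unobservable $v_t$ is replaced by the observable $\h v_t$: the weight $\d{\mbf v}_t(\bth_{n\vp})/v_t(\bth_{n\vp})$, the studentized observation $X_t/v_t^{1/2}(\bth_{n\vp})$, and the rank $R_{nt}(\bth_{n\vp})$. Writing $s_t=X_t/v_t^{1/2}(\bth_{n\vp})$ and $\h s_t=X_t/\h v_t^{1/2}(\bth_{n\vp})$, and keeping the bounded score $\vp$ as a separate factor, a telescoping decomposition reduces the claim to showing that each of the three replacement terms is $u_{\rm P}(1)$ over $\|\bb\|\le c$.

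For the weight and the studentized-observation replacements I would use Fact~2. Since $\omega_0>0$ forces $v_t(\bth_{n\vp})$ and $\h v_t(\bth_{n\vp})$ to be bounded away from zero, the bounds (\ref{fact21})--(\ref{fact22}) give $|\d{\mbf v}_t/v_t-\d{\h{\mbf v}}_t/\h v_t|\le \rho^t Z$ and $|s_t-\h s_t|\le \rho^t Z'$, with $Z,Z'$ having finite moments. Because $\vp$ is bounded (A3) and $|s_t|$ has a finite $(2+\delta)$-th moment (A1(iii)), multiplying these geometric bounds by the remaining factors and summing yields, via Fact~3 (equation (\ref{fact3})), a sum that is $O_{\rm P}(1)$; the prefactor $n^{-1/2}$ then makes each contribution $u_{\rm P}(1)$, with the moment bounds of Fact~1 (equation (\ref{fact1})) supplying uniformity over $\|\bb\|\le c$.

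The genuinely delicate term, and the main obstacle, is the rank replacement $n^{-1/2}\sum_{t=1}^n \frac{\d{\mbf v}_t(\bth_{n\vp})}{v_t(\bth_{n\vp})}\, s_t\,[\vp(R_{nt}(\bth_{n\vp})/(n+1))-\vp(\h R_{nt}(\bth_{n\vp})/(n+1))]$. A crude uniform bound on the $\vp$-differences is useless here, because $\sum_t |\d{\mbf v}_t/v_t|\,|s_t|=O_{\rm P}(n)$, so the differences must be shown negligible on average rather than merely pointwise. I would relate the ranks to the empirical distribution functions $G_n$ and $\h G_n$ of $\{s_j\}$ and $\{\h s_j\}$, first establishing $\sup_x|\h G_n(x)-G_n(x)|=o_{\rm P}(n^{-1/2})$: a disagreement $I(s_j\le x)\ne I(\h s_j\le x)$ can occur only when $x$ lies in an interval of length at most $c\,\rho^j Z_0$ (Fact~2 together with the bounded density in (A1)), so a maximal-inequality argument bounds the total number of disagreements by $O_{\rm P}(\log n)$. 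Decomposing $\vp$ (of bounded variation, with finitely many jumps by (A3)) into a continuous part and its finitely many jumps, the continuous part is then controlled through the geometric decay of $|s_t-\h s_t|$ together with the modulus of continuity of $G_n$.

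The jump part of $\vp$ is where the real difficulty lies: a tiny change of rank can push $R_{nt}/(n+1)$ across a discontinuity of $\vp$ and change the summand by $O(1)$. I expect to handle this by splitting the index set at $t\le L_n$ and $t>L_n$ with $L_n\asymp\log n$ chosen so that $\rho^{L_n}=o(n^{-1/2})$: for $t>L_n$ the perturbation is so small that at most $o_{\rm P}(n^{1/2})$ ranks can straddle any fixed discontinuity threshold, while for $t\le L_n$ there are only $O(\log n)$ indices, each contributing $O_{\rm P}(n^{-1/2})$, so both pieces are $u_{\rm P}(1)$. Uniformity in $\bb$ over $\|\bb\|\le c$ throughout is inherited from the two-parameter empirical-process framework of Theorem~\ref{UUbb}, whose hypotheses (\ref{cnt})--(\ref{ll}) were already verified for the present weights and perturbations.
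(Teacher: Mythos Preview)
Your proposal follows essentially the same route as the paper: the same three-term telescoping decomposition isolating the weight, the studentized observation, and the rank; Fact~2 geometric bounds for the first two pieces; and an index-splitting argument for the rank-replacement term. The only notable differences are technical details: the paper splits the index set at $\lfloor n^k\rfloor$ for a fixed $0<k<1/2$ rather than at $L_n\asymp\log n$, and it bounds the rank difference directly by showing $\sup_{t\ge\lfloor n^k\rfloor}|R_{nt}-\h R_{nt}|/(n+1)=O_{\rm P}(n^{k-1})$ via ${\mathrm E}\sum_{i\ge\lfloor n^k\rfloor} I(\mathcal A_{i,x,\bth})\le C\sum_i(\rho^i+\rho^t){\mathrm E}|\eta|\to0$ (using the bounded density from (A1)), rather than routing through $\sup_x|\h G_n(x)-G_n(x)|$ and a maximal inequality.
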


\begin{proof}
Note that ${\mbf R}_n(\bth_{n\vp}) - \h{{\mbf R}}_n(\bth_{n\vp})$ equals
\begin{align}
&\quad n^{-1/2}\s \left[ \frac{\d{\mbf v}_t(\bth_{n\vp})}{v_t(\bth_{n\vp})} -  \frac{\d{\h{\mbf v}}_t(\bth_{n\vp})}{\h{v}_t(\bth_{n\vp})} \right] \label{RR1}\\
& + n^{-1/2}\s  \left[\frac{\d{\h{\mbf v}}_t(\bth_{n\vp})}{\h{v}_t(\bth_{n\vp})} \frac{X_t}{\h{v}_t^{1/2}(\bth_{n\vp})} -  \frac{\d{\mbf v}_t(\bth_{n\vp})}{v_t(\bth_{n\vp})} \frac{X_t}{v_t^{1/2}(\bth_{n\vp})}\right] \vp \left(\frac{\h{R}_{nt}(\bth_{n\vp})}{n+1} \right) \label{RR2}\\
&- n^{-1/2}\s  \frac{\d{\mbf v}_t(\bth_{n\vp})}{v_t(\bth_{n\vp})} \frac{X_t}{v_t^{1/2}(\bth_{n\vp})} \left[ \vp \left(\frac{R_{nt}(\bth_{n\vp})}{n+1} \right) - \vp \left(\frac{\h{R}_{nt}(\bth_{n\vp})}{n+1} \right) \right]. \label{RR3}
\end{align}
Due to (\ref{fact21}), (\ref{fact22}) and $\h{v}_t (\bth) \geq c_0(\bth) > 0$, we have
\begin{align}
\left|\frac{\d{\h{\mbf v}}_t(\bth_{n\vp})}{\h{v}_t(\bth_{n\vp})} -\frac{\d{\mbf v}_t(\bth_{n\vp})}{v_t(\bth_{n\vp})}\right|
&= \left|\frac{\d{\h{\mbf v}}_t(\bth_{n\vp}) - \d{\mbf v}_t(\bth_{n\vp})}{\h{v}_t(\bth_{n\vp})} + \d{\mbf v}_t(\bth_{n\vp}) \frac{v_t(\bth_{n\vp}) - \h{v}_t(\bth_{n\vp})}{\h{v}_t(\bth_{n\vp}) v_t(\bth_{n\vp})} \right| \nonumber \\
&\leq \frac{|\d{\h{\mbf v}}_t(\bth_{n\vp}) - \d{\mbf v}_t(\bth_{n\vp})|}{\h{v}_t(\bth_{n\vp})} + \frac{|v_t(\bth_{n\vp}) - \h{v}_t(\bth_{n\vp})|}{\h{v}_t(\bth_{n\vp})}\frac{|\d{\mbf v}_t(\bth_{n\vp})|}{v_t(\bth_{n\vp})} \nonumber  \\
&\leq C\rho^t\left[Z_1 + Z_0 \frac{|\d{\mbf v}_t(\bth_{n\vp})|}{v_t(\bth_{n\vp})}\right]. \label{eq:vtHat}
\end{align}
Hence, in view of (\ref{fact1}) and (\ref{fact3}), for every $0< b <\infty$,
$$\underset{||\bb|| < b}{\sup} \, \s \, \left| \frac{\d{\h{\mbf v}}_t(\bth_{n\vp})}{\h{v}_t(\bth_{n\vp})} -\frac{\d{\mbf v}_t(\bth_{n\vp})}{v_t(\bth_{n\vp})} \right| =  O_{\rm P}(1),$$
which implies that (\ref{RR1}) is $u_{\rm P}(1)$. Since $\vp$ is bounded, (\ref{RR2}) is $u_{\rm P}(1)$. For (\ref{RR3}), since there is a $n^{-1/2}$ factor from
$$\frac{\d{\mbf v}_t(\bth_{n\vp})}{v_t(\bth_{n\vp})} \frac{X_t}{v_t^{1/2}(\bth_{n\vp})} - \frac{\d{\mbf v}_t(\bth_{0\vp})}{v_t(\bth_{0\vp})} \eta_t \, ,$$
it suffices to prove that 
$$\K_n := n^{-1/2}\s  \frac{\d{\mbf v}_t(\bth_{0\vp})}{v_t(\bth_{0\vp})} \eta_t \left[ \vp \left(\frac{R_{nt}(\bth_{n\vp})}{n+1} \right) - \vp \left(\frac{\h{R}_{nt}(\bth_{n\vp})}{n+1} \right) \right]=u_{\rm P}(1).$$
Let $\floor*{x}$ denotes the greatest integer less than or equal to $x$. We split the sum in $\K_n$ into two parts: in the first part, $t$ runs till $\floor*{n^k} - 1$ where $0<k<1/2$. We show that this part is $u_p(1)$ by noting that its expectation is of the form 
$n^k n^{-1/2}=o(1)$ multiplied by expectation of $\d{\mbf v}_t(\bth_{0\vp})/v_t(\bth_{0\vp}) \eta_t$ and a bounded quantity because $\vp$ is bounded. The number of summands in the second term is $n-n^k$ which is large but there we bound expectation of the sum of by a quantity of the form $n \rho^{\floor*{n^k}}$ with $0<k<1/2$ and $0<\rho<1$ and this is $o(1)$.
Accordingly  
\begin{align}
&\K_n=\quad n^{-1/2} \sum_{t=1}^{\floor*{n^k} - 1} \frac{\d{\mbf v}_t(\bth_{0\vp})}{v_t(\bth_{0\vp})} \eta_t \left[ \vp \left(\frac{R_{nt}(\bth_{n\vp})}{n+1} \right) - \vp \left(\frac{\h{R}_{nt}(\bth_{n\vp})}{n+1} \right) \right] \nonumber \\
&+n^{-1/2} \sum_{t=\floor*{n^k}}^{n} \frac{\d{\mbf v}_t(\bth_{0\vp})}{v_t(\bth_{0\vp})} \eta_t\left[ \vp \left(\frac{R_{nt}(\bth_{n\vp})}{n+1} \right) - \vp \left(\frac{\h{R}_{nt}(\bth_{n\vp})}{n+1} \right) \right]. \label{Kfloor}
\end{align}
To show (\ref{Kfloor}) is $u_{\rm P}(1)$, we prove that for every $0<b <\infty$,
\begin{equation}
\underset{\substack{\floor*{n^k} \leq t \leq n \\ ||\bb|| < b}}{\sup} \left| \varphi\left( \frac{R_{nt}(\bth_{n\vp})}{n+1} \right) - \varphi\left( \frac{\h{R}_{nt}(\bth_{n\vp})}{n+1}\right) \right| = O_{\rm P}(n^{k-1}).\label{vpRRhat.unif}
\end{equation}
Since sequences $\{R_{nt}(\bth_{n\vp}) \}$ and $\{\hat{R}_{nt}(\bth_{n\vp}) \}$ are permutations of $\{1, ..., n\}$, with the probability tending to one as $n \rightarrow \infty$, both $\{R_{nt}(\bth_{n\vp}) \}$ and 
$\{\hat{R}_{nt}(\bth_{n\vp}) \}$ are at points of continuity of $\varphi$ that has a finite number of the points of discontinuity. Therefore, to prove (\ref{vpRRhat.unif}), it suffices to prove
\begin{equation}
\underset{\substack{\floor*{n^k} \leq t \leq n \\ ||\bb|| < b}}{\sup} \left|\frac{R_{nt}(\bth_{n\vp})}{n+1} - \frac{\h{R}_{nt}(\bth_{n\vp})}{n+1} \right| = O_{\rm P}(n^{k-1}).\label{RRhat.unif}
\end{equation}
For $\floor*{n^k} \leq t \leq n$, we decompose ranks as 
\begin{align}
&\quad \frac{R_{nt}(\bth_{n\vp})}{n+1} - \frac{\h{R}_{nt}(\bth_{n\vp})}{n+1}  \nonumber \\
&= \frac{1}{n+1} \sum_{i=1}^{\floor*{n^k} - 1}  \left\lbrace I\left[ \frac{X_i}{v_i^{1/2}(\bth_{n\vp})} < \frac{X_t}{v_t^{1/2}(\bth_{n\vp})}  \right] - I\left[ \frac{X_i}{\h{v}_i^{1/2}(\bth_{n\vp})} < \frac{X_t}{\h{v}_t^{1/2}(\bth_{n\vp})}  \right] \right\rbrace \nonumber \\
&\quad + \frac{1}{n+1} \sum_{i=\floor*{n^k}}^{n}  \left\lbrace I\left[ \frac{X_i}{v_i^{1/2}(\bth_{n\vp})} < \frac{X_t}{v_t^{1/2}(\bth_{n\vp})}  \right] - I\left[ \frac{X_i}{\h{v}_i^{1/2}(\bth_{n\vp})} < \frac{X_t}{\h{v}_t^{1/2}(\bth_{n\vp})}  \right] \right\rbrace, \label{qit}
\end{align}
where the first sum is $O_{\rm P}(n^{k-1})$. For the second sum, writing 
\begin{equation*}
I\left[ \frac{X_i}{\h{v}_i^{1/2}(\bth_{n\vp})} < \frac{X_t}{\h{v}_t^{1/2}(\bth_{n\vp})}  \right] = I \left[\frac{X_i}{v_i^{1/2}(\bth_{n\vp})} \frac{v_i^{1/2}(\bth_{n\vp}) \h{v}_t^{1/2}(\bth_{n\vp})}{\h{v}_i^{1/2}(\bth_{n\vp}) v_t^{1/2}(\bth_{n\vp})}  <   \frac{X_t}{v_t^{1/2}(\bth_{n\vp})}  \right],
\end{equation*}
the modulus of  (\ref{qit}) is bounded above by
$$
\underset{\substack{x \in \R \\ ||\bb|| < b}}{\sup} \, \frac{1}{n+1} \sum_{i=\floor*{n^k}}^{n} \left| I\left[ \frac{X_i}{v_i^{1/2}(\bth_{n\vp})} < x \right] - I\left[ \frac{X_i}{v_i^{1/2}(\bth_{n\vp})} \frac{v_i^{1/2}(\bth_{n\vp}) \h{v}_t^{1/2}(\bth_{n\vp})}{\h{v}_i^{1/2}(\bth_{n\vp}) v_t^{1/2}(\bth_{n\vp})} < x  \right] \right|. 
$$
Using $|I(A) - I(B)| \leq I({A \cap B^c}) +I({A^c \cap B})$, this is bounded above by  
\begin{equation*}
\underset{\substack{x \in \R \\ \bth \in \bTheta}}{\sup} \, \frac{1}{n+1} \sum_{i=\floor*{n^k}}^{n} I(\mathcal{A}_{i, x, \bth} ), 
\end{equation*}
where the set $\mathcal{A}_{i, x, \bth} $ is defined as 
\begin{align*}
\mathcal{A}_{i, x, \bth}  := & \left\lbrace \frac{X_i}{v_i^{1/2}(\bth)} < x,  \frac{X_i}{v_i^{1/2}(\bth)} \frac{v_i^{1/2}(\bth) \h{v}_t^{1/2}(\bth)}{\h{v}_i^{1/2}(\bth) v_t^{1/2}(\bth)} \ge x\right\rbrace \\
& \cup \left\lbrace \frac{X_i}{v_i^{1/2}(\bth)} \ge x,  \frac{X_i}{v_i^{1/2}(\bth)} \frac{v_i^{1/2}(\bth) \h{v}_t^{1/2}(\bth)}{\h{v}_i^{1/2}(\bth) v_t^{1/2}(\bth)} < x\right\rbrace.
\end{align*}
Therefore, it suffices to prove that 
$\sum_{i=\floor*{n^k}}^{n} I(\mathcal{A}_{i, x, \bth}  ) = o_{\rm P}(1)$ uniformly with respect to both $x$ and $\bth$. We show this with sets containing $\mathcal{A}_{i, x, \bth}$.

Recall that using (\ref{fact21}), $\h{v}_t(\bth) \geq c_0(\bth)> c$ for a positive constant $c$ and so
$$0 < v_t^{1/2}(\bth) - \h{v}_t^{1/2}(\bth) \leq \frac{\rho^t Z_0}{v_t^{1/2}(\bth) + \h{v}_t^{1/2}(\bth)} \leq \frac{\rho^t Z_0}{2c_0^{1/2}(\bth)}.$$
Now using the triangular inequality,
\begin{align}
&\quad \left| \frac{v_i^{1/2}(\bth) \h{v}_t^{1/2}(\bth)}{\h{v}_i^{1/2}(\bth) v_t^{1/2}(\bth)} - 1 \right| \nonumber \\
&\leq \left| \frac{\h{v}_t^{1/2}(\bth) \left(v_i^{1/2}(\bth) - \h{v}_i^{1/2}(\bth) \right)}{\h{v}_i^{1/2}(\bth) v_t^{1/2}(\bth)} \right| + \left|\frac{\h{v}_i^{1/2}(\bth) \left(\h{v}_t^{1/2}(\bth) - v_t^{1/2}(\bth) \right)}{\h{v}_i^{1/2}(\bth) v_t^{1/2}(\bth)} \right| \label{tri.v.vhat}.
\end{align}
Therefore (\ref{tri.v.vhat}) is bounded above by
$$\frac{\rho^i Z_0}{2c_0^{1/2}(\bth)} + \frac{\rho^t Z_0}{2c_0^{1/2}(\bth)}.$$
In view of (\ref{tri.v.vhat}), we get
$$\left| \frac{X_i}{v_i^{1/2}(\bth)} \frac{v_i^{1/2}(\bth) \h{v}_t^{1/2}(\bth)}{\h{v}_i^{1/2}(\bth) v_t^{1/2}(\bth)} - \frac{X_i}{v_i^{1/2}(\bth)} \right| \leq (\rho^i + \rho^t) Z_4 \left| \frac{X_i}{v_i^{1/2}(\bth)} \right|,$$
where $Z_4 = Z_0/(2 C^{1/2})$.

Therefore, $\mathcal{A}_{i, x, \bth} $  is a subset of 
\begin{align*}
\mathcal{B}_{i, x, \bth}  := &\left\lbrace \frac{X_i}{v_i^{1/2}(\bth)} < x, \frac{X_i}{v_i^{1/2}(\bth)} + (\rho^i + \rho^t) Z_4 \left| \frac{X_i}{v_i^{1/2}(\bth)} \right| \ge x \right\rbrace \\
& \cup \left\lbrace \frac{X_i}{v_i^{1/2}(\bth)} \ge x, \frac{X_i}{v_i^{1/2}(\bth)} - (\rho^i + \rho^t) Z_4 \left| \frac{X_i}{v_i^{1/2}(\bth)} \right| < x \right\rbrace \\
=& \left\lbrace \eta_i  <  x \frac{v_i^{1/2}(\bth)}{v_i^{1/2}(\bth_{0\vp})}, \eta_i  + (\rho^i + \rho^t) Z_4 |\eta_i| \ge x \frac{v_i^{1/2}(\bth)}{v_i^{1/2}(\bth_{0\vp})} \right\rbrace \\
& \cup \left\lbrace \eta_i  \ge x \frac{v_i^{1/2}(\bth)}{v_i^{1/2}(\bth_{0\vp})}, \eta_i  - (\rho^i + \rho^t)Z_4 |\eta_i| < x \frac{v_i^{1/2}(\bth)}{v_i^{1/2}(\bth_{0\vp})} \right\rbrace \\
=& \left\lbrace  x \frac{v_i^{1/2}(\bth)}{v_i^{1/2}(\bth_{0\vp})}-(\rho^i + \rho^t) Z_4 |\eta_i| \le \eta_i  <  x \frac{v_i^{1/2}(\bth)}{v_i^{1/2}(\bth_{0\vp})}\right\rbrace \\
& \cup \left\lbrace x \frac{v_i^{1/2}(\bth)}{v_i^{1/2}(\bth_{0\vp})} \le \eta_i< x \frac{v_i^{1/2}(\bth)}{v_i^{1/2}(\bth_{0\vp})} 
+(\rho^i + \rho^t)Z_4 |\eta_i| \right\rbrace.
\end{align*}
Consider r.v.s $X$ and $L \geq 0$ with $X$ independent of $\eta$. Then ${\mathrm P} [X<\eta<X+L] \leq \underset{y \in \R}{\sup} \{g(y)\} {\mathrm E} (L)$ where the p.d.f. $g$ is bounded. Consequently, 
%Let $y_i :=y_{i} (\bth, \bth_{0\vp})= x v_i^{1/2}(\bth)/v_i^{1/2}(\bth_{0\vp})$. Then 
%$$ \underset{\substack{x \in \R \\ \bth \in \bTheta}}{\sup} \, \sum_{i=\floor*{n^k}}^{n}  P(\mathcal{B}_{i, x, \bth} | \eta_i)  \leq   \sum_{i=\floor*{n^k}}^{n}  \underset{y_i \in \R}{\sup} \, g(y_i)  (\rho^i + \rho^t) Z_4 |\eta_i| \leq   \underset{y \in \R}{\sup} \, g(y) \sum_{i=\floor*{n^k}}^{n}  (\rho^i + \rho^t) Z_4 |\eta_i|.$$
%Thus, using that ${\mathrm E}(X) = {\mathrm E}[{\mathrm E}(X|Y)]$ and ${\mathrm E}(I(A)) = P(A)$ for any set $A$, we have 
%\begin{align*}
%&\underset{\substack{x \in \R \\ \bth \in \bTheta}}{\sup} \, {\mathrm E}\left[ \sum_{i=\floor*{n^k}}^{n} I(\mathcal{A}_{i, x, \bth}  ) \right]  =   \underset{\substack{x \in \R \\ \bth \in \bTheta}}{\sup} \, {\mathrm E} \left\lbrace  {\mathrm E}\left[ \sum_{i=\floor*{n^k}}^{n} I(\mathcal{A}_{i, x, \bth}  | \eta_i) \right]  \right\rbrace \\
%\leq &\underset{\substack{x \in \R \\ \bth \in \bTheta}}{\sup}   \sum_{i=\floor*{n^k}}^{n} {\mathrm E} \left\lbrace {\mathrm E}\left[ I(\mathcal{B}_{i, x, \bth}   | \eta_i) \right]  \right\rbrace \\
%=  & \underset{\substack{x \in \R \\ \bth \in \bTheta}}{\sup}  \sum_{i=\floor*{n^k}}^{n} {\mathrm E}\left[ P(\mathcal{B}_{i, x, \bth}  |\eta_i) \right]   \leq \underset{y \in \R}{\sup} \, g(y) \sum_{i=\floor*{n^k}}^{n} {\mathrm E}\left\lbrace  (\rho^i + \rho^t) Z_4 |\eta_i| \right\rbrace.
%\end{align*}
$$
\underset{\substack{x \in \R \\ \bth \in \bTheta}}{\sup} \, {\mathrm E}\left[ \sum_{i=\floor*{n^k}}^{n} I(\mathcal{A}_{i, x, \bth}  ) \right] \leq \underset{y \in \R}{\sup} \, g(y) \sum_{i=\floor*{n^k}}^{n} {\mathrm E}\left\lbrace  (\rho^i + \rho^t) Z_4 |\eta_i| \right\rbrace.
$$
Notice that since $\floor*{n^k} \leq t \leq n$,
$$\sum_{i=\floor*{n^k}}^{n} {\mathrm E}\left\lbrace \rho^t Z_4 |\eta_i| \right\rbrace \leq n \rho^{\floor*{n^k}} {\mathrm E}(Z_4) {\mathrm E} |\eta| = o(1)$$
due to $0<\rho<1$ and ${\mathrm E} |\eta|<\infty$. Hence, $\sum_{i=\floor*{n^k}}^{n} I(\mathcal{A}_{i, x, \bth})$ converges in mean to zero uniformly with respect to both $x$ and $\bth$, which entails $\sum_{i=\floor*{n^k}}^{n} I(\mathcal{A}_{i, x, \bth}  ) = o_{\rm P}(1)$ uniformly.

%Since convergence in probability is implied by convergence in mean according to Markov's inequality, to prove
%$\sum_{i=\floor*{n^k}}^{n} I(mathcal{A}_{i, x, \bth}  ) = o_{\rm P}(1)$, it only remains to prove for $\floor*{n^k} \leq t \leq n$
%\begin{equation}
%\sum_{i=\floor*{n^k}}^{n} (\rho^i + \rho^t) Z_4 |\eta_i| =  o_{\rm P}(1). 
%\end{equation}

%Notice that for $\delta_1, \delta_2>0$, we have $\underset{\floor*{n^k} \leq i \leq n}{\sup} |\eta_i| = o_{\rm P}(n^{1/\delta_1})$ since
%\begin{equation*}
%\begin{split}
%P\left[ n^{-1/\delta_1}  \underset{\floor*{n^k} \leq i \leq n}{\sup} |\eta_i| > \xi \right] &\leq  \sum_{i=\floor*{n^k}}^{n} P\left[ n^{-1/\delta_1}  |\eta_i| > \xi \right] 
%\leq n \frac{{\mathrm E}\left[n^{-(\delta_1 + \delta_2)/\delta_1} |\eta_i|^{\delta_1 + \delta_2} \right]}{\xi^{\delta_1 + \delta_2}} = o(1)
%\end{split}
%\end{equation*}
%under the assumption that $E|\eta|^{\delta_1 + \delta_2}<\infty$. Hence, for $\floor*{n^k} \leq t \leq n$, due to $0<\rho<1$,
%$$\sum_{i=\floor*{n^k}}^{n}  \rho^t Z_4 |\eta_i| \leq n \rho^{\floor*{n^k}} Z_4 \underset{\floor*{n^k} \leq i \leq n}{\sup} |\eta_i|  =  n^{1+1/\delta_1} \rho^{\floor*{n^k}} Z_4 o_{\rm P}(1)  = o_{\rm P}(1) o_{\rm P}(1) = o_{\rm P}(1).$$

\color{black}

\end{proof}

With all the results above, we can easily prove Proposition %\ref{mainRGARCH-Prop.asy.linear}
\ref{Prop.asy.linear} and the asymptotic result for the R-estimator as follows. \\

\textbf{Proof of Proposition %\ref{mainRGARCH-Prop.asy.linear}
\ref{Prop.asy.linear}.}
\begin{proof}

Combining (\ref{T1n}), (\ref{rank2}), (\ref{Tn21M}) and (\ref{rank3}), we get
\begin{equation}\label{asy.linear.app}
\h{{\mbf R}}_n(\bth_{n\vp}) - \M(\bth_{0\vp}) \bb - \Q_n(\bth_{0\vp}) - \N_n(\bth_{0\vp}) -    \J(\bth_{0\vp}) \bb/2  = u_{\rm P}(1),
\end{equation}
which, by letting $\bb = \0$, entails
\begin{equation*}
\h{{\mbf R}}_n(\bth_{0\vp}) = \Q_n(\bth_{0\vp}) + \N_n(\bth_{0\vp}) +  u_{\rm P}(1).
\end{equation*}
Hence, (%\ref{mainRGARCH-Prop.asy.lin1}
\ref{Prop.asy.lin1}) follows by recalling that $\M(\bth_{0\vp}) = \J(\bth_{0\vp}) \rho(\vp)/2$. 

The proof of (%\ref{mainRGARCH-Prop.asy.lin2}
\ref{Prop.asy.lin2}) follows directly from  (\ref{Nn.Norm}) and (\ref{muGGn1}).

\end{proof}

\textbf{Proof of Theorem %\ref{mainRGARCH-thm.asy}
\ref{thm.asy}.}

\begin{proof}
From the definition of $\h{\bth}_{n\vp}$ in %\ref{mainRGARCH-onestep.def}
(\ref{onestep.def}), (%\ref{mainRGARCH-Prop.asy.lin1}
\ref{Prop.asy.lin1}) and (%\ref{mainRGARCH-Prop.asy.lin2}
\ref{Prop.asy.lin2}) in Proposition %\ref{mainRGARCH-Prop.asy.linear}
\ref{Prop.asy.linear}, consistency of $\h{\bUpsilon}_n$
and the asymptotic discreteness of $ \bar{\bth}_n$ (which allows us to treat $n^{1/2}( \bar{\bth}^{(n)} -  \bth )$ as if it were a bounded constant: see   Lemma~4.4  in Kreiss~(1987)), we have
\begin{align*}
& n^{1/2}(\h{\bth}_{n\vp} - \bth_{0\vp}) \\
=& n^{1/2} \left\lbrace  \bar{\bth}_n - n^{-1/2} \left( \h{\bUpsilon}_n \right)^{-1} \h{\mbf R}_n ( \bar{\bth}_n) - \bth_{0\vp} \right\rbrace \\
=& n^{1/2} \left\lbrace  \bar{\bth}_n - n^{-1/2} \left( \h{\bUpsilon}_n \right)^{-1} \left[ \h{\mbf R}_n (\bth_{0\vp}) + (1/2 + \rho(\vp)/2 )\J(\bth_{0\vp}) n^{1/2} ( \bar{\bth}_n - \bth_{0\vp} )\right]- \bth_{0\vp} \right\rbrace + o_{\rm P}(1)\\
=& n^{1/2} \left\lbrace  \bar{\bth}_n - n^{-1/2} (1/2 + \rho(\vp)/2 )^{-1} (\J(\bth_{0\vp}))^{-1} \h{\mbf R}_n (\bth_{0\vp}) - ( \bar{\bth}_n - \bth_{0\vp} ) - \bth_{0\vp} \right\rbrace + o_{\rm P}(1)\\
=&  -(1/2 + \rho(\vp)/2 )^{-1} (\J(\bth_{0\vp}))^{-1} \h{\mbf R}_n(\bth_{0\vp}) + o_{\rm P}(1).
\end{align*}
In view of (%\ref{mainRGARCH-Prop.asy.lin2}
\ref{Prop.asy.lin2}), we have
\begin{equation*}
n^{1/2}(\h{\bth}_{n\vp} - \bth_{0\vp}) = -(1/2 + \rho(\vp)/2 )^{-1} (\J(\bth_{0\vp}))^{-1}  (\Q_n(\bth_{0\vp}) + \N_n(\bth_{0\vp})) + o_{\rm P}(1).
\end{equation*}

Now, it remains to obtain the asymptotic covariance matrix of $\sqrt{n} \left(\h{\bth}_{n\vp} - \bth_{0\vp}\right)$. 
Recall (\ref{Nn.Norm}) and (\ref{muGGn1}). Since the asymptotic covariance matrices of $\Q_n(\bth_{0\vp})$ and $\N_n(\bth_{0\vp})$ have been derived, it remains to obtain the covariance matrix $\text{Cov}(\Q_n(\bth_{0\vp}), \N_n(\bth_{0\vp}))$. Note that ${\mathrm E}[\Q_n(\bth_{0\vp}) \N_n^\prime(\bth_{0\vp})]$ equals
\begin{align}
&{\mathrm E}\left\lbrace \left[ \int_{0}^{1}  n^{-1/2} \s \frac{\d{\mbf v}_t(\bth_{0\vp})}{v_t(\bth_{0\vp})} \left[ \mu(G^{-1}(u)) -  \mu(\tilde{G}_n^{-1}(u))\right]  
d \varphi(u) \right] \right. \nonumber \\
&\qquad \times \left.  \left[ n^{-1/2} \s \frac{\d{\mbf v}_t^\prime(\bth_{0\vp})}{v_t(\bth_{0\vp})}\left[ 1-\eta_t \vp\left[ G(\eta_t) \right] \right] \right]\right\rbrace. \label{QNcov1}
\end{align}

Using (\ref{Qn1}) and $n^{-1} \s \d{\mbf v}_t(\bth_{0\vp})/v_t(\bth_{0\vp}) \rightarrow {\mathrm E}(\d{\mbf v}_1(\bth_{0\vp})/v_1(\bth_{0\vp}))$, as $n\rightarrow \infty$, (\ref{QNcov1}) has the same limit as
\begin{align}
&{\mathrm E}\left( \frac{\d{\mbf v}_1(\bth_{0\vp})}{v_1(\bth_{0\vp})}  \right) \nonumber  \\
& \times \underset{n\rightarrow \infty}{\lim} {\mathrm E}\left\lbrace \int_{0}^{1}  G^{-1}(u) \left\lbrace \frac{1}{n}   \sum_{i=1}^n \sum_{j=1}^n \left[I\{\eta_i \leq G^{-1}(u) \} - u\right]  \frac{\d{\mbf v}_j^\prime(\bth_{0\vp})}{v_j(\bth_{0\vp})}\left[ 1-\eta_j \vp\left[ G(\eta_j) \right] \right] \right\rbrace	d \varphi(u) \right\rbrace \nonumber  \\
&={\mathrm E}\left( \frac{\d{\mbf v}_1(\bth_{0\vp})}{v_1(\bth_{0\vp})}  \right) \nonumber  \\
& \quad \times \underset{n\rightarrow \infty}{\lim}   {\mathrm E}\left\lbrace \int_{0}^{1}  G^{-1}(u) \left\lbrace \frac{1}{n}   \sum_{i=1}^n \left[I\{\eta_i \leq G^{-1}(u) \} - u\right]  \frac{\d{\mbf v}_i^\prime(\bth_{0\vp})}{v_i(\bth_{0\vp})}\left[ 1-\eta_i \vp\left[ G(\eta_i) \right] \right] \right\rbrace d \varphi(u) \right\rbrace \nonumber \\
&={\mathrm E}\left( \frac{\d{\mbf v}_1(\bth_{0\vp})}{v_1(\bth_{0\vp})}  \right) {\mathrm E}\left( \frac{\d{\mbf v}_1^\prime(\bth_{0\vp})}{v_1(\bth_{0\vp})}  \right)    \int_{0}^{1}  G^{-1}(u) {\mathrm E}\left\lbrace \left[I\{\eta_1 \leq G^{-1}(u) \} - u\right]  \left[ 1-\eta_1 \vp\left[ G(\eta_1) \right] \right] \right\rbrace d \varphi(u) \nonumber \\
&={\mathrm E}\left( \frac{\d{\mbf v}_1(\bth_{0\vp})}{v_1(\bth_{0\vp})}  \right) {\mathrm E}\left( \frac{\d{\mbf v}_1^\prime(\bth_{0\vp})}{v_1(\bth_{0\vp})}  \right)    \int_{0}^{1}  G^{-1}(u) {\mathrm E}\left\lbrace I\{\eta_1 \leq G^{-1}(u) \}   \left[ 1-\eta_1 \vp\left[ G(\eta_1) \right] \right] \right\rbrace d \varphi(u), \nonumber 
\end{align}
where the first equality is due to independence of $\eta_i$ and $\eta_j$ for $i\neq j$, independence of $v_j$ and $\eta_j$, and Assumption (A1). The second equality is due to independence of $v_i$ and $\eta_i$. The last equality is due to Assumption (A1).

Recall the definition of  $\lambda(\vp)$ in (%\ref{mainRGARCH-lam.def}
\ref{lam.def}), which can also be written as
\begin{equation*}
\lambda(\vp)=  \int_{0}^{1}  G^{-1}(u) {\mathrm E}\left\lbrace I\{\eta_1 \leq G^{-1}(u) \}   \left[ 1-\eta_1 \vp\left[ G(\eta_1) \right] \right] \right\rbrace d \varphi(u).
\end{equation*}
We then have
\begin{equation*}
\underset{n\rightarrow \infty}{\lim} \text{Cov}(\Q_n(\bth_{0\vp}), \N_n(\bth_{0\vp})) = {\mathrm E}\left( \frac{\d{\mbf v}_1(\bth_{0\vp})}{v_1(\bth_{0\vp})}  \right) {\mathrm E}\left( \frac{\d{\mbf v}_1^\prime(\bth_{0\vp})}{v_1(\bth_{0\vp})}  \right) \lambda(\vp).
\end{equation*}

Hence, by recalling (\ref{muGGn1}) and in view of (%\ref{mainRGARCH-asy}
\ref{asy}), the asymptotic covariance matrix of $\sqrt{n} \left(\h{\bth}_{n\vp} - \bth_{0\vp}\right)$ is 
\begin{align*}
&\quad (\J(\bth_{0\vp}))^{-1} \frac{{\mathrm E}\left( \frac{\d{\mbf v}_1(\bth_{0\vp})}{v_1(\bth_{0\vp})}  \right) {\mathrm E}\left( \frac{\d{\mbf v}_1^\prime(\bth_{0\vp})}{v_1(\bth_{0\vp})}  \right) \text{Var}(Z) + \J(\bth_{0\vp}) \sigma^2(\vp) + 2 {\mathrm E}\left( \frac{\d{\mbf v}_1(\bth_{0\vp})}{v_1(\bth_{0\vp})}  \right) {\mathrm E}\left( \frac{\d{\mbf v}_1^\prime(\bth_{0\vp})}{v_1(\bth_{0\vp})}  \right) \lambda(\vp)}{(1/2 + \rho(\vp)/2)^2}   (\J(\bth_{0\vp}))^{-1} \\
&=  (\J(\bth_{0\vp}))^{-1} \frac{\left[ 4\gamma(\vp)  + 8  \lambda(\vp) \right]  {\mathrm E}\left( \frac{\d{\mbf v}_1(\bth_{0\vp})}{v_1(\bth_{0\vp})}  \right) {\mathrm E}\left( \frac{\d{\mbf v}_1^\prime(\bth_{0\vp})}{v_1(\bth_{0\vp})}  \right) + 4 \sigma^2(\vp) \J(\bth_{0\vp}) }{(1 + \rho(\vp))^2}  (\J(\bth_{0\vp}))^{-1}.
\end{align*}

\end{proof}

\noindent \textbf{Proof of Theorem %\ref{mainRGARCH-thm.asybth0}
\ref{thm.asybth0}.}

\begin{proof}
From (\ref{hatcvp}), 
$$
\hat{c}_{n\vp}= \left(1 -  \sum_{j=1}^q  \hat{\beta}_{nj} \right)^{-1} \left( \hat{\omega}_{n\vp}/\overline{X_n^2}  + \sum_{i = 1}^p  \hat{\alpha}_{n\vp i} \right)
= \left(1 - \sum_{j=1}^q  \beta_{0j} \right)^{-1} \left( c_{\vp} \omega_0/{\rm E}(X_t^2)  +c_{\vp} \sum_{i = 1}^p  \alpha_{0i} \right)+o_{\rm P}(1). 
$$
Substituting ${\rm E}(X_t^2)=\omega_0/\{1- \sum_{i = 1}^p \alpha_{0i} - \sum_{j=1}^q \beta_{0j}\}$, this is equal to $c_{\vp}$.  

To prove \eqref{asybth0}, we define a $(1+p+q) \times  (1+p+q)$  diagonal matrix
$$\A_{\vp} := \text{diag}(\underbrace{c_\vp^{-1}, ..., c_\vp^{-1}}_{1+p} \underbrace{1, ..., 1}_{q}).$$ 
Then we have $\bth_0 = \A_{\vp} \bth_{0\vp}$ and ${\bth}_n =  \A_{\vp} {\bth}_{n\vp}.$
Using the forms of $\{c_j (\bth); j \ge 0\}$ in (3.1) of Berkes et al.~(2003),  we have
$$v_t(\bth_{0\vp}) = c_\vp v_t(\bth_0) \quad \text{and} \quad \d{\mbf v}_t(\bth_{0\vp}) = c_\vp \A_{\vp} \d{\mbf v}_t(\bth_0)$$ 
which entail
$$\J(\bth_{0\vp}) = \A_{\vp} \J(\bth_0)  \A_{\vp}, \quad \Q_n(\bth_{0\vp}) =  \A_{\vp}  \Q_n(\bth_0), \quad \N_n(\bth_{0\vp}) = \A_{\vp}  \N_n(\bth_0).$$
Hence \eqref{asybth0} follows from \eqref{asy} and consistency of $\hat{c}_{n\vp}$.

%
%Since $\bth_{0\vp} = \A^{-1} \bth_0$ with $\J_0 := {\mathrm E}[ \d{\mbf v}_t(\bth_0) \d{\mbf v}_t^{'}(\bth_0)/v_t^2(\bth_0)]$,
%we have
%$$
%\frac{\d{\mbf v}_t(\bth_{0\vp})}{v_t(\bth_{0\vp})} = \A  \frac{\d{\mbf v}_t(\bth_0)}{v_t(\bth_0)}, \quad \J = \A  \J_0 \A.
%$$
%Thus Theorem \ref{thm.asy} implies that as $n\rightarrow \infty$, $n^{1/2} \left(\A \h{\bth}_{n\vp} - \bth_0 \right)$ converges to the normal distribution with mean $\0$ and covariance matrix
%$$\C_1 := \J_0^{-1} \frac{\left[ 4\gamma(\vp)  + 8  \lambda(\vp) \right]  {\mathrm E}(\d{\mbf v}_1 (\bth_0)/v_1(\bth_0)) {\mathrm E}\left( \d{\mbf v}_1^\prime(\bth_0)/v_1 (\bth_0) \right) + 4 \sigma^2(\vp) \J_0 }{(1 + \rho(\vp))^2} \J_0^{-1}.$$
% 

\end{proof}

\section{R-estimators in GJR models and applications}
\label{sec.GJR}
Glosten et al.~(1993) proposed the GJR~($p, q$) model for the asymmetric volatility observed in many financial dataset
 exhibiting asymmetry property. The GJR~($p, q$) model is defined by 
$$X_t = \sigma_t \epsilon_t$$
where
$$\sigma_t^2 = \omega_0 + \sum_{i = 1}^p \left[\alpha_{0i} + \gamma_{0i} I(X_{t-i} <0)\right] X_{t-i}^2 + \sum_{j = 1}^q  \beta_{0j} \sigma_{t-j}^2, \quad t \in \ZZ,$$
with $\omega_0, \alpha_{0i}, \gamma_{0i}, \beta_{0j} > 0, \forall i, j.$ 
Since $\sigma_t^2$ is linear in parameters,   
%Notice that due to the similar formula of the GJR model to the GARCH model, the central sequence corresponds to the MLE of the GJR model also takes the same form as (%\ref{mainRGARCH-MLE}
%\ref{MLE}). Therefore, in this section, we consider applying the R-estimator for the GJR model by using the same rank-based central sequence as in (%\ref{mainRGARCH-def.R} \ref{def.R}). 
we define the R-estimators for the GJR model using the same rank-based central sequence as in (%\ref{mainRGARCH-def.R}
\ref{def.R}). See also Iqbal and Mukherjee~(2010) for the extension of M-estimators from the GARCH model to the GJR model.
We do not prove any asymptotic theory for the R-estimators of the GJR model but present here empirical analysis using the same algorithm as in %(\ref{mainRGARCH-eq:formula}\ref{eq:formula}) 
Algorithm~\ref{algRestGARCH}
to compute the R-estimators. The following extensive simulation study, similar to the GARCH case, demonstrates the superior performance of the R-estimators compared to the QMLE that is often used in this model. We also carry out simulation with increasing sample sizes to show the consistency of the R-estimators.
%\textbf{Simulation to compare the R-estimators and QMLE for the GJR~($1, 1$) model.} 
Three types of R-estimators and the QMLE are compared below under various error distributions. We run simulations with the sample size $n = 1000$, number of replications $R= 500$ and true parameter
$$\bth_0 = (3.45\times 10^{-4}, 0.0658, 0.0843, 0.8182)^\prime,$$
which is motivated by the estimate in Tsay~(2010) for the IBM stock monthly returns from 1926 to 2003. The estimates of the bias and MSE of the QMLE and R-estimators and those of the ARE of the R-estimators wrt the QMLE are reported in Table \ref{GJR.simulation}. 

We remark that the results are consistent with those in Table %\ref{mainRGARCH-simulation1}
\ref{simulation1}: the vdW score still dominates the QMLE uniformly; the optimal scores under the DE and logistic distributions are also the sign and Wilcoxon, respectively. It is worth noting that under $t(3)$ distribution, the R-estimators are much more efficient than the QMLE for the parameter $\gamma.$

\begin{table}[!htbp]
\caption{The estimates of the bias, MSE and ARE of  the R-estimators (sign, Wilcoxon and vdW) and the QMLE for the GJR~(1, 1) model under various error distributions (sample size $n = 1000$; $R= 500$ replications).}\vspace{-3mm}
\label{GJR.simulation} 
\begin{center}
\scriptsize  
\begin{tabular}{c c c c c c c c c c}\hline 
&\multicolumn{4}{c}{\textbf{Bias}} &&\multicolumn{4}{c}{\textbf{MSE and ARE}} \\ \cline{2-5} \cline{7-10} & $\omega$ &$\alpha$& $\gamma$ & $\beta$&  &$\omega$ &$\alpha$ & $\gamma$ &$\beta$ \\ \hline
\textbf{Normal} &&&&&&&&&  \\ 
QMLE	 & 8.70$\times 10^{-5}$	&-2.03$\times 10^{-3}$&	7.47$\times 10^{-3}$&	-2.03$\times 10^{-2}$ && 4.17$\times 10^{-8}$& 7.63$\times 10^{-4}$&	1.53$\times 10^{-3}$	&3.80$\times 10^{-3}$ \\ 
Sign&  9.01$\times 10^{-5}$&	-1.20$\times 10^{-3}$&	8.43$\times 10^{-3}$&	-2.00$\times 10^{-2}$&&		4.73$\times 10^{-8}$&	9.40$\times 10^{-4}$&	1.92$\times 10^{-3}$&	4.30$\times 10^{-3}$\\
&&&&&&(0.88)&	(0.81)&	(0.80) &	(0.88)\\
Wilcoxon&9.35$\times 10^{-5}$&	-8.15$\times 10^{-4}$&	8.65$\times 10^{-3}$&	-2.00$\times 10^{-2}$&&		4.96$\times 10^{-8}$&	8.72$\times 10^{-4}$&	1.76$\times 10^{-3}$&	4.24$\times 10^{-3}$	 \\ 
&&&&&&(0.84)&	(0.87)&	(0.87) &	(0.90)\\
vdW	&8.72$\times 10^{-5}$&	-1.61$\times 10^{-3}$&	7.59$\times 10^{-3}$&	-2.01$\times 10^{-2}$&&		4.24$\times 10^{-8}$&	7.72$\times 10^{-4}$&	1.56$\times 10^{-3}$&	3.82$\times 10^{-3}$ \\
&&&&&&(0.98) &	(0.99) & 	(0.98) &	(0.99) \\ \hline

\textbf{DE} &&&&&&& && \\ 
QMLE	& 7.07$\times 10^{-5}$&	-1.28$\times 10^{-3}$&	1.13$\times 10^{-2}$&	-1.87$\times 10^{-2}$&&		4.66$\times 10^{-8}$&	1.23$\times 10^{-3}$&	3.19$\times 10^{-3}$&	4.72$\times 10^{-3}$ \\
Sign&	4.91$\times 10^{-5}$&	-2.26$\times 10^{-3}$&	6.99$\times 10^{-3}$&	-1.62$\times 10^{-2}$&&		3.44$\times 10^{-8}$&	9.58$\times 10^{-4}$	&2.36$\times 10^{-3}$&	4.10$\times 10^{-3}$ \\
&&&&&&(1.35) &	(1.28) &	(1.35) &	(1.15) \\
Wilcoxon&	5.06$\times 10^{-5}$&	-2.23$\times 10^{-3}$&	7.25$\times 10^{-3}$&	-1.61$\times 10^{-2}$	&&	3.45$\times 10^{-8}$&	9.74$\times 10^{-4}$&	2.41$\times 10^{-3}$	& 4.06$\times 10^{-3}$ \\
&&&&&&(1.35) &	(1.26) &	(1.32) &	(1.16) \\
vdW	&4.98$\times 10^{-5}$&	-3.35$\times 10^{-3}$&	6.73$\times 10^{-3}$&	-1.71$\times 10^{-2}$&&		3.54$\times 10^{-8}$&	1.02$\times 10^{-3}$&	2.50$\times 10^{-3}$&	4.23$\times 10^{-3}$ \\
&&&&&&(1.32) &(1.20) &	(1.27) &	(1.12)  \\ \hline

\textbf{Logistic} &&&&&&& \\ 
QMLE& 8.01$\times 10^{-5}$&	7.88$\times 10^{-4}$&	8.85$\times 10^{-3}$&	-1.62$\times 10^{-2}$&&		3.86$\times 10^{-8}$&	1.07$\times 10^{-3}$&	2.40$\times 10^{-3}$&	3.46$\times 10^{-3}$\\
Sign&6.06$\times 10^{-5}$&	-2.59$\times 10^{-4}$&	5.54$\times 10^{-3}$&	-1.49$\times 10^{-2}$&&		3.26$\times 10^{-8}$&	8.97$\times 10^{-4}$&	1.96$\times 10^{-3}$&	3.36$\times 10^{-3}$	\\
&&&&&&(1.18)&	(1.19)&	(1.23)&	(1.03)\\
Wilcoxon&5.97$\times 10^{-5}$&	-5.22$\times 10^{-4}$	&5.38$\times 10^{-3}$&	-1.44$\times 10^{-2}$	&&	3.07$\times 10^{-8}$&	8.76$\times 10^{-4}$&	1.93$\times 10^{-3}$&	3.18$\times 10^{-3}$	\\
&&&&&&(1.26)&	(1.22)&	(1.24)&	(1.09) \\
vdW	& 6.27$\times 10^{-5}$	&-1.18$\times 10^{-3}$&	5.18$\times 10^{-3}$&	-1.56$\times 10^{-2}$	&&	3.25$\times 10^{-8}$&	9.26$\times 10^{-4}$&	2.04$\times 10^{-3}$&	3.35$\times 10^{-3}$ \\
&&&&&&(1.19)&	(1.15)&	(1.17)&	(1.03) \\  \hline

${\boldsymbol {t(3)}}$ &&&&&&&\\ 
QMLE&9.91$\times 10^{-5}$&	-1.00$\times 10^{-3}$&	9.21$\times 10^{-2}$&	-6.45$\times 10^{-2}$	&&	1.14$\times 10^{-7}$&	4.38$\times 10^{-3}$&	1.18$\times 10^{-1}$&	2.31$\times 10^{-2}$	\\
Sign&	5.68$\times 10^{-5}$&	4.23$\times 10^{-4}$&	2.71$\times 10^{-2}$&	-2.77$\times 10^{-2}$&&		3.78$\times 10^{-8}$&	1.35$\times 10^{-3}$&	4.69$\times 10^{-3}$&	6.33$\times 10^{-3}$ \\
&&&&&&(3.02)&	(3.24)&	(25.19)&	(3.65) \\
Wilcoxon&5.69$\times 10^{-5}$	&3.94$\times 10^{-5}$&	2.74$\times 10^{-2}$&	-2.85$\times 10^{-2}$&&		3.93$\times 10^{-8}$&	1.40$\times 10^{-3}$&	4.80$\times 10^{-3}$&	6.62$\times 10^{-3}$	\\
&&&&&&(2.91)&	(3.12)&	(24.57)&	(3.49)	\\
vdW&6.12$\times 10^{-5}$&	-1.61$\times 10^{-3}$&	3.43$\times 10^{-2}$&	-3.71$\times 10^{-2}$	&&	5.13$\times 10^{-8}$&	1.93$\times 10^{-3}$&	7.45$\times 10^{-3}$&	9.58$\times 10^{-3}$	\\	
&&&&&&(2.23)&	(2.27)&	(15.84)&	(2.41)	\\ \hline   
\end{tabular}
\end{center}
\end{table}

\textbf{Simulation under different sample size.} We next investigate behaviour of the R-estimators by carrying out simulations with different sample sizes. The number of replications and true parameter are the same as those used for Table \ref{GJR.simulation} and the error distribution is normal. The estimates of the bias and MSE of the R-estimators for the GJR~($1, 1$) model are shown in 
Table \ref{GJR.simulation2}. In general, for all R-estimators, both the bias and MSE decrease when the sample size increases
from $n = 500$ to $n = 5000$. This tends to reflect that the R-estimators are consistent estimators of $\bth_{0}$ for the GJR~($1, 1$) model.

\begin{table}
\caption{The bias, MSE of the R-estimators (sign, Wilcoxon and vdW) for the GJR~(1, 1) model under normal error distributions with different sample sizes ($R = 500$ replications).} \vspace{-3mm}
\label{GJR.simulation2}
\begin{center}
\scriptsize  
\begin{tabular}{c c c c c c c c c c}\hline 
&\multicolumn{4}{c}{\textbf{Bias}} &&\multicolumn{4}{c}{\textbf{MSE}} \\ \cline{2-5} \cline{7-10} & $\omega$ &$\alpha$& $\gamma$ & $\beta$&  &$\omega$ &$\alpha$ & $\gamma$ &$\beta$ \\ \hline
\textbf{Sign} &&&&&&&&&  \\ 
$n = 500$	 &1.78$\times 10^{-4}$	&2.42$\times 10^{-3}$&	1.88$\times 10^{-2}$&	-4.84$\times 10^{-2}$&&		1.43$\times 10^{-7}$&	1.64$\times 10^{-3}$&	3.72$\times 10^{-3}$&	1.33$\times 10^{-2}$ \\ 
$n = 1000$&9.01$\times 10^{-5}$	&-1.20$\times 10^{-3}$&	8.43$\times 10^{-3}$&	-2.00$\times 10^{-2}$&&		4.73$\times 10^{-8}$&	9.40$\times 10^{-4}$&	1.92$\times 10^{-3}$&	4.30$\times 10^{-3}$\\
$n = 3000$& 2.76$\times 10^{-5}$&	-6.82$\times 10^{-4}$&	1.97$\times 10^{-3}$&	-4.47$\times 10^{-3}$&&		8.75$\times 10^{-9}$&	3.05$\times 10^{-4}$&	6.11$\times 10^{-4}$&	8.73$\times 10^{-4}$\\ 
$n = 5000$& 2.07$\times 10^{-5}$&	-4.43$\times 10^{-4}$&	2.05$\times 10^{-3}$&	-3.43$\times 10^{-3}$&&		4.57$\times 10^{-9}$&	1.70$\times 10^{-4}$&	3.68$\times 10^{-4}$&	4.62$\times 10^{-4}$ \\ \hline

\textbf{Wilcoxon} &&&&&&&&&  \\ 
$n = 500$	 & 1.77$\times 10^{-4}$	&2.52$\times 10^{-3}$&	1.88$\times 10^{-2}$&	-4.75$\times 10^{-2}$&&		1.37$\times 10^{-7}$&	1.52$\times 10^{-3}$&	3.45$\times 10^{-3}$&	1.26$\times 10^{-2}$\\ 
$n = 1000$& 9.35$\times 10^{-5}$&	-8.15$\times 10^{-4}$&	8.65$\times 10^{-3}$&	-2.00$\times 10^{-2}$&&		4.96$\times 10^{-8}$&	8.72$\times 10^{-4}$&	1.76$\times 10^{-3}$&	4.24$\times 10^{-3}$\\
$n = 3000$& 3.01$\times 10^{-5}$&	-2.94$\times 10^{-5}$&	2.68$\times 10^{-3}$&	-4.30$\times 10^{-3}$&&		8.18$\times 10^{-9}$&	2.82$\times 10^{-4}$&	5.60$\times 10^{-4}$&	7.85$\times 10^{-4}$\\ 
$n = 5000$& 2.45$\times 10^{-5}$&	1.52$\times 10^{-4}$&	2.82$\times 10^{-3}$&	-3.54$\times 10^{-3}$&&		4.50$\times 10^{-9}$&	1.63$\times 10^{-4}$&	3.55$\times 10^{-4}$&	4.29$\times 10^{-4}$\\ \hline 

\textbf{vdW} &&&&&&&&&  \\ 
$n = 500$	 &1.67$\times 10^{-4}$&	1.42$\times 10^{-3}$&	1.61$\times 10^{-2}$&	-4.84$\times 10^{-2}$&&		1.31$\times 10^{-7}$	&1.44$\times 10^{-3}$&	3.03$\times 10^{-3}$&	1.27$\times 10^{-2}$ \\ 
$n = 1000$& 8.72$\times 10^{-5}$&	-1.61$\times 10^{-3}$&	7.59$\times 10^{-3}$&	-2.01$\times 10^{-2}$&&		4.24$\times 10^{-8}$&	7.72$\times 10^{-4}$&	1.56$\times 10^{-3}$	&3.82$\times 10^{-3}$\\
$n = 3000$& 2.88$\times 10^{-5}$&	-1.72$\times 10^{-4}$&	1.60$\times 10^{-3}$&	-4.59$\times 10^{-3}$&&		7.42$\times 10^{-9}$&	2.61$\times 10^{-4}$&	4.90$\times 10^{-4}$&	7.19$\times 10^{-4}$\\ 
$n = 5000$& 2.42$\times 10^{-5}$&	9.50$\times 10^{-6}$&	2.42$\times 10^{-3}$&	-4.02$\times 10^{-3}$&&		4.38$\times 10^{-9}$&	1.49$\times 10^{-4}$&	3.28$\times 10^{-4}$&	4.26$\times 10^{-4}$\\ \hline
\end{tabular}
\end{center}
\end{table}

\end{document}